\def \C{\mathbb{C}}
\def \Z{\mathbb{Z}}
\def \R{\mathbb{R}}
\def \Q{\mathbb{Q}}
\def \A{\mathbb{A}}
\def \V{\mathcal{V}}
\def \K{\mathcal{K}}
\def \O{\mathcal{O}}
\def \Tm{\mathcal{T}}
\def \ee{\mathbf{e}}
\def \uu{\mathbf{u}}
\def \vv{\mathbf{v}}
\def \ww{\mathbf{w}}
\def \xx{\mathbf{x}}
\def \yy{\mathbf{y}}
\def \trop{\operatorname{trop}}
\def \strop{\operatorname{strop}}
\def \Log{\operatorname{Log}}
\def \sLog{\operatorname{sLog}}
\def \ord{\operatorname{ord}}
\def \Hom{\operatorname{Hom}}
\def \SL{\operatorname{SL}}
\def \GL{\operatorname{GL}}
\def \SU{\operatorname{SU}}
\def \stab{\operatorname{Stab}}
\def \Div{\operatorname{div}}
\def \ker{\operatorname{ker}}
\def \diag{\operatorname{diag}}
\def \Sym{\operatorname{Sym}}
\newcommand*\diff{\mathop{}\!\mathrm{d}}
\newcommand{\set}[1]{\rleft\{ {#1} \rright\}}
\newcommand{\with}{\,\colon\,}
\newcommand{\rleft}{\mathopen{}\mathclose\bgroup\left}
\newcommand{\rright}{\aftergroup\egroup\right}
\theoremstyle{plain}
\newtheorem{Th}{Theorem}[section]
\newtheorem{Lem}[Th]{Lemma}
\newtheorem{Prop}[Th]{Proposition}
\newtheorem{Cor}[Th]{Corollary}
\newtheorem{Question}[Th]{Question}
\newtheorem{THM}{Theorem}
\newtheorem{CORO}[THM]{Corollary}
\newtheorem{CONJEC}[THM]{Conjecture}
\theoremstyle{definition}
\newtheorem{Ex}[Th]{Example}
\newtheorem{Def}[Th]{Definition}
\newtheorem{Rem}[Th]{Remark}
\begin{document}

\title{Spherical amoebae and a spherical logarithm map}
\author{Victor Batyrev} 
\author{Megumi Harada}
\author{Johannes Hofscheier}
\author{Kiumars Kaveh}
\date{\today}

\begin{abstract}
Let $G$ be a connected reductive algebraic group over $\C$ with a maximal compact subgroup $K$. Let $G/H$ be a (quasi-affine) spherical homogeneous space. In the first part of the paper, following Akhiezer's definition of spherical functions, we introduce a $K$-invariant map 
$\sLog_{\Gamma, t}: G/H \to \R^s$ which depends on a choice of a finite set $\Gamma$ of dominant weights and $s = |\Gamma|$.
We call $\sLog_{\Gamma, t}$ a \textbf{spherical logarithm map}. We show that when $\Gamma$ generates the highest weight monoid of $G/H$, the image of the spherical logarithm map parametrizes $K$-orbits in $G/H$. This idea of using the spherical functions to understand the geometry of the space $K \backslash G/H$ of $K$-orbits in $G/H$ can be viewed as a generalization of the classical Cartan decomposition. In the second part of the paper, we define the \textbf{spherical amoeba} (depending on $\Gamma$ and $t$) of a subvariety $Y$ of $G/H$ as $\sLog_{\Gamma, t}(Y)$, and we ask for conditions under which the image of a subvariety $Y \subset G/H$ under $\sLog_{\Gamma, t}$ converges, as $t \to 0$, in the sense of Kuratowski to its \textbf{spherical tropicalization} as defined by Tevelev and Vogiannou. We prove a partial result toward answering this question, which shows in particular that the valuation cone is always contained in the Kuratowski limit of the spherical amoebae of $G/H$. We also show that the limit of the spherical amoebae of $G/H$ is equal to its valuation cone in a number of interesting examples, including when $G/H$ is horospherical, and in the case when $G/H$ is the space of hyperbolic triangles.   
\end{abstract}

\maketitle

\bigskip
{\bf Comments are welcomed.}

\tableofcontents

\section*{Introduction}
Let $Y \subset (\C^*)^n$ be an algebraic subvariety of the $n$-dimensional complex algebraic torus. 
The (classical) \emph{amoebae} $\mathcal{A}_t(Y)$ of $Y$, for a varying real parameter $t$, have been much studied in the past few decades in  several contexts, including in tropical and non-Archimedean geometry. The amoeba $\mathcal{A}_t(Y)$ is defined to be the image of $Y$ under the logarithm map $\Log_t: (\C^*)^n \to \R^n$ defined for fixed $t > 0$ by 
$$\Log_t(x_1, \ldots, x_n) := (\log_t|x_1|, \ldots, \log_t|x_n|).$$
Note that the map $\Log_t$ is invariant under the action of the \emph{compact} torus $(S^1)^n \subset (\C^*)^n$. In fact, with respect to the isomorphism $(x_1, \ldots, x_n) \mapsto ((\theta_1, \ldots, \theta_n), (r_1, \ldots, r_n))$ where $x_i = r_i e^{i\theta_i}$ between $(\C^*)^n$ and $(S^1)^n \times \R_{>0}^n$, the logarithm map $\Log_t$ can be seen to be the projection onto the second component(s), followed by the usual logarithm function $r_i \mapsto \log_t r_i$. 

For an algebraic subvariety $Y$ in $(\C^*)^n$, it is well-known that the amoeba $\mathcal{A}_t(Y)$ stretches to infinity in several directions, also known as its {\it tentacles}. These tentacles indicate the directions in which $Y$ approaches infinity, or in other words, they parametrize the orbits at infinity in its tropical compactification. The precise statement is as follows (cf. \cite{Jonsson}): as $t \to 0$, $\mathcal{A}_t(Y)$ converges to the tropical variety $\trop(Y)$ as subsets in $\R^n$, where convergence is defined here in the sense of Kuratowksi (see Definition~\ref{def-Kuratowski}). The theory of amoebae and tropicalization connects classical complex geometry with tropical geometry, which studies piecewise-linear objects in Euclidean space, and the study of these interactions is an active research area. 

In this paper, we take initial steps toward generalizing the above to the \emph{non-abelian} setting. More precisely, we choose to view the complex torus $(\C^*)^n$ as a special case of a \textit{spherical homogeneous space} in which the group is abelian. {We then see whether, or how, the abelian story (as briefly outlined above) might be generalized for non-abelian spherical homogeneous spaces. Before further details, we first briefly recall the setting. } Let $G$ be a complex connected reductive algebraic group. A homogeneous space $G/H$ of $G$ is called {\it spherical} if a Borel subgroup $B$ of $G$ has a dense orbit, with respect to the usual action by left multiplication. (See Section~\ref{sec-prelim} for details.) Thus, the complex torus $(\C^*)^n$ is the special case in which $G = (\C^*)^n$ is the torus itself, $H = \{1\}$ is the trivial subgroup, and the Borel subgroup $B$ is again the torus $(\C^*)^n$ itself. 
 
{One of our fundamental motivations for this paper is the desire to better understand the $K$-orbit space of a spherical homogeneous space $G/H$ as above, where $K$ is a maximal compact subgroup of $G$. Indeed, in the abelian case, the logarithm map $\Log_t$ can be viewed as the quotient map of $(\C^*)^n$ by its maximal compact subgroup $(S^1)^n$, and we can view $\R^n_{>0}$ as the parameter space of the $(S^1)^n$-orbits in $(\C^*)^n$. Similarly we may ask whether there exists a non-abelian, or spherical, analogue of the classical logarithm map on $G/H$ which allows us to naturally parametrize the $K$-orbit space $K \backslash G/H$ of $G/H$. This problem can be viewed as a generalization of the classical Cartan and polar decompositions in Lie theory.  However, as far as we know, there is no general and concrete description of the $K$-orbit space $K\backslash G/H$ known. (For some important results in this direction see \cite{Knop et al}.) A main motivation for the present paper is the belief that there should be a natural isomorphism between $K \backslash G/H$ and the real closure of the \textbf{valuation cone} $\V_{G/H}$ (see Section \ref{subsec-background} for the definition) of an arbitrary spherical variety $G/H$ (over $\C$). In the abelian case, the valuation cone is precisely $\R^n$, so the logarithm $\Log_t$ provides this natural isomorphism.  We think of this paper as taking some steps forward in the search of such a natural isomorphism (see also Conjecture \ref{conj-Batyrev}). (We also note that related ideas have appeared in \cite{Feu}.) However, it should be noted that the results we obtain are of a different nature than a direct generalization of the torus case. In contrast to the abelian situation in which $\Log_t$ is an isomorphism $K \backslash G/H \cong \overline{\V}_{G/H}$, what we see in our results is that, \emph{in the limit as $t \to 0$}, the image of our spherical logarithm map $\sLog_t$ (to be defined precisely below) converges in an appropriate sense to the valuation cone. In particular, for fixed $t$, our $\sLog_t$ does not provide the desired canonical isomorphism.   } 
 
 In addition to the above, we were motivated by the notion of a \textbf{spherical tropicalization map} associated to a spherical homogeneous space $G/H$. Spherical tropical geometry was introduced  and developed by Tevelev and Vogiannou \cite{Vogiannou} and by the fourth author and Manon \cite{Kaveh-Manon}. 
We believe that there should be a theory of spherical amoebae for $G/H$ which correspond to the classical amoebae in the abelian case, such that the ``spherical amoebae converge to the spherical tropicalization'' in an appropriate sense (see also Remark \ref{rem-KM}).

With these motivations in mind, we now describe more precisely the contributions of this article. Let $G/H$ be a spherical homogeneous space as above. For technical reasons we additionally assume that $G/H$ is quasi-affine.  (This is not terribly restrictive, as we explain in Remark~\ref{remark: not severe}.) 
Let $\Lambda^+_{G/H}$ denote the semigroup of $B$-highest-weights of irreducible $G$-representations appearing in the coordinate ring $\C[G/H]$, considered as a $G$-module in the natural way.
Following some rather little-known work of Akhiezer \cite{Akhiezer-rk}, we define a $K$-invariant \textbf{spherical function} $\phi_\lambda$ on $G/H$ associated to a $B$-weight $\lambda$ for $\lambda \in \Lambda^+_{G/H}$. We normalize these spherical functions by setting $\phi_\lambda(eH) = 1$.  
Our first observations concerning Akhiezer's spherical functions is the following; see Propositions~\ref{prop: algebra generators} and~\ref{prop: separate K orbits} for precise statements. 

\begin{THM}   \label{th-intro:separate K-orbits and algebra generators}
Suppose $\lambda_1, \ldots, \lambda_s \in \Lambda_{G/H}^+$ generate $\Lambda^+_{G/H}$ as a semigroup. 
Then the corresponding spherical functions $\phi_{\lambda_1}, \ldots, \phi_{\lambda_s}$ generate the algebra of $K$-invariant functions on $G/H$. It follows that the image of the spherical logarithm map corresponding the choice of $\{\lambda_1, \ldots, \lambda_s\}$, parametrizes the $K$-orbit space $K \backslash G / H$.
Moreover, the spherical functions separate $K$-orbits. More precisely, for any two distinct (hence disjoint) orbits $K$-orbits $O_1$ and $O_2$ in $G/H$, there exists some $\lambda \in \Lambda^+_{G/H}$ such that $\phi_\lambda(O_1) \neq \phi_\lambda(O_2)$. 
\end{THM}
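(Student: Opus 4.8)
The plan is to understand the spherical functions $\phi_\lambda$ concretely enough to manipulate their products, and then to deduce all three assertions from the Peter--Weyl-type decomposition of $\C[G/H]$. First I would recall Akhiezer's construction: for $\lambda\in\Lambda^+_{G/H}$ there is a (one-dimensional, by sphericity) space of $H$-invariant vectors in the dual of the irreducible $G$-module $V_\lambda$ occurring in $\C[G/H]$, hence a distinguished matrix coefficient on $G/H$; averaging the squared absolute value (or the appropriate Hermitian pairing) over the maximal compact $K$ and normalizing at $eH$ produces the $K$-invariant function $\phi_\lambda$. The key structural fact I would extract is multiplicativity up to lower-order terms: since $V_\lambda\otimes V_\mu$ contains $V_{\lambda+\mu}$ with multiplicity one as its ``top'' constituent (Cartan component) and all other constituents have highest weight $\prec\lambda+\mu$, the product $\phi_\lambda\phi_\mu$ equals $\phi_{\lambda+\mu}$ plus a combination of $\phi_\nu$ with $\nu$ strictly lower in the dominance order and $\nu\in\Lambda^+_{G/H}$. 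This is the engine behind the ``algebra generators'' claim.

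Second, for the generation statement I would argue as follows. The algebra $\C[G/H]^K$ of $K$-invariant regular functions decomposes, by averaging the $G$-isotypic decomposition $\C[G/H]=\bigoplus_{\lambda\in\Lambda^+_{G/H}}V_\lambda$ over $K$, into a direct sum of lines $\C\cdot\phi_\lambda$ indexed by $\lambda\in\Lambda^+_{G/H}$ (here sphericity guarantees each $V_\lambda$ contributes exactly a one-dimensional space of $K$-invariants, spanned by $\phi_\lambda$ after normalization). Given generators $\lambda_1,\dots,\lambda_s$ of the semigroup $\Lambda^+_{G/H}$, I would show by induction on the dominance order that every $\phi_\lambda$ lies in the subalgebra generated by $\phi_{\lambda_1},\dots,\phi_{\lambda_s}$: write $\lambda=\lambda_{i_1}+\cdots+\lambda_{i_k}$, form the product $\phi_{\lambda_{i_1}}\cdots\phi_{\lambda_{i_k}}$, which by the multiplicativity-up-to-lower-terms fact equals a nonzero multiple of $\phi_\lambda$ plus $\phi_\nu$'s with $\nu\prec\lambda$, and apply the inductive hypothesis to those. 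Hence $\C[G/H]^K=\C[\phi_{\lambda_1},\dots,\phi_{\lambda_s}]$.

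Third, the statement about $\sLog_{\Gamma,t}$ parametrizing $K\backslash G/H$ follows formally: $\sLog_{\Gamma,t}=(\log_t\phi_{\lambda_1},\dots,\log_t\phi_{\lambda_s})$ (up to the precise normalization fixed earlier), so two points of $G/H$ have the same image under $\sLog_{\Gamma,t}$ iff they agree on all $\phi_{\lambda_i}$, hence on all of $\C[G/H]^K$ by the previous paragraph; and since $G/H$ is quasi-affine, $\C[G/H]^K$ separates the closed $K$-orbits, and in fact (using that $K$ is compact, so all $K$-orbits are closed) separates all $K$-orbits. This simultaneously gives the final ``separate $K$-orbits'' assertion, once one knows $\C[G/H]^K$ separates $K$-orbits: for disjoint compact orbits $O_1,O_2$ pick any $K$-invariant regular function taking different values on them (such exists since $G/H$ quasi-affine embeds $K$-equivariantly in a $G$-module, whose $K$-invariant polynomials separate closed $K$-orbits by classical invariant theory for the compact group $K$), and expand it in the $\phi_\lambda$-basis to find a single $\lambda$ with $\phi_\lambda(O_1)\neq\phi_\lambda(O_2)$.

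The main obstacle I anticipate is establishing the multiplicativity-up-to-lower-order-terms property of the $\phi_\lambda$ cleanly, i.e. verifying that the Cartan component $V_{\lambda+\mu}\subset V_\lambda\otimes V_\mu$ really does contribute $\phi_{\lambda+\mu}$ (with nonzero coefficient) in the product $\phi_\lambda\phi_\mu$ and that the remaining isotypic pieces, after $K$-averaging, land in $\bigoplus_{\nu\prec\lambda+\mu}\C\phi_\nu$ — this requires care with the Hermitian pairings used to define $\phi_\lambda$ and with the fact that $K$-averaging a product of matrix coefficients picks out precisely the $K$-invariant part of each isotypic component. A secondary technical point is the precise sense in which $\C[G/H]^K$ separates $K$-orbits on a quasi-affine (not affine) variety; this I would handle by passing to the affine closure $\Spec\C[G/H]$, noting $K$-orbits of $G/H$ remain separated there because $K$ is reductive and the relevant functions are already regular on $G/H$.
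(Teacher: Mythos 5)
The overall architecture of your plan (a Peter--Weyl-type decomposition plus a ``multiplicativity up to lower-order terms'' induction) does match the paper's, but there is a genuine conceptual gap in the second paragraph that would make the argument collapse as written.

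You identify the ambient algebra as $\C[G/H]^K$ and claim that $K$-averaging the isotypic decomposition $\C[G/H]=\bigoplus_\lambda V_\lambda$ produces a direct sum of lines $\C\,\phi_\lambda$. Neither piece is correct. The algebra $\C[G/H]^K$ of $K$-invariant \emph{regular} (holomorphic) functions is trivial: a $K$-invariant regular function is $G$-invariant by Zariski-density of $K$ in $G$, hence constant by transitivity, so $\C[G/H]^K=\C$. Correspondingly, $K$-averaging $V_\lambda$ gives $V_\lambda^K=V_\lambda^G=0$ for $\lambda\neq0$ --- it does not yield $\phi_\lambda$. The function $\phi_\lambda(x)=\sum_i|f_{\lambda,i}(x)|^2$ is not holomorphic; it lives in $\rleft(\C[G/H]\cdot\overline{\C[G/H]}\rright)^K$, and the correct statement (Proposition \ref{prop-sph-functions}, which the paper obtains from Schur's lemma applied to $(V_\lambda\otimes\overline V_\mu)^K\cong\Hom_K(V_\mu,V_\lambda)$) is that $\{\phi_\lambda\}_{\lambda\in\Lambda^+_{G/H}}$ is a basis of $\rleft(\C[G/H]\cdot\overline{\C[G/H]}\rright)^K$. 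All of your subsequent reasoning (the generation claim, the separation claim, the expansion ``in the $\phi_\lambda$-basis'') implicitly relies on having the right ambient algebra, so this needs to be repaired throughout. The theorem's informal phrase ``algebra of $K$-invariant functions'' is pointing at this antiholomorphic-enriched algebra, not at $\C[G/H]^K$.

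Two smaller remarks. First, your induction ``on the dominance order'' is not quite legitimate as stated, since that order is only a partial order and a priori admits infinite descending chains in the reductive case; the paper resolves this by choosing an irrational covector $\xi$ to build a genuine well-ordering compatible with addition (and treats the central torus factor separately via the multiplicativity in \eqref{equ-phi-lambda-gamma}). Second, your separation argument via classical compact-group invariant theory is a genuinely different route from the paper's, which instead applies Urysohn's lemma and the Stone approximation theorem followed by $K$-averaging; your approach should also work once the ambient algebra is corrected, since $K$-invariant \emph{real} polynomials on the ambient $\C^m\cong\R^{2m}$ do separate compact $K$-orbits and restrict into $\rleft(\C[G/H]\cdot\overline{\C[G/H]}\rright)^K$, but you must be explicit that you are working with real polynomials (i.e., including antiholomorphic variables), not elements of $\C[G/H]$.
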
 

Motivated by the above, let 
$\Gamma = \{\lambda_1, \ldots, \lambda_s\} \subset \Lambda^+_{G/H}$. We define $\Phi_\Gamma: G/H \to \R_{\geq 0}^s$ by: 
$$\Phi_\Gamma(x) = (\phi_{\lambda_1}(x), \ldots, \phi_{\lambda_s}(x))$$
(details are in Section~\ref{sec: spherical functions}). For a fixed $t>0$, we then define the \textbf{spherical logarithm map} (depending on $\Gamma$ and with base $t$) to be the map $\sLog_{\Gamma, t} := \log_t(\Phi_\Gamma): G/H \to \R^s$ (see Definition~\ref{def-Phi}). 
\footnote{Note that the spherical tropicalization map defined by the fourth author and Manon in \cite{Kaveh-Manon}
naturally takes values in a $\Q$-vector space. However, since our spherical logarithm map uses a logarithm, we need to work with $\R$ coefficients.} 
If we assume that the above subset $\Gamma$ generates $\Lambda^+_{G/H}$ as a semigroup, then from Theorem~\ref{th-intro:separate K-orbits and algebra generators} it follows that we obtain a parametrization of $K \backslash G/H$, i.e., the points in the image $\sLog_{\Gamma, t}(G/H)$ parameterize $K$-orbits in $G/H$. This is a positive step in the broader program of understanding $K \backslash G/H$. We will have more to say about the relation between the image of $\sLog_{\Gamma,t}$ and the valuation cone $\V_{G/H}$ below.

The spherical functions used to define $\sLog_{\Gamma,t}$ have somewhat subtle properties. The next result, below, gives an inequality involving a product of two spherical functions. The essential point here is that spherical functions are \emph{not} necessarily multiplicative: given two $B$-weights $\lambda$ and $\mu$, in general we can have $\phi_{\lambda} \phi_{\mu} \neq \phi_{\lambda+\mu}$ (see Example~\ref{ex-sph-functions-not-multiplicative}). However, it is worth emphasizing that when $G/H$ is horospherical the multiplicativity indeed holds (Lemma \ref{lem-multiplication and tails}).  When $G/H$ is not horospherical, our next result gives some information about the relation between the product $\phi_\lambda \phi_\mu$ and the other spherical functions. To state it, 
we need the following notation. For a $B$-weight $\lambda$, let $V_\lambda$ denote the irreducible $G$-representation with highest weight $\lambda$. Given a $G$-module $W$ we say $V_\lambda$ appears in $W$ if the decomposition of $W$ into irreducible $G$-representations contains a copy of $V_\lambda$. Here we view $\C[G/H]$ as a $G$-module. Given $V_\lambda$ and $V_\mu$ both appearing in $\C[G/H]$, we let $\langle V_\lambda \cdot V_\mu \rangle$ denote the $\C$-span of the set of products $f \cdot g \in \C[G/H]$ where $f \in V_\lambda$ and $g \in V_\mu$. We have the following; for a precise statement see Proposition
\ref{prop-phi-lambda-inequalities}. 

\begin{THM} \label{intro-prop-phi-lambda-inequalities}
Suppose $V_\lambda$ and $V_\mu$ appear in $\C[G/H]$ and suppose $V_\gamma$ appears in $\langle V_\lambda \cdot V_\mu \rangle  \subseteq \C[G/H]$. Then there exists a constant $c>0$ such that 
for all $x \in G/H$ we have 
$$\phi_\lambda (x) \phi_\mu(x) \geq c \phi_\gamma(x).$$
In particular, since $V_{\lambda+\mu}$ always appears in $\langle V_\lambda \cdot V_\mu \rangle$, we see that there is a constant $c>0$ such that $\phi_\lambda(x) \phi_\mu(x) \geq c \phi_{\lambda+\mu}(x)$ for all $x \in G/H$. 
\end{THM}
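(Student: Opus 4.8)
The plan is to reduce the inequality to an estimate on norms of evaluation functionals and to obtain that estimate from the multiplication map $V_\lambda \otimes V_\mu \to \C[G/H]$.

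First I would rephrase everything in terms of the unnormalized spherical functions. Recall that, up to the normalization $\phi_\nu(eH) = 1$, we have $\phi_\nu(x) = \sum_i |f_i(x)|^2$ for an orthonormal basis $\{f_i\}$ of $V_\nu \subseteq \C[G/H]$ with respect to a $K$-invariant Hermitian inner product on $V_\nu$, and a different such inner product only rescales this function by a positive constant. Set $\Psi_\nu(x) := \sum_i |f_i(x)|^2$; one checks that $\Psi_\nu(x) = \|\mathrm{ev}_x|_{V_\nu}\|^2$, the squared norm of the evaluation functional $f \mapsto f(x)$ on $V_\nu$ computed with the dual inner product on $V_\nu^*$. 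Since $V_\nu$ is a nonzero $G$-submodule of $\C[G/H]$, it cannot vanish at $eH$ (otherwise it vanishes on $G\cdot eH = G/H$), so $\Psi_\nu(eH) > 0$ and $\phi_\nu = \Psi_\nu/\Psi_\nu(eH)$. Thus it suffices to produce a constant $\kappa > 0$ depending only on $\lambda$ and $\mu$ (not on $x$) with $\Psi_\lambda(x)\,\Psi_\mu(x) \ge \kappa^2\,\Psi_\gamma(x)$ for all $x$; the desired constant is then $c = \kappa^2\,\Psi_\gamma(eH)/\bigl(\Psi_\lambda(eH)\Psi_\mu(eH)\bigr)$.

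Next I would fix $K$-invariant inner products on $V_\lambda$ and $V_\mu$, give $V_\lambda \otimes V_\mu$ the tensor-product inner product (again $K$-invariant), and fix a $K$-invariant inner product on $W := \langle V_\lambda \cdot V_\mu\rangle$, so that the inner product it induces on $V_\gamma \subseteq W$ is $K$-invariant and hence a legitimate choice for computing $\phi_\gamma$. The key identity is
\[
\mathrm{ev}_x \circ m \;=\; \bigl(\mathrm{ev}_x|_{V_\lambda}\bigr) \otimes \bigl(\mathrm{ev}_x|_{V_\mu}\bigr) \quad\text{in } (V_\lambda \otimes V_\mu)^* = V_\lambda^* \otimes V_\mu^*,
\]
where $m \colon V_\lambda \otimes V_\mu \twoheadrightarrow W$, $f\otimes g \mapsto fg$, is the ($G$-equivariant) multiplication map; together with multiplicativity of the norm under tensor products this gives $\|\mathrm{ev}_x \circ m\|^2 = \Psi_\lambda(x)\,\Psi_\mu(x)$. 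Now choose a complement $U$ to $\ker m$ in $V_\lambda \otimes V_\mu$ (e.g.\ its orthogonal complement, which is a $G$-submodule), so that $m|_U \colon U \xrightarrow{\ \sim\ } W$ is an isomorphism, and set $\kappa := \|(m|_U)^{-1}\|_{\mathrm{op}}^{-1} > 0$. For $w \in W$ write $w = m(u)$ with $u \in U$, $\|u\| \le \kappa^{-1}\|w\|$; then $|\mathrm{ev}_x(w)| = |(\mathrm{ev}_x\circ m)(u)| \le \|\mathrm{ev}_x\circ m\|\,\|u\| \le \kappa^{-1}\sqrt{\Psi_\lambda(x)\Psi_\mu(x)}\,\|w\|$, so $\|\mathrm{ev}_x|_W\| \le \kappa^{-1}\sqrt{\Psi_\lambda(x)\Psi_\mu(x)}$. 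Restricting a functional to the subspace $V_\gamma \subseteq W$ only decreases its norm, so $\Psi_\gamma(x) = \|\mathrm{ev}_x|_{V_\gamma}\|^2 \le \kappa^{-2}\,\Psi_\lambda(x)\,\Psi_\mu(x)$, which is what we wanted. The ``in particular'' is the special case $\gamma = \lambda+\mu$: the product of a highest-weight vector of $V_\lambda$ with one of $V_\mu$ is a highest-weight vector of weight $\lambda+\mu$ in $\C[G/H]$, and it is nonzero since $B$-semiinvariants in $\C[G/H]$ are nonvanishing on the dense $B$-orbit, so the $G$-submodule it generates contains $V_{\lambda+\mu}$; hence $V_{\lambda+\mu}$ appears in $\langle V_\lambda\cdot V_\mu\rangle$.

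I expect the main obstacle to be not conceptual but rather the bookkeeping around the inner products. The two points needing care are: (i) that $\phi_\gamma$ does not depend on the choice of $K$-invariant inner product on $V_\gamma$ once normalized, which is what permits computing $\Psi_\gamma$ with the inner product restricted from $W$ and legitimizes the ``restriction decreases norm'' step; and (ii) the identity $\mathrm{ev}_x \circ m = (\mathrm{ev}_x|_{V_\lambda}) \otimes (\mathrm{ev}_x|_{V_\mu})$ together with the tensor-multiplicativity of the norm, the one place where the ring structure of $\C[G/H]$ enters. The uniformity of $c$ in $x$ is then automatic, since $\kappa$ is extracted from the single fixed linear map $m|_U$ and does not see $x$.
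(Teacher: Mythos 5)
Your proof is correct. The paper's argument is closely related but more hands-on: it picks orthonormal bases $\{f_{\lambda,i}\}$, $\{f_{\mu,j}\}$, $\{f_{\gamma,k}\}$, writes $f_{\gamma,k} = \sum_{i,j} a_{ijk}\, f_{\lambda,i} f_{\mu,j}$ (a coordinate-level choice of preimage under the multiplication map $m$), and bounds $\sqrt{\sum_k |f_{\gamma,k}(x)|^2}$ using the triangle and Cauchy--Schwarz inequalities, producing a constant of the form $a\sqrt{\dim V_\lambda \dim V_\mu}$ with $a = \max_{i,j}\sqrt{\sum_k|a_{ijk}|^2}$. Your reformulation isolates the same mechanism more conceptually: the only $x$-dependence sits in $\mathrm{ev}_x$; the factorization $\mathrm{ev}_x \circ m = (\mathrm{ev}_x|_{V_\lambda}) \otimes (\mathrm{ev}_x|_{V_\mu})$ together with tensor-multiplicativity of the dual Hilbert norm gives $\|\mathrm{ev}_x\circ m\|^2 = \Psi_\lambda(x)\Psi_\mu(x)$, and the fixed geometry of $m$ supplies the constant via $\|(m|_U)^{-1}\|_{\mathrm{op}}$. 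This is a genuine streamlining of the bookkeeping and in general yields a sharper constant than the paper's $\ell^\infty$-times-dimension estimate, though both prove the same qualitative statement. Your treatment of the normalization, of the $K$-invariance of the various inner products, and of the ``in particular'' claim (nonvanishing of nonzero $B$-semiinvariants on the open $B$-orbit) are all correct. One small remark: you do not actually need $U$ to be a $G$-submodule — any linear complement of $\ker m$ would do — though the parenthetical observation that the orthogonal complement is $G$-stable is also right, since a $K$-stable complex subspace of a rational $G$-module is automatically $G$-stable.
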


As we have observed, our definition of $\sLog_{\Gamma,t}$ depends on the choice of $\Gamma$. In the special case when $G/H$ is horospherical and thus the spherical functions are multiplicative, we can define the spherical logarithm map $\sLog_t$ \emph{canonically} (i.e. independently of the choice of $\Gamma \subset \Lambda^+_{G/H}$) by 
\[
\sLog_{\Gamma, t}(x) := \log_t(\Phi(x)) \, \, \textup{ for } x \in G/H
\]
where $\Phi(x)$ is defined by $\langle \Phi(x), \lambda \rangle = \phi_\lambda(x)$, $\forall \lambda \in  \Lambda^+_{G/H}$. 
 In this case, the image of the map $\sLog_t$ lands naturally in the $r$-dimensional vector space $\mathcal{Q}_{G/H} \otimes \R := \Hom(\Lambda^+_{G/H}, \R)$, where $r$ is the rank of the spherical variety $G/H$. This is also the vector space in which the valuation cone $\V_{G/H}$ naturally lies (see Section~\ref{sec-prelim}, as well as Remark \ref{rem-slog-horosph}). 
As we mentioned above, this paper was partly motivated by the search for a natural isomorphism between $K \backslash G/H$ and the valuation cone. Since $\sLog_{\Gamma,t}(G/H)$ parametrizes $K \backslash G/H$, it is natural to ask for a relationship between it and $\V_{G/H}$. A choice of generating set $\Gamma \subset \Lambda^+_{G/H}$ gives a linear embedding of the valuation cone $\V_{G/H}$ into $\R^s$, the codomain of $\sLog_{\Gamma,t}$ in the general case, given by:
$$v \mapsto (\langle v, \lambda_1\rangle, \ldots, \langle v, \lambda_s\rangle), \quad v \in \V_{G/H}.$$ 
When comparing $\V_{G/H}$ with $\sLog_{\Gamma,t}(G/H)$ in what follows, we identify $\V_{G/H}$ with its image in $\R^s$ under the above embedding. 

We now turn to the related question of spherical amoebae.   Let $Y \subset G/H$ be a subvariety of $G/H$. We define \textbf{the spherical amoeba of $Y$} to be the image $\sLog_{\Gamma,t}(Y)$ of $Y$ under the spherical logarithm. One of our motivations for this manuscript was to ask: do the spherical amoebae approach (in a suitable sense) the spherical tropicalization of $Y$ as $t \to 0$? Of course, as in the classical (abelian) case, one needs to be precise about what the word ``approach'' means. Following \cite{Jonsson}, here we take it to mean the convergence of subsets \emph{in the sense of Kuratowski} (Definition~\ref{def-Kuratowski}). Moreover, in the non-abelian case, the image of the spherical homogeneous space $G/H$ under the spherical tropicalization map is not necessarily the entire codomain; instead, the image is the valuation cone $\V_{G/H}$. This is in contrast to the classical case $G=(\C^*)^n$, in which 
the image is the entire vector space $\R^n$. We ask the following (cf. Question~\ref{question: sph-amoeba-approach-sph-trop}): 
\begin{equation*}\label{Convergence of amoebae}
\begin{minipage}{0.7\linewidth}
\emph{(1) Under what conditions do the images $\sLog_{\Gamma,t}(G/H)$ approach the valuation cone $\V_{G/H}$ as $t \to 0$ in the sense of Kuratowski convergence of subsets? }
\emph{(2) For $Y \subseteq G/H$ a subvariety, under what conditions do the spherical amoebae $\sLog_{\Gamma,t}(Y)$ approach $\strop(Y)$ as $t \to 0$ in the sense of Kuratowski convergence of subsets?}
\end{minipage}
\end{equation*}

{In Remark \ref{rem-image-sph-log-conv-val-cone} we sketch a proof strategy to give a positive answer to Part (1) of the above question; moreover, as we discuss further below, in Section~\ref{sec-exs} we show that in many examples, the Kuratowski limit of $\sLog_{\Gamma,t}(G/H)$ is indeed
the valuation cone $\V_{G/H}$. We expect that a proof of Part (2) to be more difficult, and (as in the torus case \cite{Jonsson}), an answer might require tools from non-Archimedean geometry.   Nevertheless we conjecture the following. 

\begin{CONJEC}
If $G/H$ is horospherical, and $Y \subseteq G/H$ is a subvariety, then the statement in the Question (2) (as stated above) holds, i.e., the spherical amoebae $\sLog_{\Gamma, t}(Y)$ approaches $\strop(Y)$ in the sense of Kuratowski as $t \to 0$. 
\end{CONJEC}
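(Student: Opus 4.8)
The plan is to reduce the conjecture to Jonsson's theorem on amoebae \cite{Jonsson} by means of a representation-theoretic embedding, together with the description of spherical tropicalization for horospherical spaces from \cite{Vogiannou, Kaveh-Manon}. The starting point (Step~1) is to express Akhiezer's spherical function as a squared Hermitian norm: for a $K$-invariant Hermitian inner product on the (multiplicity-one) copy of $V_\lambda$ in $\C[G/H]$, one has $\phi_\lambda(x) = \|\mathrm{ev}_x\|^2 / \|\mathrm{ev}_{eH}\|^2$, where $\mathrm{ev}_x \colon V_\lambda \to \C$ is evaluation at $x$ (a functional that never vanishes, its zero locus being a proper $G$-stable closed subset of $G/H$); both the multiplicativity in the horospherical case (Lemma~\ref{lem-multiplication and tails}) and the inequality of Theorem~\ref{intro-prop-phi-lambda-inequalities} reflect the behaviour of these norms under the multiplication maps $V_\lambda \otimes V_\mu \to \langle V_\lambda \cdot V_\mu \rangle$. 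Assuming, as in Theorem~\ref{th-intro:separate K-orbits and algebra generators}, that $\Gamma = \{\lambda_1, \dots, \lambda_s\}$ generates $\Lambda^+_{G/H}$, the algebra $\C[G/H]$ is generated by $\bigoplus_j V_{\lambda_j}$, so one has a $G$-equivariant locally closed immersion $\iota \colon G/H \hookrightarrow W := \bigoplus_j V_{\lambda_j}^*$ whose $j$-th component sends $x$ to $\mathrm{ev}_x|_{V_{\lambda_j}}$. Choosing $K$-invariant inner products and orthonormal coordinates $(x_{jk})_k$ on each $V_{\lambda_j}^*$, one gets $\phi_{\lambda_j}(x) = \sum_k |x_{jk}(\iota(x))|^2$ up to a positive constant, and hence, up to a fixed ($t$-independent) linear change of the target and an additive error that is uniformly small as $t \to 0$,
\[
\sLog_{\Gamma, t}(y) \;=\; \Big(\, \min_k \log_t |x_{jk}(\iota(y))| \,\Big)_{j=1,\dots,s}, \qquad y \in G/H.
\]

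Step~2 is the reduction to a classical amoeba. Put $\widetilde Y := \iota(Y) \subseteq W \cong \C^N$. By Step~1 the sets $\sLog_{\Gamma, t}(Y)$ and $\nu\big(\mathcal{A}_t^{\mathrm{ext}}(\widetilde Y)\big)$ have the same Kuratowski limit, where $\mathcal{A}_t^{\mathrm{ext}}(\widetilde Y)$ is the amoeba of $\widetilde Y$ in a suitable toric compactification of $(\C^\times)^N$ (so as to record the intersections of $\widetilde Y$ with the coordinate subspaces of $W$) and $\nu$ is the ``block-minimum'' map $u \mapsto (\min_k u_{jk})_j$, suitably extended. By Jonsson's theorem \cite{Jonsson}, in its form for subvarieties of affine space (i.e.\ via extended tropicalization), $\mathcal{A}_t^{\mathrm{ext}}(\widetilde Y)$ converges in the Kuratowski sense to $\trop(\widetilde Y)$. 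It then suffices to establish (i) $\nu\big(\trop(\widetilde Y)\big) = \strop(Y)$, and (ii) that $\nu$ commutes with the Kuratowski limit in this situation.

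For (i) (Step~3) one uses that $\strop(Y)$ is the closure of the set of $G$-invariant valuations $v_\gamma$ arising from $\C((t))$-points $\gamma$ of $Y$ \cite{Vogiannou, Kaveh-Manon}. A point of $\trop(\widetilde Y)$ comes from such a $\gamma$, with $j$-th coordinate block $(\ord_t(x_{jk}(\iota\gamma)))_k$, so $\nu$ of it is $\min_k \ord_t(x_{jk}(\iota\gamma)) = \min_{f \in V_{\lambda_j}} \ord_t(f(\gamma)) = v_\gamma(f_{\lambda_j}) = \langle v_\gamma, \lambda_j \rangle$; here $v_\gamma$ is additive in $\lambda$ precisely because $G/H$ is horospherical, so $v_\gamma \in \mathcal{Q}_{G/H} \otimes \R = \Hom(\Lambda^+_{G/H}, \R)$. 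Under the linear embedding of $\mathcal{Q}_{G/H} \otimes \R$ into $\R^s$ fixed by $\Gamma$, the map $v \mapsto (\langle v, \lambda_j \rangle)_j$ is exactly $\nu$ restricted to $\trop(\widetilde Y)$, which gives $\nu(\trop(\widetilde Y)) = \strop(Y)$. Morally, a $G$-invariant valuation ``sees'' all of $V_\lambda$ through the minimum of orders, just as $\sLog_{\Gamma, t}$ sees all of $V_\lambda$ through the Hermitian norm --- the non-abelian shadow of the fact that, in a torus, $\phi_\lambda$ and $|f_\lambda|$ only record which monomial of $V_\lambda$ dominates.

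The hard part will be (ii): $\nu$ is continuous but not proper, so $\nu$ of the Kuratowski limit need not equal the Kuratowski limit of the $\nu$-images, and one must exclude stray tentacles of $\sLog_{\Gamma, t}(Y)$ that escape along directions in which a coordinate block of $\mathcal{A}_t^{\mathrm{ext}}(\widetilde Y)$ runs off while its block-minimum stays bounded. The inclusion $\limsup_{t \to 0} \sLog_{\Gamma, t}(Y) \subseteq \strop(Y)$ should follow by upgrading the argument behind the already-established containment of the valuation cone $\V_{G/H}$ in the Kuratowski limit for $G/H$ itself (cf.\ Remark~\ref{rem-image-sph-log-conv-val-cone}), using the uniform estimate of Step~1 and the behaviour of $\iota$ --- together with a toroidal completion of $G/H$ --- near the boundary. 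The reverse inclusion $\strop(Y) \subseteq \liminf_{t \to 0} \sLog_{\Gamma, t}(Y)$ requires producing, for each $v \in \strop(Y)$, points $y_t \in Y$ with $\sLog_{\Gamma, t}(y_t) \to v$; by Step~3 this reduces to a spherical analogue of Kapranov's lifting theorem --- every $v \in \strop(Y)$ is realized by a $\C((t))$-point of $Y$, which in the horospherical case is essentially available from \cite{Vogiannou} --- after which one specializes the parameter to a small positive real value as in the torus case. I expect that carrying out these two inclusions carefully enough to legitimately interchange $\nu$ with the Kuratowski limit --- in other words, controlling $\mathcal{A}_t^{\mathrm{ext}}(\widetilde Y)$ near the coordinate strata of $W$ and the effect of the block-minimum map there --- will be the technically delicate core of the proof.
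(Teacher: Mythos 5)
The statement you are addressing is a \emph{Conjecture} in the paper, not a theorem: the authors explicitly say just beforehand that they expect Question~(2) to be hard and likely to need tools from non-Archimedean geometry, and they supply no proof. So there is nothing in the paper to compare your argument against; it must stand on its own, and as written it does not yet constitute a proof.

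Your overall strategy is natural and the set-up is sound: embedding $G/H$ into $W = \bigoplus_j V_{\lambda_j}^*$, writing $\log_t \phi_{\lambda_j}$ as a block-minimum of classical $\log_t$-coordinates up to a uniformly $o(1)$ error, and identifying $\nu(\trop(\widetilde Y))$ with $\strop(Y)$ are all correct. (One small misattribution in Step~3: the additivity of $\lambda \mapsto v_\gamma(f_\lambda)$ is automatic for any spherical $G/H$ --- $B$-eigenvectors multiply and $v_\gamma$ is a valuation --- and is not where horosphericality enters. What horosphericality actually buys you is that $\V_{G/H}$ is the whole space $\mathcal{Q}_{G/H}\otimes\R$ and that the $\phi_\lambda$ are multiplicative, so that there is no tail-cone correction creeping into the passage from the classical amoeba of $\widetilde Y$ to the spherical amoeba.) The genuine gaps, both of which you flag yourself, are: (a) you invoke Jonsson's theorem in an extended form for subvarieties of all of $\C^N$ rather than of $(\C^*)^N$; this is not what \cite{Jonsson} proves, and the extended-tropicalization version for a chosen toric compactification of $W$, together with the fact that $\iota(Y)$ may meet coordinate strata, must be established, not asserted; and (b) the commutation of the block-minimum map $\nu$ with the Kuratowski limit --- the containment $\limsup_{t\to 0}\sLog_{\Gamma,t}(Y) \subseteq \strop(Y)$ in particular --- is precisely the non-properness issue you identify as the delicate core. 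Until (a) and (b) are supplied, this remains a proof outline, and the conjecture remains open.
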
 
}

We also note that progress on (1) above would partially answer the broad motivating question of finding a relation between $K \backslash G/H$ and $\V_{G/H}$.  In this paper, 
we prove some preliminary results which address the question (1) above. Firstly, we answer the ``curve case'', as follows. 
The precise statement is Theorem~\ref{th-phi-approach-inv-valuation}. 

\begin{THM}   \label{th-intro:phi-approach-inv-valuation}
Let $v_\gamma \in \V_{G/H}$ be the $G$-invariant valuation associated to a formal Laurent curve $\gamma$ in $G/H$, convergent for small enough $t$. Let $\lambda \in \Lambda^+_{G/H}$ and let $\phi_\lambda: G/H \to \R_{>0}$ be the corresponding spherical function. Then for any highest weight vector $f_\lambda \in V_\lambda \subset \C[G/H]$ we have
\begin{equation*}  
\lim_{t \to 0} \log_t(\phi_\lambda(\gamma(t))) = 2 v_\gamma (f_\lambda).
\end{equation*}
\end{THM}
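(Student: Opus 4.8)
First I would recall what the spherical function $\phi_\lambda$ looks like concretely in terms of the $G$-representation $V_\lambda$. Following Akhiezer's construction, if $f_\lambda$ is a $B$-highest weight vector in $V_\lambda \subset \C[G/H]$, then $V_\lambda$ carries a $K$-invariant Hermitian inner product $\langle\,\cdot\,,\cdot\,\rangle$, and the spherical function is (up to the normalization $\phi_\lambda(eH)=1$) of the form
\[
\phi_\lambda(gH) = \frac{\langle g \cdot f_\lambda, g \cdot f_\lambda\rangle}{\langle f_\lambda, f_\lambda\rangle}
\]
— that is, $\phi_\lambda$ is, up to a positive constant, $gH \mapsto \|g\cdot f_\lambda\|^2$, where the norm is the $K$-invariant one on $V_\lambda$. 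The key structural facts I would extract are: (i) $\phi_\lambda$ is a finite sum $\sum_i |h_i(gH)|^2$ where the $h_i$ run over a basis of the $G$-submodule of $\C[G/H]$ generated by $f_\lambda$ (i.e. a copy of $V_\lambda$), with one of the $h_i$ being $f_\lambda$ itself; and (ii) all the $h_i$ lie in $\C[G/H]$, so each $t \mapsto h_i(\gamma(t))$ is a formal Laurent series in $t$ convergent for small $t$.

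Next I would compute $\log_t \phi_\lambda(\gamma(t))$ as $t \to 0$ using this sum-of-squares description. Since $\phi_\lambda(\gamma(t)) = c\sum_i |h_i(\gamma(t))|^2$ with $c>0$, and each $|h_i(\gamma(t))|^2$ behaves like a positive constant times $|t|^{2\,\ord_t h_i(\gamma(t))}$ as $t \to 0$ (for those $h_i$ not vanishing identically on $\gamma$), the dominant term in the sum as $t\to 0$ is the one with the \emph{smallest} order of vanishing, i.e.
\[
\lim_{t\to 0}\log_t \phi_\lambda(\gamma(t)) = 2\min_i \ord_t\big(h_i(\gamma(t))\big) = 2\min_{h \in V_\lambda \subset \C[G/H]} \ord_t\big(h(\gamma(t))\big),
\]
where the last equality holds because the minimum over a spanning set of a finite-dimensional space equals the minimum over the whole space (the order can only drop on a proper subspace, but it is achieved generically). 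Now I would invoke the definition of the valuation $v_\gamma$ attached to the curve $\gamma$: for $f \in \C[G/H]$, $v_\gamma(f) = \ord_t f(\gamma(t))$, and the restriction of $v_\gamma$ to the isotypic component $V_\lambda$ (which is a single irreducible, hence the $B$-eigenvalue determines things) satisfies $v_\gamma|_{V_\lambda} = v_\gamma(f_\lambda)$ — this is precisely the statement that $G$-invariant valuations are determined by their values on $B$-highest weight vectors, and more elementarily that on an irreducible $G$-module the minimum of $v_\gamma$ over all vectors is attained at (indeed equals the value at) the highest weight vector, because the highest weight line is the "generic" direction under the $B$-action while lower weight vectors can only have larger order. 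Combining, $\min_{h\in V_\lambda}\ord_t(h(\gamma(t))) = v_\gamma(f_\lambda)$, which gives the claimed formula.

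The main obstacle, I expect, is making precise and rigorous the interchange "minimum over a basis $=$ minimum over the module $= v_\gamma(f_\lambda)$." Two subtleties need care: first, I must rule out the degenerate possibility that some $h_i$ vanishes identically along $\gamma$ without spoiling the argument — this is fine because $f_\lambda$ itself does not vanish identically on the dense $B$-orbit and $\gamma$ has dense image in the relevant sense, or one argues that $\phi_\lambda(\gamma(t))>0$ for $t\ne 0$ forces at least one term to survive and the surviving terms still span generically; second, and more essentially, I need the fact that for a $G$-invariant valuation the value on the highest weight vector $f_\lambda$ equals the \emph{minimum} of the valuation over the whole copy of $V_\lambda$ sitting in $\C[G/H]$ — this should follow from the standard theory of the valuation cone and the fact that $v_\gamma$ extends to a $G$-invariant (hence $B$-semiinvariant-respecting) valuation, but I would want to cite the precise statement from the Luna–Vust / Knop theory recalled in Section~\ref{sec-prelim}. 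Once that input is in hand, the rest is the elementary asymptotics of a finite sum of monomials $|t|^{2k_i}$, whose $\log_t$ tends to $2\min_i k_i$.
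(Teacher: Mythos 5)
Your proposal is correct, and it takes a genuinely cleaner route to the same conclusion than the paper does. The paper replaces $\phi_\lambda$ by the $K$-average $\psi_\lambda(x) = \int_K |f_\lambda(k^{-1}\cdot x)|^2\,dk$ (a positive multiple of $\phi_\lambda$ by Remark~\ref{rem-phi_k-vs-average-h}), then (a) uses the $KB$-decomposition of $G$ to produce a dense open $U'\subset K$ on which $\ord_t\,(k\cdot f_\lambda)(\gamma(t))$ equals the invariant value $v_\gamma(f_\lambda)$, and (b) justifies term-by-term integration over $K$ to extract the leading coefficient of $\psi_\lambda(\gamma(t))$. You bypass the integral entirely: since $\phi_\lambda$ is by definition the \emph{finite} sum $\sum_i |f_{\lambda,i}|^2$ and the summands are nonnegative (so no cancellation), $\log_t\phi_\lambda(\gamma(t))\to 2\min_i\ord_t f_{\lambda,i}(\gamma(t))$, and the remaining work is purely algebraic. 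Your approach trades the analytic bookkeeping (uniform convergence, integration over a dense open set) for the cleaner linear-algebra fact that the minimum of $\hat v_\gamma = \ord_t\circ\gamma^*$ over a spanning set of $V_\lambda$ equals its minimum over all of $V_\lambda$. Both proofs ultimately rest on the same input, Theorem~\ref{th-Sumihiro}(2): $v_\gamma(f_\lambda)=\min_{g\in G}\hat v_\gamma(g\cdot f_\lambda)$; combined with the fact that $\{g\cdot f_\lambda\}_{g\in G}$ spans the irreducible $V_\lambda$ and that any $h\in V_\lambda$ is a finite combination of such translates, this gives $\min_{h\in V_\lambda}\hat v_\gamma(h) = v_\gamma(f_\lambda)$, which is exactly the algebraic step your argument needs.

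Two points in your write-up should be corrected before it becomes a proof. First, the opening formula
\[
\phi_\lambda(gH)\;=\;\frac{\langle g\cdot f_\lambda,\,g\cdot f_\lambda\rangle}{\langle f_\lambda,\,f_\lambda\rangle}
\]
is not right and is not even well-defined on $G/H$, since $H$ does not stabilize the highest weight vector $f_\lambda$. The intrinsic description in this spirit is $\phi_\lambda(gH)=\|g\cdot\mathrm{ev}_{eH}\|^2_{V_\lambda^*}$, where $\mathrm{ev}_{eH}\in V_\lambda^*$ is the $H$-fixed evaluation-at-the-base-point functional and the norm is the dual $K$-invariant one. This slip does no damage because what you actually use is the (correct) sum-of-squares description from \eqref{eq: def spherical}. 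Second, your justification of the key step conflates the curve valuation $\hat v_\gamma=\ord_t\circ\gamma^*$ with its $G$-invariant saturation $v_\gamma$, and misattributes the needed fact to ``$G$-invariant valuations are determined by their values on highest weight vectors'' (that is Theorem~\ref{theorem: Luna-Vust}, the injectivity of $\rho$, and is not what is used here). What you need is precisely Theorem~\ref{th-Sumihiro}(2) applied to $\hat v_\gamma$, together with the span argument above; with that citation in place the argument is complete.
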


Theorem \ref{th-intro:phi-approach-inv-valuation} yields the following corollary (see Corollary~\ref{corollary Kuratowski}).

\begin{CORO}\label{coro: intro} 
(1) The Kuratowski limit, as $t \to 0$, of the image $\sLog_{\Gamma, t}(G/H)$ contains the valuation cone $\V_{G/H}$. 
(2) When $G/H$ is horospherical, this limit is the entire vector space, which in this case coincides with the valuation cone. 
\end{CORO}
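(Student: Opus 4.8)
The plan is to deduce the Corollary from the single-curve limit computation of Theorem~\ref{th-phi-approach-inv-valuation}, together with two soft inputs: the Kuratowski lower limit of a family of subsets of $\R^s$ is always closed (Definition~\ref{def-Kuratowski}), and $G$-invariant valuations arising from formal curves in $G/H$ realize a dense subset of the valuation cone $\V_{G/H}$.

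\emph{Part (1).} Fix a formal Laurent curve $\gamma$ in $G/H$ that converges for all small $t>0$, with associated $G$-invariant valuation $v_\gamma\in\V_{G/H}$; recall that $v_\gamma(f_\lambda)=\langle v_\gamma,\lambda\rangle$ for a highest-weight vector $f_\lambda\in V_\lambda$. Applying Theorem~\ref{th-phi-approach-inv-valuation} to each $\lambda_i\in\Gamma$ gives
\[
\lim_{t\to 0}\sLog_{\Gamma,t}(\gamma(t))=2\bigl(\langle v_\gamma,\lambda_1\rangle,\dots,\langle v_\gamma,\lambda_s\rangle\bigr),
\]
which is $2$ times the image of $v_\gamma$ under the linear embedding $\V_{G/H}\hookrightarrow\R^s$ fixed above. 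Since $t\mapsto\gamma(t)$ is a path in $G/H$, this point lies in $\liminf_{t\to 0}\sLog_{\Gamma,t}(G/H)$. Because this lower limit is closed and $\V_{G/H}$ is a cone (so $2\V_{G/H}=\V_{G/H}$), it now suffices to show that $\{v_\gamma\}$ is dense in $\V_{G/H}$. For this I would use the Luna--Vust theory of spherical embeddings: every lattice point of $\V_{G/H}$ is realized by the divisorial valuation of a $G$-stable prime divisor $D$ in some spherical embedding of $G/H$, hence by a formal curve meeting $D$ at a general point, and reparametrizing such a curve by $t\mapsto t^{1/N}$ realizes an arbitrary rational point of $\V_{G/H}$ (this is essentially the computation of the spherical tropicalization of $G/H$ in \cite{Vogiannou, Kaveh-Manon}). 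Since rational points are dense in the rational polyhedral cone $\V_{G/H}$, we conclude $\V_{G/H}\subseteq\liminf_{t\to 0}\sLog_{\Gamma,t}(G/H)$, which proves (1).

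\emph{Part (2).} When $G/H$ is horospherical one has $\V_{G/H}=\mathcal{Q}_{G/H}\otimes\R$, the whole ambient vector space, and (as recalled in the Introduction) the spherical logarithm is normalized so that $\sLog_t(G/H)\subseteq\mathcal{Q}_{G/H}\otimes\R$. Part (1) gives $\liminf_{t\to 0}\sLog_t(G/H)\supseteq\V_{G/H}=\mathcal{Q}_{G/H}\otimes\R$, while $\limsup_{t\to 0}\sLog_t(G/H)\subseteq\mathcal{Q}_{G/H}\otimes\R$ trivially, every set in question lying in this closed vector space. Hence $\liminf=\limsup=\mathcal{Q}_{G/H}\otimes\R$, so the Kuratowski limit exists and equals $\mathcal{Q}_{G/H}\otimes\R=\V_{G/H}$.

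The only non-routine step is the density input in Part (1): that a dense set of points of $\V_{G/H}$ is realized by curves to which Theorem~\ref{th-phi-approach-inv-valuation} applies --- in particular curves that converge for small $t$ and whose associated valuation can be prescribed with arbitrary rational coordinates rather than just lattice points. I expect this to be the crux, and to require either that the notion of ``formal Laurent curve'' (and the proof of Theorem~\ref{th-phi-approach-inv-valuation}) be read to include fractional-power reparametrizations, or a separate approximation argument via $G$-stable divisors in spherical embeddings; the remaining manipulations with Kuratowski limits and the cone structure of $\V_{G/H}$ are straightforward.
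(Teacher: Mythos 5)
Your argument is correct and takes essentially the paper's route: deduce the containment from the single-curve computation of Theorem~\ref{th-phi-approach-inv-valuation}, then use that the valuation cone is all of $\mathcal{Q}_{G/H}\otimes\R$ in the horospherical case for Part~(2). The paper's own proof of Part~(1) is a single sentence (``follows immediately from Theorem~\ref{th-phi-approach-inv-valuation}''), which elides exactly the two reconciliations you make explicit: the factor of $2$ (absorbed because $\V_{G/H}$ is a cone) and, more substantively, the density step needed to pass from ``certain curves'' to all of $\V_{G/H}$. Your handling of the density issue is sound: lattice points of $\V_{G/H}$ are realized by convergent algebraic curves, and rational points by fractional-power reparametrization (equivalently, by applying the theorem to $\tilde\gamma(s)=\gamma(s^N)$ and noting $\log_{t}=\tfrac{1}{N}\log_{t^{1/N}}$), after which closedness of the Kuratowski lower limit finishes the argument. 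One stylistic simplification: rather than invoking $G$-stable divisors in spherical embeddings, you can get the lattice-point curves directly from Theorem~\ref{th-non-Arch-Cartan-decomp} (the non-Archimedean Cartan decomposition), which already produces a convergent curve $\lambda(t)\in T_{G/H}(\K)$ for each $\lambda\in\check\Lambda_{G/H}\cap\V_{G/H}$; this is the cleanest source of the dense family. Your Part~(2) argument is also essentially the paper's, with the correct extra observation that the upper Kuratowski limit is trivially contained in the ambient vector space, so both limits agree.
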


Statement (1) in Corollary~\ref{coro: intro} shows that $\V_{G/H}$ is contained in the Kuratowski limit of the images $\sLog_{\Gamma, t}(G/H)$. When is the limit exactly $\V_{G/H}$? 
In Section~\ref{sec-exs}, we give some examples for which, with appropriate choices of $\Gamma$, the images $\sLog_{\Gamma, t}(G/H)$ limits to $\V_{G/H}$ as $t \to 0$: (i) the basic (quasi) affine space $\SL_n(\C)/U$ (cf. Example~\ref{ex-sph-var}(4)), (ii) the ``group case'' with $\SL_n(\C) \times \SL_n(\C)$ acting on $\SL_n(\C)$ (cf. Example~\ref{ex-sph-var}(5)), (iii) $\SL_n(\C)/\SL_{n-1}(\C)$, and (iv) the space of hyperbolic triangles. These examples give some evidence that the answer to the question (1) above may be positive, and hence the $K$-orbits $K \backslash G/H$ is related, in an appropriate limit, to $\V_{G/H}$. In fact, the first author previously conjectured a tight relation between $K \backslash G/H$ and $\V_{G/H}$. Since the conjecture is not recorded elsewhere, we put it here. 

{\begin{CONJEC}[Batyrev]  \label{conj-Batyrev}
The $K$-orbit space $K\backslash G/H$ is a stratified manifold with corners where the boundary faces $X_\sigma$ are in natural bijection with the faces $\sigma$ of the valuation cone $\V_{G/H}$, and the stabilizers of the points in 
the relative interior of $X_\sigma$
are maximal compact subgroups in the corresponding satellite subgroup of $H$ (as defined in \cite{Batyrev-Moreau}). Moreover, $K\backslash G/H$ is homeomorphic to the valuation cone as a stratified space, i.e., there is a homeomorphism $\Phi: \V_{G/H} \to K \backslash G/H$ that, for every face $\sigma \subset \V_{G/H}$, restricts to a homeomorphism between the relative interior of $\sigma$ and that of $X_\sigma$.  
\end{CONJEC}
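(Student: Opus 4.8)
The plan is to build the homeomorphism $\Phi\colon \V_{G/H}\to K\backslash G/H$ out of the spherical logarithm map, and to read off the stratification by faces from the $K$-action on $G/H$ together with the Luna--Vust degeneration theory of $G/H$ along faces of $\V_{G/H}$. Fix a generating set $\Gamma=\{\lambda_1,\dots,\lambda_s\}$ of $\Lambda^+_{G/H}$ and work with the ``natural base'' map $L:=\sLog_{\Gamma,e}\colon G/H\to\R^s$, $L(x)=(\log\phi_{\lambda_1}(x),\dots,\log\phi_{\lambda_s}(x))$; the dependence on $t$ is just an overall rescaling by $1/\log t$. By Theorem~\ref{th-intro:separate K-orbits and algebra generators}, $L$ is constant on $K$-orbits and separates them, so it descends to a continuous bijection $\bar L\colon K\backslash G/H\to L(G/H)$. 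First I would upgrade $\bar L$ to a homeomorphism: since $K$ is compact, $K\backslash G/H$ is locally compact Hausdorff and the quotient map $G/H\to K\backslash G/H$ is proper, so it suffices to show $\bar L$ is proper, which reduces to showing $\Phi_\Gamma=(\phi_{\lambda_1},\dots,\phi_{\lambda_s})$ is proper onto its image. The natural route is to enlarge $\Gamma$ so that the highest-weight vectors $f_{\lambda_i}$ together with the full $G$-submodules $V_{\lambda_i}$ generate $\C[G/H]$ (using quasi-affineness), and to convert coercivity of the $|f|^2$-type quantities cutting out $G/H$ into coercivity of the $\phi_{\lambda_i}$. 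This reduces Conjecture~\ref{conj-Batyrev} to three purely geometric claims about $L(G/H)\subset\R^s$: it is a stratified manifold with corners; its closed faces are naturally indexed by the faces $\sigma$ of $\V_{G/H}$; and it is homeomorphic, respecting this indexing, to the embedded copy of $\V_{G/H}$.

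For the stratification I would invoke the local structure theory of spherical varieties. Since the compact group $K$ acts smoothly on the manifold $G/H$, the orbit-type decomposition of $K\backslash G/H$ is automatically a stratification by manifold pieces; the work is to show the top stratum is open and dense and the deeper strata are the boundary faces of a manifold with corners. The plan is to establish a Cartan/polar-type theorem, generalizing the symmetric case and the horospherical case (the latter already implicit in Corollary~\ref{coro: intro}(2)): every $K$-orbit should meet a fixed ``polar'' submanifold $P\subset G/H$ transverse to the $K$-orbits and parametrized by $\V_{G/H}$, so that the $K$-stabilizer of a point of $P$ over the relative interior of a face $\sigma$ is constant along that stratum. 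To identify these stabilizers with maximal compact subgroups of the satellite subgroups $H_\sigma$ of \cite{Batyrev-Moreau}, I would degenerate: a curve $\gamma$ in $P$ realizing a $G$-invariant valuation in the relative interior of $\sigma$ should converge, under the toroidal degeneration of $G/H$ along $\sigma$, to a point of the $G$-orbit $G/H_\sigma$, and by Theorem~\ref{th-intro:phi-approach-inv-valuation} the functions $\phi_\lambda\circ\gamma$ degenerate to the spherical functions of $G/H_\sigma$; this should force the limiting $K$-stabilizer to be a maximal compact subgroup of $H_\sigma$. The face poset of the resulting strata is then the face poset of $\V_{G/H}$ by the Luna--Vust dictionary, and since $\V_{G/H}$ is a simplicial cone (a fundamental domain for the little Weyl group), ``manifold with corners'' is the correct local model.

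It remains to glue these pieces into the global homeomorphism. The most promising route is to prove that $L(G/H)$ is a proper ``curved cone'' asymptotic to the embedded $\V_{G/H}$, with no extra asymptotic directions and no internal topology, and then to take $\Phi$ to be the composite of $\bar L^{-1}$ with a stratified, face-compatible homeomorphism $\V_{G/H}\to L(G/H)$ obtained by radially straightening this curved cone; the convergence statement $\lim_{t\to0}\sLog_{\Gamma,t}(G/H)=\V_{G/H}$ sketched in the paper and verified in the examples of Section~\ref{sec-exs} is precisely the asymptotic input this needs. An alternative is to build $\Phi$ directly from a $K$-invariant gradient-like vector field on $G/H$ whose flow sweeps points onto $P$ while its ``angular'' data reconstructs the face structure, in the spirit of Morse-theoretic constructions for moment maps.

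The hard part will be this last step, together with the ``no exotic ends, no accumulation of strata, no nontrivial core'' control it requires, established uniformly over all faces of $\V_{G/H}$ simultaneously. Even the fixed-$t$ statement that $\sLog_{\Gamma,t}(G/H)$ limits exactly to $\V_{G/H}$ is at present only conjectural and proved case by case, so controlling the fixed-$t$ image $L(G/H)$ precisely enough to obtain a homeomorphism onto $\V_{G/H}$ together with all of its boundary faces is the principal obstacle, and is likely to require non-Archimedean or analytic tools beyond those developed here. As sanity checks, the conjecture holds for symmetric spaces by the generalized Cartan decomposition and for horospherical $G/H$ by Corollary~\ref{coro: intro}(2).
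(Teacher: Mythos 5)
The statement you are addressing is presented in the paper as Conjecture~\ref{conj-Batyrev}, with no proof offered, so there is no argument in the paper to compare yours against. What you have written is a research program rather than a proof, and you acknowledge as much at the end; still, several of its steps as written are circular or rely on facts not supplied by the tools in the paper, so it is worth naming them.

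The central gap is that the existence of a ``polar'' submanifold $P\subset G/H$ transverse to all $K$-orbits, parametrized by $\V_{G/H}$, with stabilizer locally constant along strata indexed by faces of $\V_{G/H}$, is essentially a restatement of the conjecture itself; assuming such a $P$ in order to establish the stratification is circular. The paper only exhibits such structure in special cases (horospherical, the group case, $\SL_n/U$, $\SL_n/\SL_{n-1}$, hyperbolic triangles), and its introduction explicitly remarks that no general concrete description of $K\backslash G/H$ is known. Your appeal to Theorem~\ref{th-intro:phi-approach-inv-valuation} to identify the limiting $K$-stabilizer with a maximal compact of the satellite $H_\sigma$ also overstates what that theorem provides: it computes the growth rate $\lim_{t\to 0}\log_t\phi_\lambda(\gamma(t))=2v_\gamma(f_\lambda)$, not a convergence of $\phi_\lambda$ restricted to $\gamma$ to a spherical function of $G/H_\sigma$, so a genuinely new comparison of spherical functions under toroidal degeneration would be needed. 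The properness of $\Phi_\Gamma$ (needed to make $\bar L$ a homeomorphism), the claim that $L(G/H)$ is a ``curved cone asymptotic to $\V_{G/H}$ with no exotic ends,'' and the face-compatible radial straightening are all left as work to be done, and you note yourself that even the fixed-$t$ Kuratowski-limit statement is, at present, only verified case by case. Two smaller points: $\V_{G/H}$ is \emph{co-simplicial} by Theorem~\ref{th-val-cone-co-simplicial}, not simplicial in general, so the local manifold-with-corners model has to be argued from the co-simplicial description; and the closing assertion that the full conjecture (including the satellite-stabilizer statement) is already known for symmetric spaces is not claimed in the paper and would need a citation.
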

}

{As a special case we also state the following conjecture.
\begin{CONJEC}  \label{conj-horosph-K-orbit-space}
A spherical homogeneous space $G/H$ is horospherical if and only if the $K$-orbits space $K\backslash G/H$ is homeomorphic to a Euclidean space. 
\end{CONJEC}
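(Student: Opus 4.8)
This is an equivalence; one direction is elementary structure theory, while the other is, in essence, a special case of Conjecture~\ref{conj-Batyrev}. For the direction \emph{horospherical $\Rightarrow$ Euclidean}, the plan is to reduce $K\backslash G/H$ to the $K$-quotient of a torus via the fibration of a horospherical homogeneous space over a flag variety. First I would set $P := N_G(H)$, which for horospherical $G/H$ is a parabolic subgroup with $P/H$ a torus, and, after conjugating, arrange that $P$ is a standard parabolic with Levi decomposition $P = LU_P$ whose Levi factor $L$ is stable under the Cartan involution $\theta$ with $K = G^\theta$. Two standard facts then do the work: (a) $K$ acts transitively on the projective variety $G/P$, since $G = KP$ by the Iwasawa decomposition (the Borel used there being contained in $P$); and (b) $\stab_K(eP) = K \cap P = K \cap L =: K_L$ is a maximal compact subgroup of $L$, where the non-obvious inclusion $K\cap P \subseteq K\cap L$ uses the $\theta$-stability of $L$ together with uniqueness of factorization in the big cell of $P$. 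Feeding (a) and (b) into the $G$-equivariant fibration $G/H \to G/P$ with fiber $P/H$, every $K$-orbit meets the fiber over $eP$, and two points of that fiber are $K$-conjugate exactly when they are $K_L$-conjugate, so the inclusion $P/H \hookrightarrow G/H$ descends to a homeomorphism $K_L\backslash(P/H) \xrightarrow{\ \sim\ } K\backslash G/H$.

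To finish this direction I would note that $P/H \cong (\C^*)^r$, with $r = \operatorname{rank}(G/H)$, is a quotient torus of the central torus $T_L := L/(L,L)$ of $L$, and that the image of $K_L$ in $T_L$ is the maximal compact subgroup of $T_L$ (already the maximal compact subgroup of the connected center of $L$ surjects onto it); hence the image of $K_L$ in the quotient torus $P/H$ is the full compact torus $(S^1)^r$, and
\[
K\backslash G/H \;\cong\; K_L\backslash(P/H) \;\cong\; (S^1)^r\backslash(\C^*)^r \;\cong\; \R^r,
\]
the last homeomorphism being the classical $\Log$ map. (Alternatively, more in the spirit of this paper: when $G/H$ is horospherical the spherical functions are multiplicative by Lemma~\ref{lem-multiplication and tails}, so the canonical map $\sLog_t \colon G/H \to \mathcal{Q}_{G/H}\otimes\R \cong \R^r$ is defined, $K$-invariant, and separates $K$-orbits by Theorem~\ref{th-intro:separate K-orbits and algebra generators}; using the fibration one checks it is surjective and proper, hence induces a homeomorphism $K\backslash G/H \cong \R^r$, in agreement with Corollary~\ref{coro: intro}(2).)

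For the direction \emph{Euclidean $\Rightarrow$ horospherical} --- the substantial half, and the step I expect to be the main obstacle --- the cleanest route is conditional on Conjecture~\ref{conj-Batyrev}. If that conjecture holds then $K\backslash G/H$ is homeomorphic to the valuation cone $\V_{G/H}$, so a homeomorphism $K\backslash G/H \cong \R^r$ forces $\V_{G/H} \cong \R^r$. But $\V_{G/H}$ is a full-dimensional closed convex cone in the $r$-dimensional space $\mathcal{Q}_{G/H}\otimes\R$, and a proper such cone has nonempty topological boundary whose points admit no Euclidean neighborhood; hence $\V_{G/H} \cong \R^r$ forces $\V_{G/H} = \mathcal{Q}_{G/H}\otimes\R$, and by a theorem of Knop this holds precisely when $G/H$ is horospherical.

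An \emph{unconditional} proof would instead have to produce, directly from the failure of horosphericity, an actual corner of $K\backslash G/H$. The plan I would try: if $G/H$ is not horospherical then $\V_{G/H}$ has a facet supporting a $G$-invariant valuation $v$; form the associated simple spherical embedding $X_v \supseteq G/H$ with its unique closed $G$-orbit $Z$; and analyze $K\backslash X_v$ near $Z$ using the local structure theorem for spherical varieties, aiming to exhibit the boundary face of Conjecture~\ref{conj-Batyrev} and so to show that $K\backslash G/H$ is not a manifold without boundary. The hard part will be the rigorous passage between the faces of $\V_{G/H}$ and the boundary strata of $K\backslash G/H$: this is precisely (a piece of) Conjecture~\ref{conj-Batyrev}, and is where the genuine difficulty lies. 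The present paper provides partial evidence --- Corollary~\ref{coro: intro}(1) shows $\V_{G/H}$ always lies inside the Kuratowski limit of the spherical amoebae $\sLog_{\Gamma,t}(G/H)$, and the computations of Section~\ref{sec-exs} confirm equality (hence the expected Euclidean-versus-cornered dichotomy) in several non-horospherical examples --- but a uniform proof of the reverse containment, together with the corresponding control of the limit's topology, remains open.
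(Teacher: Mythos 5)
The statement you are asked to prove is labeled \emph{Conjecture} \ref{conj-horosph-K-orbit-space} in the paper, and the paper offers no proof of it --- it only remarks that the spherical logarithm maps ``may provide a tool to attack'' it. So there is no argument of the paper's own to compare yours against, and any ``review'' has to be measured against internal correctness and against what the paper does establish in the surrounding discussion.

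Your treatment of the \emph{horospherical $\Rightarrow$ Euclidean} direction is essentially correct and, in fact, closely mirrors the sketch in Remark~\ref{rem-slog-horosph}: both use $P = N_G(H)$ parabolic, the fibration $G/H \to G/P$ with fiber the torus $S = P/H$, the Iwasawa decomposition $G = KP$ to get $K$ transitive on $G/P$, and the reduction to $K_S \backslash S \cong \R^r$. Your argument for $K \cap P \subseteq K \cap L$ via $\theta$-stability of $L$ and uniqueness in the big cell $U_P^- L U_P$ is the right way to make $\stab_K(eP) = K_L$ precise, and the verification that the image of $K_L$ in the quotient torus $P/H$ is the full compact torus is fine (the maximal compact of $Z(L)^\circ$ already surjects up to finite kernel). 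The only thing left slightly implicit is that the resulting bijection $K_L\backslash(P/H) \to K\backslash G/H$ is a \emph{homeomorphism}; this needs compactness of $K$ (so the map $K\times_{K_L}(P/H)\to G/H$ is a proper continuous bijection of locally compact Hausdorff spaces, hence a homeomorphism, and one then quotients by $K$). That is standard, but worth saying.

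For the \emph{Euclidean $\Rightarrow$ horospherical} direction you are candid that the argument is conditional, and your assessment is accurate: granting Conjecture~\ref{conj-Batyrev}, $K\backslash G/H$ is homeomorphic to $\overline{\V}_{G/H} \subseteq \R^r$ as a stratified space, so $K\backslash G/H\cong\R^r$ forces $\overline{\V}_{G/H}=\R^r$ because a proper full-dimensional closed convex cone has boundary points with no Euclidean neighborhood; and $\V_{G/H}$ filling the whole space characterizes horosphericity. This chain is sound as a conditional argument, and your observation that an unconditional proof would have to produce an actual boundary point of $K\backslash G/H$ from the existence of a nontrivial spherical root --- precisely the hard content of Conjecture~\ref{conj-Batyrev} --- is exactly where the genuine gap lies. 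The paper itself offers only partial evidence for the limiting behavior (Corollary~\ref{coro: intro}(1) and the examples in Section~\ref{sec-exs}), so your proposal is as complete as the state of the art permits: one direction done correctly, the converse honestly reduced to an open problem.
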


\noindent We believe that the spherical logarithm maps $\sLog_{\Gamma, t}$ may provide a tool to attack Conjecture \ref{conj-horosph-K-orbit-space}.

For historical context, we should note here that it has been known for some time that the theory of moment maps from symplectic geometry can be used to parametrize $K$-orbits for multiplicity-free spaces. The theory of multiplicity-free Hamiltonian spaces are related to this question of $K$-orbits on spherical $G/H$ for the following reason: if $X$ is smooth spherical variety embedded in a projective space which is equipped with a linear $K$-action and a $K$-invariant Hermitian structure (and hence has a $K$-invariant K\"ahler structure), then $X$ is a multiplicity-free Hamiltonian $K$-space. Thus, a $K$-equivariant embedding of $G/H$ into a projective space gives $G/H$ such a structure. Moreover, for a multiplicity-free Hamiltonian $K$-space $M$, it is known that the \textbf{Kirwan map}\footnote{ The Kirwan map is the composition of the $K$-moment map $\Phi: M \to \mathfrak{k}^*$ with the quotient map $\mathfrak{k}^*/K \cong \mathfrak{t}^*_+$ where $\mathfrak{t}$ denotes the Lie algebra of the (compact) maximal torus $T$ of $K$.} provides a parametrization of the $K$-orbits in $M$} (see \cite[Proposition 5.1]{Brion-moment}). {Thus, this Kirwan map provides another potential approach to Conjectures~\ref{conj-Batyrev} and~\ref{conj-horosph-K-orbit-space}. 
}
 
We now outline the contents of the paper.  In Section~\ref{sec-prelim} we briefly recall necessary background, including the definition of the spherical tropicalization map as in \cite{Vogiannou, Kaveh-Manon}.  Then in Section~\ref{sec: spherical functions} we define the spherical functions, following Akhiezer, and prove several properties about them. We also define the notion of a spherical logarithm map. In Section~\ref{sec: limit} we give precise statements of the Kuratowski convergence of subsets, and formulate our questions concerning the convergence of spherical amoebae.  We also prove the ``curve case'' recounted above, as well as the horospherical case. {Finally, in Section~\ref{sec-exs} we compute spherical functions in several important examples, and in some examples, we also give some natural parametrizations of the $K$-orbit space $K \backslash G/H$. }

\medskip

\noindent{\bf Acknowledgements.}
We would like to thank Dmitri Timashev for invaluably helpful personal discussions and correspondences clarifying several details. In particular, Example~\ref{ex-sph-functions-not-multiplicative}(1) is due to him. 
We also thank St\'ephanie Cupit-Foutou for helpful correspondence (and in particular for Remark~\ref{rem-Stephanie}). 
Some of this work was conducted at the Fields Institute for Research in the Mathematical Sciences during the fourth author's stay there as a Fields Research Fellow, and we thank the Fields for its support and hospitality. 
The second author was supported in part by a Natural Science and Engineering Research Council of Canada Discovery Grant and a Canada Research Chair (Tier 2) from the Government of Canada. The fourth author was partially supported by a National Science Foundation Grant (DMS-1601303) and a Simons Collaboration Grant for Mathematicians.

\section{Preliminaries on spherical varieties and tropicalization}
\label{sec-prelim}

In this section, we set some notation and briefly review some facts regarding spherical varieties.

\subsection{Background on spherical geometry} 
\label{subsec-background}
Let $G$ be a connected reductive algebraic group over $\C$. Let $B$ denote a choice of Borel subgroup $B$ of $G$ and let $T$ be a maximal torus with $T \subseteq B$. The weight lattice of $T$ is denoted by $\Lambda$, and the semigroup of dominant weights corresponding to the choice of $B$ is correspondingly denoted by $\Lambda^+$. The cone generated by $\Lambda^+$ is the positive Weyl chamber $\Lambda^+_\R$.
For a dominant weight $\lambda \in \Lambda^+$ we denote the irreducible $G$-module with highest weight $\lambda$ by $V_\lambda$. We usually denote a highest weight vector in 
$V_\lambda$ by $v_\lambda$.

Recall that a normal $G$-variety $X$ is called \textbf{spherical}, or a \textbf{spherical ($G$-)variety}, 
if there exists a dense $B$-orbit in $X$.  (Since all the Borel subgroups are conjugate, this condition is  independent of the choice of Borel subgroup $B$).
A homogeneous space ${G/H}$ is called spherical if it is spherical with respect to the left action of $G$ on itself by multiplication.

Let $X$ be a spherical $G$-variety and let $\C(X)$ be the field of rational functions on $X$. Since $G$ acts on $X$, there is also a natural $G$-action on $\C(X)$. We say that $f \in \C(X)$ is a $B$-eigenfunction (sometimes also called $B$-semi-invariants) if it has the property that $b \cdot f = e^{\lambda}(b) f$ for some $B$-weight $\lambda$ where $e^\lambda$ denotes the character associated to $\lambda$, i.e., for all $x \in X$ we have $(b \cdot f)(x) = f(b^{-1}x) = e^{\lambda}(b) f(x)$. Let $\C(X)^{(B)}$ denote the set of $B$-eigenfunctions in $\C(X)$, and let $\Lambda_X$ denote the subset of $\Lambda$ consisting of all weights arising from $B$-eigenfunctions, namely, 
\[
\Lambda_X := \{ \lambda \, \mid \, \exists \textup{ $B$-eigenfunction } f \in \C(X) \textup{ with } b \cdot f = e^{\lambda}(b) f \} \text{.} 
\]
 Consider the map $\C(X)^{(B)} \to \Lambda$ associating to a $B$-eigenfunction its corresponding weight (in the notation above, the map sends $f$ to $\lambda$). It is clear that this is a group homomorphism with respect to multiplication in $\C(X)^{(B)}$ and addition in $\Lambda$ and that its image is $\Lambda_X$ by definition. Hence $\Lambda_X$ is a sublattice of $\Lambda$. Moreover, since $X$ is spherical and thus there exists an open dense $B$-orbit, it follows that this map has kernel precisely the (non-zero) constant functions $\cong \C^*$, thus inducing an  
isomorphism between $\C(X)^{(B)} / \C^*$ and $\Lambda_X$. 

We briefly recall some standard examples of spherical varieties. 
\begin{Ex} \label{ex-sph-var}
\begin{itemize}
\item[(1)] Let $G = T = (\C^*)^n$ be an algebraic torus. In this case, $G=B=T=(\C^*)^n$, and therefore a spherical $T$-variety is the same as a toric $T$-variety. 
\item[(2)] Let $X = G/P$ be a partial flag variety, equipped with the usual left action of $G$ by multiplication. By the Bruhat decomposition, $X$ is then a spherical $G$-variety. This is an example of a projective spherical $G$-variety for a non-abelian $G$. 
\item[(3)] Let $G = \SL(2,\C)$ and cnsider the natural action of  $G = \SL(2, \C)$ on $\A^2$ by the usual (left) matrix multiplication. It is straightforward to see that $G$ acts transitively on 
$\A^2 \setminus \{(0,0)\}$. Moreover, the stabilizer of the point $(1, 0)$ is the maximal unipotent subgroup $U$ of upper triangular matrices in $\SL(2,\C)$ with $1$'s on the diagonal.
Thus $\A^2 \setminus \{(0,0)\}$ can be identified with the homogeneous space $G/U$. Let $B$ be the subgroup of upper triangular 
matrices in $\SL(2,\C)$. Then it is not hard to see that the $B$-orbit of the point $(0,1)$ is the dense open subset $\{(x, y) \mid y \neq 0\}$ of $\A^2$. Thus, $\A^2 \setminus \{(0,0)\} \cong G/U$ is a spherical variety. This is an example of a quasi-affine (but not affine) spherical variety. Similarly, it can be verified that $\A^n \setminus \{0\}$ is a spherical variety for the natural action of $G = \SL_n(\C)$. 
(However, for $n>2$, the $\SL_n(\C)$-stabilizer of a point in $\A^n \setminus \{0\}$ is larger than a maximal unipotent subgroup).
\item[(4)] More generally, consider $X = G/U$ where $U$ is a maximal unipotent subgroup of $G$, equipped with the left action of $G$. Again by the Bruhat decomposition, $X$ is a spherical $G$-variety, and it is well-known that $X$ is quasi-affine. It is useful to note that there is a natural fiber bundle $G/U \to G/B$, where $B$ is the Borel subgroup containing $U$, with fiber isomorphic to the torus $B/U \cong T$. 
\item[(5)] Let $X = G$ and consider the ``left-right'' action of $G \times G$ on $X=G$ given by $(g,h) \cdot k = g k h^{-1}$. Clearly this action is transitive, and the stabilizer of the identity $e$ is the diagonal subgroup 
$G_{\diag} = \{(g, g) \mid g \in G\} \subseteq G \times G$. Thus $X = G$ can be identified with the homogeneous space $(G \times G) / G_{\diag}$. 
Again from the Bruhat decomposition it follows that $X=G$ is a $(G \times G)$-spherical variety. Here the Borel subgroup of $G \times G$ is chosen to be $B \times B^-$, where $B$ is a Borel subgroup of $G$ and $B^-$ is its opposite. The $(G \times G)$-equivariant completions 
of $G$ are usually called {\it group compactifications}.  
\end{itemize}
\end{Ex}

For the rest of the paper we restrict attention to the setting of spherical homogeneous spaces, i.e., $X=G/H$. 
In the setting of spherical homogeneous spaces $X=G/H$, it turns out there is a convenient way to view the sublattice $\Lambda_X = \Lambda_{G/H}$, as follows. 
Choose $B$ a Borel subgroup so that the $B$-orbit of $eH \in G/H$ is dense in $G/H$; such a $B$ exists since $G/H$ is spherical. Now let $S$ be a maximal torus in the intersection $B \cap H$ and choose $T$ to be a maximal torus in $B$ containing $S$. Define $T_{G/H} := T / S$. It is known that the character lattice of $T_{G/H}$ can be identified with $\Lambda_{G/H}$. 

We next briefly review the theory of valuations on $\C(G/H)$. 
In this manuscript, by a valuation $\nu$ on $\C(G/H)$ we will 
mean a discrete $\Q$-valued valuation on $\C(G/H)/\C$, i.e., we assume
\begin{enumerate} 
\item $\nu: \C(G/H)^* \to \Q$ and $\nu(0) = \infty$, 
\item $\nu(\C(G/H)^*) \cong \Z$ or $\{0\}$, 
\item  $\nu(\C^*) = 0$, 
\item $\nu(fg) = \nu(f) + \nu(g)$, 
\item $\nu(f+g) \geq \min \{ \nu(f), \nu(g) \}$. 
\end{enumerate} 
A valuation $v: \C(G/H) \setminus \{0\} \to \Q$ is \textbf{$G$-invariant} if for any $g \in G$ and $f \in \C(G/H)$ we have $v(f) = v(g \cdot f)$. We define 
\begin{equation}\label{eq: def VGH}
\V_{G/H} := \{ \textup{ $G$-invariant valuations on } \C(G/H) \}. 
\end{equation}

From the Luna-Vust theory, it is known 
that any $G$-invariant valuation in the sense explained above is a geometric valuation, or more precisely, is induced by (up to multiplying by a constant in $\Q$) a divisor (i.e. is the order of vanishing along a divisor).

\begin{Ex}\label{example: torus invariant valuation} 
Let $G=T=(\C^*)^n$ and $H=\{1\}$. Then $G/H = (\C^*)^n$. For any vector $w \in \Q^n$ we can construct a $G=T$-invariant valuation $v_w$ as follows. For $f  = \sum_\alpha c_\alpha x^\alpha \in \C[G/H] \cong \C[T] \cong \C[x_1^{\pm}, \ldots, x_n^{\pm}]$ we define 
\[
v_w(f) := \min \{ w \cdot \alpha \, \mid \, c_\alpha \neq 0 \}.
\]
Then $v_w$ extends to the field of rational functions $\C(G/H)$ and this is easily checked to be $T$-invariant. 
\end{Ex}

Let $v$ be a 
$G$-invariant valuation. By the definition of valuations, $\nu$ gives rise to a homomorphism $\nu: \C(G/H)^{(B)} \to \Q$, where the group operation on $\C(G/H)^{(B)}$ is multiplication of functions. Moreover, by assumption, the valuation evaluates trivially on constant functions, so in fact we obtain a group homomorphism $\C(G/H)^{(B)}/\C^* \to \Q$. The identification $\C(G/H)^{(B)}/\C^* \cong \Lambda_{G/H}$ above then allows us to view this as a linear map $\Lambda_{G/H} \to \Q$. We denote this linear map by $\rho(v) \in \Hom(\Lambda_{G/H}, \Q)$. We introduce the notation $\mathcal{Q}_{G/H} := \Hom(\Lambda_{G/H}, \Q)$ for this $\Q$-vector space. With this notation, the correspondence $v \mapsto \rho(v) \in \Hom(\Lambda_{G/H},\Q)$ is a mapping 
\begin{equation}\label{eq: def rho}
\rho: \V_{G/H} \to \mathcal{Q}_{G/H} := \Hom(\Lambda_{G/H},\Q).
\end{equation} 
The following is well-known \cite[7.4 Prop]{Luna-Vust}. 
\begin{Th}\label{theorem: Luna-Vust} 
 With the assumptions as above, the map $\rho: \V_{G/H} \to \mathcal{Q}_{G/H}$ is injective, i.e., a $G$-invariant valuation is uniquely determined by its restriction to the $B$-eigenfunctions.
 \end{Th}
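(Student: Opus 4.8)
The plan is to give a self-contained argument resting on two features of the spherical setting: the $G$-action on $\C[G/H]$ is locally finite, and $\C[G/H]$ is a \emph{multiplicity-free} $G$-module (which, for quasi-affine $G/H$, is equivalent to sphericity). First I would reduce to regular functions. Since $G/H$ is quasi-affine we have $\C(G/H) = \operatorname{Frac}(\C[G/H])$, so writing an arbitrary $f \in \C(G/H)^*$ as $p/q$ with $p, q \in \C[G/H]$ and using $v(f) = v(p) - v(q)$, it suffices to show that $v(p)$ is determined by $\rho(v)$ for every $p \in \C[G/H]$.

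The first ingredient is that a $G$-invariant valuation $v$ is constant on the nonzero elements of any finite-dimensional irreducible $G$-submodule $M \subseteq \C(G/H)$: setting $c := \min\{v(m) : m \in M \setminus \{0\}\}$, the set $\{m \in M : v(m) > c\} \cup \{0\}$ is a $\C$-subspace (by the ultrametric inequality together with $v(\C^*) = 0$), is $G$-stable (by $G$-invariance of $v$), and is proper, hence $0$ by irreducibility. Taking a highest weight vector $h \in M$ of weight $\lambda$ — necessarily a $B$-eigenfunction, so $\lambda \in \Lambda_{G/H}$ — we conclude $v(m) = v(h) = \rho(v)(\lambda)$ for every $m \in M \setminus \{0\}$; write $v(M)$ for this common value.

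The crux is the following claim: for $p \in \C[G/H]$, let $M := \langle G \cdot p \rangle$ be the $G$-submodule generated by $p$ (finite-dimensional by local finiteness), and decompose $M = \bigoplus_{j=1}^{k} M_j$ into irreducibles, so that $p = \sum_j p_j$ with $p_j \in M_j$. Each $p_j$ is nonzero, since the equivariant projection $M \to M_j$ sends $p$ to $p_j$ and is onto, forcing $\langle G \cdot p_j \rangle = M_j \neq 0$. Then $v(p) = \min_j v(M_j)$: the inequality $\geq$ is clear, and for $\leq$ suppose $v(p) > c := \min_j v(M_j)$, put $J := \{j : v(M_j) = c\}$, and write $p = p' + q$ with $p' := \sum_{j \in J} p_j$, $q := \sum_{j \notin J} p_j$; then $v(q) > c$, so $v(p') \geq \min(v(p), v(q)) > c$. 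Here is where multiplicity-freeness enters: the $M_j$ are pairwise non-isomorphic, so every $G$-submodule of $\bigoplus_{j \in J} M_j$ is a partial direct sum $\bigoplus_{j \in J'} M_j$; since $W := \{x \in \bigoplus_{j \in J} M_j : v(x) > c\} \cup \{0\}$ is such a submodule meeting each $M_j$ ($j \in J$) only in $0$ by the previous step, we get $W = 0$, hence $p' = 0$, contradicting $p_j \neq 0$. Thus $v(p) = \min_j v(M_j) = \min_j \rho(v)(\lambda_j)$ depends only on $\rho(v)$, which proves $\rho$ is injective.

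The step I expect to be the main obstacle is the no-cancellation claim $v(p) = \min_j v(M_j)$: without the spherical hypothesis, two isomorphic irreducible summands of $\C[G/H]$ could conspire so that $v(p)$ jumps strictly above $\min_j v(M_j)$, and the argument fails — so sphericity is genuinely used here, not merely in defining $\rho$. A secondary, more routine point is the reduction to $\C[G/H]$, for which quasi-affineness is convenient; in general one would instead run the same argument with the space of global sections of a $G$-linearized ample line bundle on a projective spherical model of $G/H$. It is worth noting that this argument does not invoke the fact, recalled above, that $G$-invariant valuations are geometric.
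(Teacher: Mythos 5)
The paper does not prove this theorem; it cites \cite[7.4 Prop]{Luna-Vust} and moves on, so there is no ``paper's own proof'' to compare against.

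Your argument is correct and complete, and it is essentially the standard proof one finds in the literature (e.g.\ in Knop's survey on Luna--Vust theory, whose Corollary~1.8 is this statement). The three ingredients you isolate are exactly the right ones: (i) a $G$-invariant discrete valuation is constant on the nonzero vectors of any finite-dimensional irreducible $G$-submodule of $\C(G/H)$, by the ultrametric/irreducibility argument; (ii) the value on an irreducible $W_\lambda \subseteq \C[G/H]$ equals $\rho(v)(\lambda)$, read off from the $B$-highest-weight vector; and (iii) for $p = \sum_j p_j$ with $p_j$ lying in pairwise non-isomorphic irreducibles, $v(p) = \min_j v(p_j)$, where multiplicity-freeness of $\C[G/H]$ forces the ``excess valuation'' subspace $W$ to be a partial direct sum and hence zero. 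The reduction from $\C(G/H)$ to $\C[G/H]$ via quasi-affineness, and the observation that each projection $p_j$ is nonzero because the cyclic module $\langle G \cdot p\rangle$ surjects equivariantly onto each summand, are both handled correctly. One small point worth making explicit (you use it implicitly): the minimum $\min\{v(m) : m \in M\setminus\{0\}\}$ exists because $v$ is discrete and is bounded below on $M$ by $\min_i v(m_i)$ for any basis $m_i$ of $M$. The difference from the usual phrasing (Knop decomposes further into $B$-eigenvectors rather than irreducible $G$-constituents, and works with the $G$-saturation $\bar v$ of an arbitrary valuation) is cosmetic; your version is cleaner for the specific injectivity statement since you can assume $G$-invariance from the start. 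Your closing remark that the argument never invokes the geometricity of $G$-invariant valuations is accurate and worth keeping.
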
 
 
 Due to the above theorem, we may henceforth identify $\V_{G/H}$ with its image $\rho(\V_{G/H}) \subseteq \mathcal{Q}_{G/H}$. 

Note there is a natural pairing between $G$-invariant valuations and the lattice $\Lambda_{G/H}$ given by 
$\langle v, \lambda \rangle := \rho(v)(\lambda)$.

\begin{Ex} \label{rem-val-weight-vec}
Continuing Example~\ref{example: torus invariant valuation}, the lattice $\Lambda_{G/H}$ in the case $G/H=T=(\C^*)^n$ is isomorphic to $\Z^n$. The $B$-semi-invariant functions $\C(G/H)^{(B)} = \C(T)^{(T)}$ are the monomials $x^\alpha = x_1^{\alpha_1} x_2^{\alpha_2} \cdots x_n^{\alpha_n}$, and the monomial $x^\alpha$ corresponds to the $B=T$-weight $\alpha = (\alpha_1,\ldots,\alpha_n) \in \Z^n \cong \Lambda_{G/H}$. 
In this special case, it is straightforward to explicitly compute the natural pairing described above. 
For a $G(=T)$-invariant valuation $v_w$ as constructed in Example~\ref{example: torus invariant valuation}, the element $\rho(v_w) \in \Hom(\Lambda_{G/H},\Q)$ sends $\alpha$ to $\nu_w(x^\alpha) = w \cdot \alpha$ (by definition of $v_w$), i.e., the usual dot product of $w \in \Q^n$ with $\alpha \in \Z^n$. 
\end{Ex}

The following result is due to Brion \cite{VUGen} and Knop \cite{WeylMom}.


\begin{Th} \label{th-val-cone-co-simplicial}
The set $\V_{G/H}$ is a co-simplicial cone in the vector space $\mathcal{Q}_{G/H}$. Moreover, it is the fundamental domain for the action of a Weyl 
group of a root system. 
More precisely, there exists a set of simple roots $\beta_1, \ldots, \beta_\ell$ in this root system such that the cone $\V_{G/H}$ is defined by:
$$\V_{G/H} = \{ v \in \mathcal{Q}_{G/H} \mid \langle v, \beta_i \rangle \leq 0 , ~\forall i=1, \ldots, \ell \}$$
where the pairing $\langle \cdot, \cdot \rangle$ is the one described above, and the $\beta_i$ lie in $\Lambda_{G/H}$. 
\end{Th}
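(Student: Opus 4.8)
The plan is to recall the structure of the known proof (due to Brion \cite{VUGen} and Knop \cite{WeylMom}); I do not expect a short self-contained argument, so I will indicate which steps go beyond the Luna--Vust input recalled in Section~\ref{sec-prelim}. I would split the argument into three stages: (i) $\V_{G/H}$ is a convex cone in $\mathcal{Q}_{G/H}$; (ii) there is a finite set $\beta_1,\dots,\beta_\ell\in\Lambda_{G/H}$ of \emph{linearly independent} ``spherical roots'' with $\V_{G/H}=\{v\in\mathcal{Q}_{G/H}\mid\langle v,\beta_i\rangle\le 0\ \text{for all}\ i\}$, which already yields co-simpliciality (a cone cut out by linearly independent linear inequalities has a simplicial dual cone); and (iii) the $\beta_i$ form a system of simple roots of a finite root system whose Weyl group $W_{G/H}$ acts on $\mathcal{Q}_{G/H}$ with $\V_{G/H}$ as a closed fundamental domain.

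For stage (i): by Theorem~\ref{theorem: Luna-Vust} we regard $\V_{G/H}$ as a subset of $\mathcal{Q}_{G/H}$, and closure under positive rational scaling is immediate since $\Q_{>0}\cdot v$ is again a $G$-invariant valuation. For convexity I would argue geometrically: given $G$-invariant (hence divisorial) valuations $v_1,v_2$ and a rational point $w$ on the segment $[\rho(v_1),\rho(v_2)]$, pass to a common normal $G$-model of $\C(G/H)$ on which both $v_i$ are realized by prime divisors, perform a $G$-equivariant blow-up separating their centers, and take the order of vanishing along a suitable component of the exceptional locus (``deformation to the normal cone''); each step being $G$-equivariant, the resulting valuation is again $G$-invariant, and an appropriate component realizes $w$. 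This step uses only Luna--Vust theory and standard birational geometry.

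For stage (ii): one first checks that $\V_{G/H}$ is solid modulo its linearity space $\V_{G/H}\cap(-\V_{G/H})$, i.e.\ has nonempty interior in the corresponding quotient of $\mathcal{Q}_{G/H}$; a convex rational cone with that property is cut out by finitely many rational linear inequalities $\langle v,\beta_i\rangle\le 0$, and by definition the primitive inward normals $\beta_i\in\Lambda_{G/H}$ are the spherical roots of $G/H$. The real content of the theorem is then the assertion that these $\beta_i$ are linearly independent.

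Stage (iii) is the heart of the matter, and I expect it to be the main obstacle; it requires either Knop's Hamiltonian/moment-map machinery or Brion's inductive analysis of rank-one spherical varieties, and has no shortcut through the elementary material of Section~\ref{sec-prelim}. Following Knop, I would pass to the cotangent bundle $T^*(G/H)$ with its moment map $\mu\colon T^*(G/H)\to\mathfrak g^*$; the intersection of the closure of the image of $\mu$ with a Cartan subspace is a cone, and the Galois group of the generically finite morphism from the symplectic reduction onto that cone is a finite reflection subgroup $W_{G/H}\subseteq W$. One then identifies its reflections with the $s_{\beta_i}$, so that the spherical roots become the simple roots of the root system of $W_{G/H}$, and matches the Weyl chamber coming from this picture with the half-space description from stage (ii) --- the nontrivial point being that every antidominant element of $\mathcal{Q}_{G/H}$ is realized by a genuine $G$-invariant valuation, again supplied by the geometric construction of stage (i). Granting this matching, $\V_{G/H}$ is simultaneously the antidominant Weyl chamber of a root system (hence a fundamental domain for $W_{G/H}$) and an intersection of linearly independent half-spaces (hence co-simplicial), which is the claim; in a rigorous write-up I would cite \cite{VUGen, WeylMom} for this last stage rather than reproduce it.
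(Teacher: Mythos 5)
The paper does not prove Theorem~\ref{th-val-cone-co-simplicial}: it is stated as a known result with citations to Brion~\cite{VUGen} and Knop~\cite{WeylMom}, and no argument is supplied. Your proposal to reconstruct the architecture of those proofs, while conceding that stage~(iii) must ultimately be cited, is therefore consistent with the paper's treatment; there is no proof in the paper for it to differ from.

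That said, two points in your sketch are imprecise and would need repair in an actual write-up. In stage~(ii) you infer from convexity and solidity that $\V_{G/H}$ is cut out by finitely many rational linear inequalities; that implication is false for general convex cones (a second-order cone is solid and rational but not polyhedral), so polyhedrality is a substantive fact rather than a formal consequence. In the quasi-affine setting the natural input is the tail-cone duality already recorded as Proposition~\ref{cor-tail-cone-val-cone}: $\Tm_{G/H}$ is dual to $-\V_{G/H}$, which makes convexity automatic and reduces polyhedrality to finite generation of the tail semigroup. In stage~(i), the deformation-to-the-normal-cone argument presupposes a common normal $G$-model on which both $v_1$ and $v_2$ are realized by prime divisors; in the Luna--Vust framework the existence of such a simple $G$-model already encodes that $v_1$ and $v_2$ lie in a common (colored) cone, so as phrased the argument is close to circular when convexity of $\V_{G/H}$ is precisely what is at issue. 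Neither gap puts you at odds with the paper, since the paper also does not prove the theorem, but both steps are less elementary than the sketch suggests.
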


The set of simple roots $\{\beta_1, \ldots, \beta_\ell\}$ is called the system of {\it spherical roots} of $G/H$.
This Weyl group of the spherical root system is also called the {\it little Weyl group of $G/H$}. 

Henceforth, we additionally assume that the spherical variety $G/H$ is quasi-affine.
\begin{Rem}\label{remark: not severe}
This assumption is not severe. 
Following \cite{Akhiezer-rk} one can show that, by adding an extra $\C^*$ component if necessary, we can assume that $G/H$ is quasi-affine. More precisely, let $\tilde{G} = G \times \C^*$. Take a character $\chi: H \to \C^*$ and define $\tilde{H} = \{ (h, \chi(h)) \mid h \in H\} \subset \tilde{G}$. The homogeneous space $\tilde{G} / \tilde{H}$ is spherical for the action of $\tilde{G}$. In fact, if $B$ is a Borel such that $BH$ is open in $G$ then $\tilde{B} \tilde{H}$ is open in $\tilde{G}$ where $\tilde{B} = B \times \C^*$. Also, one can show that for a suitable choice of a character $\chi$, the homogeneous space $\tilde{G} / \tilde{H}$ can be equivariantly embedded in some finite dimensional $\tilde{G}$-module \cite[Section 11.2]{Humphreys} and is thus quasi-affine. The projection $\tilde{G} \to G$, $(g, z) \mapsto g$, induces a $G$-equivariant morphism $\tilde{G} / \tilde{H} \to G/H$ which is a fiber bundle with fibers isomorphic to $\C^*$.  
\end{Rem} 

Assuming that $G/H$ is quasi-affine, let $\C[G/H]$ denote the ring of regular functions on $G/H$. 
When $\C[G/H]$ is considered as a $G$-representation in the natural manner, it is known that $\C[G/H]$ can be decomposed into finite-dimensional irreducible $G$-representations (see e.g. \cite[Appendix p.250]{Timashev}) so we have 
\begin{equation}\label{eq: decomp reg functions} 
\C[G/H] \cong \bigoplus_{\lambda} W_\lambda
\end{equation}
where $W_\lambda$ denotes the $\lambda$-isotypic component of $\C[G/H]$. 
When $G/H$ is quasi-affine, sphericity of $G/H$ is equivalent to the ring of regular functions $\mathcal{O}(G/H)$ being a multiplicity-free $G$-module \cite[Theorem 25.1(MF5)]{Timashev}. 
The decomposition above motivates the following definition. 

\begin{Def}\label{def: Lambda plus} 
Assuming that $G/H$ is quasi-affine, 
we define $\Lambda^+_{G/H}$ to be the set of highest weights in $\Lambda^+$ which appear in the decomposition~\eqref{eq: decomp reg functions}, i.e., the set of $\lambda$ for which $W_\lambda \neq 0$ in the RHS of~\eqref{eq: decomp reg functions}. Thus, by definition we have 
$\C[G/H] \cong \oplus_{\lambda \in \Lambda^+_{G/H}} W_\lambda$.  Moreover, since $G/H$ is spherical, each $W_\lambda$ for $\lambda \in \Lambda^+_{G/H}$ is an irreducible $G$-module. 
\end{Def} 

We denote by $\Lambda_{G/H}$ the sublattice of $\Lambda$ generated by $\Lambda^+_{G/H}$. There is an analogue of the usual dominant order for the sublattice $\Lambda_{G/H}$, defined as follows. 
Let $\lambda, \mu \in \Lambda_{G/H}$. We say that $\lambda \geq_{G/H} \mu$ if $\mu - \lambda$ is 
a linear combination of the spherical roots with nonnegative coefficients. 
We call $>_{G/H}$ the {\it spherical dominant order}.
One has the following \cite[Section 5]{Knop-LV}. 

\begin{Th} \label{th-coor-ring-multiplication-spherical}
Let $G/H$ be a quasi-affine spherical homogeneous space. 
Let $\C[G/H] \cong \bigoplus_{\lambda \in \Lambda_{G/H}^+} W_\lambda$ be the ring of regular functions on $G/H$.
Let $f \in W_\lambda$, $g \in W_\mu$. Then the product $fg$ lies in 
$$\bigoplus_{\gamma \geq_{G/H} \lambda + \mu} W_\gamma.$$
\end{Th}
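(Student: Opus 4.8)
\textbf{Proof proposal for Theorem~\ref{th-coor-ring-multiplication-spherical}.}

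The plan is to argue using the $B$-eigenfunction structure of the isotypic components together with the description of $\V_{G/H}$ from Theorem~\ref{th-val-cone-co-simplicial}, and the fact (Theorem~\ref{theorem: Luna-Vust}) that $G$-invariant valuations are detected on $B$-eigenfunctions. First I would reduce to the case of highest weight vectors: write $f = \sum$ over the $B$-eigenfunctions in $W_\lambda$, but actually the cleaner reduction is to note that $W_\lambda$ is spanned by $G$-translates of its (unique up to scalar) $B$-highest weight vector $f_\lambda$, and similarly $W_\mu = \langle G \cdot g_\mu\rangle$; so $fg$ is a sum of $G$-translates of products $f_\lambda' g_\mu'$ of $B$-eigenvectors of weights $\lambda,\mu$ in $W_\lambda, W_\mu$. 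Since the claimed containment $\bigoplus_{\gamma \geq_{G/H}\lambda+\mu} W_\gamma$ is a $G$-submodule, it suffices to show the product of a $B$-highest weight vector $f_\lambda \in W_\lambda$ and a $B$-highest weight vector $g_\mu \in W_\mu$ lies in it; such a product is itself a $B$-eigenfunction of weight $\lambda+\mu$.

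Next, I would characterize the submodule $\bigoplus_{\gamma \geq_{G/H}\lambda+\mu}W_\gamma$ intrinsically. A function $h \in \C[G/H]$ lies in this submodule if and only if every $B$-isotypic weight $\gamma$ appearing in $h$ satisfies $\gamma \geq_{G/H}\lambda+\mu$, i.e. $\gamma - (\lambda+\mu)$ is a nonnegative combination of spherical roots. Using the pairing with $\V_{G/H}$ and Theorem~\ref{th-val-cone-co-simplicial}, the condition ``$\gamma - (\lambda+\mu)$ is a nonnegative combination of spherical roots'' can be rephrased: by the co-simplicial/fundamental-domain description, the spherical roots $\beta_i$ together span the edges dual to $\V_{G/H}$, so $\gamma - (\lambda+\mu)$ being a nonnegative combination of them is equivalent to $\langle v, \gamma\rangle \geq \langle v, \lambda+\mu\rangle$ for all $v \in \V_{G/H}$. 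Thus it suffices to show that for every $G$-invariant valuation $v$ and every $B$-eigenfunction component $h$ of $f_\lambda g_\mu$ of weight $\gamma$, one has $\langle v, \gamma\rangle \geq \langle v,\lambda\rangle + \langle v,\mu\rangle$. But $f_\lambda g_\mu$ is a single $B$-eigenfunction of weight exactly $\lambda+\mu$ (product of eigenfunctions), so in fact I must instead expand $f_\lambda g_\mu$ in the basis adapted to the decomposition $\C[G/H] = \bigoplus W_\gamma$ and control the weights $\gamma$ of the $W_\gamma$-components that occur — here I use that within each $W_\gamma$ the highest $B$-weight is $\gamma$ itself and all other $B$-weights of $W_\gamma$ are $\leq \gamma$ in the ordinary dominant order, combined with the valuation inequality $v(f_\lambda g_\mu) = v(f_\lambda) + v(g_\mu)$. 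The key computation is: for a $B$-highest weight vector $f_\lambda$, $v(f_\lambda) = -\langle v, \lambda\rangle$ (or $+$, depending on sign conventions; this is exactly $\rho(v)(\lambda)$ up to sign), so $v(f_\lambda g_\mu) = -\langle v, \lambda+\mu\rangle$; and if $W_\gamma$ appears in $f_\lambda g_\mu$ then $v$ of the corresponding component is $-\langle v, \gamma\rangle$ on its highest weight vector, forcing (via $v(h) \geq \min$ of the $v$ of the summands, applied carefully to isolate components) the inequality $-\langle v,\gamma\rangle \geq -\langle v, \lambda+\mu\rangle$ for all $v \in \V_{G/H}$, i.e. $\gamma \geq_{G/H}\lambda+\mu$.

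The main obstacle I anticipate is making the last step rigorous: extracting, from the single inequality $v(f_\lambda g_\mu) = -\langle v,\lambda+\mu\rangle$, the componentwise conclusion that \emph{each} $W_\gamma$ occurring in the decomposition of $f_\lambda g_\mu$ has $\gamma \geq_{G/H}\lambda+\mu$. Knowing $v$ of a sum does not immediately bound $v$ of each summand. The fix is to use that $v$ extends to a valuation whose associated graded ring is still $\Lambda_{G/H}^+$-graded (this is part of Luna--Vust theory): a $G$-invariant valuation $v$ induces a filtration of $\C[G/H]$ compatible with the $G$-isotypic decomposition, so $v(f_\lambda g_\mu) = \min_{\gamma} (v \text{ on the } W_\gamma\text{-component})$ with the minimum attained, and one can compare the leading term. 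Alternatively — and this is probably the cleanest route — I would cite that the spherical dominant order is precisely the order making $\C[G/H]$ into a filtered algebra with respect to \emph{all} $G$-invariant valuations simultaneously (Knop's results), so the statement is really that $f_\lambda g_\mu$, having weight $\lambda+\mu$, sits at the bottom of the filtration indexed by $\lambda+\mu$, and its expansion only involves $W_\gamma$ with $\gamma$ no smaller. I would look to \cite{Knop-LV} Section 5 for the precise filtration statement and quote it, since reproving it from scratch is beyond what this lemma needs.
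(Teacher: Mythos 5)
First, note that the paper itself does not prove this theorem: it is quoted from Knop's Luna--Vust survey (the text reads ``One has the following \cite[Section 5]{Knop-LV}''), so there is no in-paper proof to compare your argument against.

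Your proposal has a fatal gap at the very first reduction. You claim that, since $W_\lambda = \langle G\cdot f_\lambda\rangle$ and $W_\mu = \langle G\cdot g_\mu\rangle$, the product $fg$ is a sum of $G$-translates of $f_\lambda g_\mu$, so it suffices to treat $f_\lambda g_\mu$. That is false: writing $f = \sum_i a_i(g_i\cdot f_\lambda)$ and $g = \sum_j b_j(h_j\cdot g_\mu)$ gives $fg = \sum_{i,j}a_ib_j(g_i\cdot f_\lambda)(h_j\cdot g_\mu)$, and the cross terms with $g_i\neq h_j$ are \emph{not} translates of $f_\lambda g_\mu$. In terms of the $G$-equivariant multiplication map $W_\lambda\otimes W_\mu\to\C[G/H]$, the $G$-module generated by $f_\lambda\otimes g_\mu$ is only the Cartan component $V_{\lambda+\mu}\subsetneq W_\lambda\otimes W_\mu$; the content of the theorem is precisely what happens to the \emph{other} irreducible constituents. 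Indeed, if your reduction were valid the conclusion would come out much too strong: $f_\lambda g_\mu$ is a $B$-eigenfunction of weight $\lambda+\mu$, and since $G/H$ is spherical the map $\C(G/H)^{(B)}/\C^*\to\Lambda_{G/H}$ is injective, so $f_\lambda g_\mu$ lies entirely in $W_{\lambda+\mu}$ and there is nothing to expand. Your argument would then give $W_\lambda W_\mu\subseteq W_{\lambda+\mu}$, which holds only in the horospherical case (cf.\ Lemma~\ref{lem-multiplication and tails} and Example~\ref{ex-sph-functions-not-multiplicative}). The real work — which you postpone to the third paragraph and ultimately propose to cite from Knop anyway, as the paper does — is controlling products of \emph{non}-highest-weight vectors; that is where the $G$-invariant valuation argument has to be carried out on arbitrary $f\in W_\lambda$, $g\in W_\mu$, not on the Cartan component alone. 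Two smaller issues: the pairing is $\langle v,\lambda\rangle=\rho(v)(\lambda)$, the value of $v$ on a $B$-eigenfunction of weight $\lambda$, with no sign; and in the paper's convention $\gamma\geq_{G/H}\lambda+\mu$ means $(\lambda+\mu)-\gamma$, not $\gamma-(\lambda+\mu)$, is a nonnegative combination of spherical roots.
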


Let $\lambda, \mu \in \Lambda_{G/H}^+$ and let $W_\nu$ appear in the product $W_\lambda W_\mu$, where here $W_\lambda W_\mu$ denotes the span of all products of the form $fg$ for $f \in W_\lambda, g \in W_\mu$. 
For such a triple $\lambda, \mu, \nu \subset \Lambda_{G/H}^+$, we call the weight $\lambda + \mu - \nu$ a \textbf{tail}. 
The \textbf{tail cone} $\Tm_{G/H}$ \textbf{of $G/H$} is defined to be the closure of the cone 
in $\Lambda_{G/H} \otimes \Q$ generated by all the tails. We have
the following (see e.g. \cite[Lemma 5.1]{Knop-LV}). 

\begin{Prop} \label{cor-tail-cone-val-cone}
Let $G/H$ be a quasi-affine spherical homogeneous space. Then the tail cone $\Tm_{G/H}$ is the dual cone to the (negative of the) valuation cone, $-\V_{G/H}$. 
\end{Prop}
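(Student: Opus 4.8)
The plan is to show the two cones are dual to each other by comparing their defining inequalities, using Theorem~\ref{th-coor-ring-multiplication-spherical} together with the description of $\V_{G/H}$ from Theorem~\ref{th-val-cone-co-simplicial}. Recall that for cones $C \subseteq \Lambda_{G/H}\otimes\Q$ and $C^\vee \subseteq \mathcal{Q}_{G/H}$, duality means $C^\vee = \{ v : \langle v, \gamma\rangle \geq 0 \text{ for all } \gamma \in C\}$ (with the appropriate sign convention). So I want to identify $\Tm_{G/H}$ — the closed cone generated by all tails $\lambda + \mu - \nu$ — with the dual of $-\V_{G/H}$.

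First I would recall, from the definition of the spherical dominant order, that $\nu \geq_{G/H} \lambda+\mu$ means $\lambda+\mu-\nu$ is a nonnegative combination of the spherical roots $\beta_1,\dots,\beta_\ell$; by Theorem~\ref{th-coor-ring-multiplication-spherical} every tail has this form, so the tail cone is contained in the cone $\sigma := \operatorname{cone}(\beta_1,\dots,\beta_\ell)$ generated by the spherical roots. The heart of the argument is then the reverse inclusion: each spherical root $\beta_i$ is itself a tail (or a positive rational multiple of one). For this I would invoke the standard fact from the Luna–Vust / Knop theory — e.g. via the $S$-varieties or the structure results in \cite{Knop-LV} — that the spherical roots arise precisely from the ``leading'' tails in multiplication in $\C[G/H]$, so that $\operatorname{cone}(\text{tails}) = \sigma$ after taking closures. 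Granting $\Tm_{G/H} = \sigma$, the conclusion is then immediate from Theorem~\ref{th-val-cone-co-simplicial}: that theorem says $\V_{G/H} = \{v : \langle v, \beta_i\rangle \leq 0,\ \forall i\}$, hence $-\V_{G/H} = \{v : \langle v, \beta_i\rangle \geq 0,\ \forall i\}$, and a cone of the form $\{v : \langle v, \beta_i\rangle \geq 0\}$ has dual cone exactly $\operatorname{cone}(\beta_1,\dots,\beta_\ell)$ by Farkas/biduality for polyhedral cones. Therefore $\Tm_{G/H} = \sigma = (-\V_{G/H})^\vee$, as claimed.

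The main obstacle I expect is precisely the reverse inclusion $\sigma \subseteq \Tm_{G/H}$, i.e. showing that the tails actually span the full cone over the spherical roots and not some smaller face of it. One direction (every tail is a nonnegative combination of spherical roots) is handed to us directly by Theorem~\ref{th-coor-ring-multiplication-spherical}, but the fact that one can realize each $\beta_i$ (up to scaling) as an honest difference $\lambda+\mu-\nu$ with $W_\nu$ appearing in $W_\lambda W_\mu$ is the substantive input and is exactly where the deeper structure theory of spherical varieties (the identification of the little Weyl group, or Knop's analysis of the asymptotic cone / associated monoid) enters. I would therefore either cite the relevant statement in \cite{Knop-LV} (the cited Lemma~5.1 presumably packages exactly this) or, if a self-contained argument is wanted, reduce to the wonderful/affine case and use the known explicit description of generators of the monoid $\Lambda^+_{G/H}$ and the structure of $\C[G/H]$ as a filtered algebra whose associated graded is the semigroup algebra of the monoid $\{(\lambda,\mu,\nu)\}$.

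A minor point to handle carefully is the sign convention: Proposition~\ref{cor-tail-cone-val-cone} phrases the duality with respect to $-\V_{G/H}$ rather than $\V_{G/H}$, so I would make sure the pairing $\langle v,\lambda\rangle = \rho(v)(\lambda)$ is used consistently with the inequality $\langle v,\beta_i\rangle \leq 0$ from Theorem~\ref{th-val-cone-co-simplicial}, and that ``dual cone'' is taken in the convention $\{v : \langle v,\cdot\rangle \geq 0\}$. The closures in the definition of $\Tm_{G/H}$ and in the statement of Theorem~\ref{th-val-cone-co-simplicial} also need a brief remark, but since all cones in sight are finitely generated (hence closed) this is harmless. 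Once these bookkeeping items are settled, the proof is a short formal deduction from the two cited theorems.
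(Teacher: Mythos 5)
Your outline is logically sound, but it is worth pointing out that the paper does not actually give a proof here: the statement is recorded with a citation to \cite[Lemma 5.1]{Knop-LV} and nothing more, so there is no ``paper's approach'' to compare against in detail. What you do is unpack what such a proof would have to look like, and you do so correctly. The inclusion $\Tm_{G/H} \subseteq \operatorname{cone}(\beta_1,\dots,\beta_\ell)$ does follow directly from Theorem~\ref{th-coor-ring-multiplication-spherical} and the definition of the spherical dominant order, and the duality bookkeeping via Theorem~\ref{th-val-cone-co-simplicial} is routine once one fixes the convention $C^\vee = \{v : \langle v,\cdot\rangle \geq 0\}$, exactly as you describe. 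You also correctly identify that the substantive content is the reverse inclusion $\operatorname{cone}(\beta_1,\dots,\beta_\ell) \subseteq \Tm_{G/H}$, and that this is the part one must pull from the Luna--Vust/Knop structure theory; since the paper defers the entire statement to \cite{Knop-LV}, your deferral of that single step to the same source is entirely consistent.

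One small caution on phrasing: you assert that each $\beta_i$ ``is itself a tail (or a positive rational multiple of one).'' Since $\Tm_{G/H}$ is defined as the \emph{closure} of the cone generated by all tails (of which there may be infinitely many), what is actually needed, and what the structure theory delivers, is that each ray $\R_{\geq 0}\beta_i$ lies in that closed cone; it need not be spanned by a single tail. This is harmless for your argument but worth stating precisely if you write the proof out, since it is exactly the distinction between ``the $\beta_i$ are tails'' and ``the closed tail cone has the $\beta_i$ as its extremal rays.''
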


\begin{Rem} \label{rem-tail-cone-sph-roots}
From Theorem~\ref{th-val-cone-co-simplicial} and Theorem \ref{th-coor-ring-multiplication-spherical}, we see that the set of spherical roots generates the tail cone $\Tm_{G/H}$ of $G/H$. 
\end{Rem}

\subsection{Spherical tropicalization}\label{subsec: spherical trop}

We start by recalling the notions of a germ of a curve and a formal curve (see for example \cite[Section 24]{Timashev}). 
We let $\mathcal{O} = \C[[t]]$ denote the algebra of formal power series with coefficients in $\C$ and 
$\K = \C((t))$ its field of fractions, i.e. the field of formal Laurent series with finitely many negative exponents. If $f \in \K$ we denote by $\ord_t(f)$ the order of $t$ in the Laurent series $f$. 
Clearly $\ord_t$ is a $\Z$-valued valuation on the field $\K$.

Let $X$ be a variety. 
 A {\it formal curve} $\gamma$ on $X$ is a $\K$-point of $X$. An $\mathcal{O}$-point on $X$ is called a {\it convergent formal curve}. The limit of a convergent formal curve is the point on $X(\C)$ obtained by setting $t=0$ in $\gamma$. 
If we assume $X$ is embedded in an affine space $\A^N$ 
then a formal curve $\gamma$ on $X$ is an $N$-tuple of Laurent series satisfying the defining equations of $X$ in $\A^N$. If $\gamma$ is convergent then
its coordinates are power series and their constant terms are the coordinates of the limit point $\gamma_0 = \lim_{t \to 0} \gamma(t)$. 

\begin{Def}[Valuation associated to a formal curve] \label{def-val-formal-curve}
A formal curve $\gamma$ on $X$ defines a valuation $\hat{v}_\gamma: \C(X)\setminus \{0\} \to \Z \cup \{\infty\}$ as follows. 
\begin{equation} \label{equ-val-formal-curve}
\hat{v}_\gamma(f) = \ord_t(f(\gamma(t))).
\end{equation}
\end{Def}

We recall that the algebraic closure $\overline{\K}$ of the field $\K$ is the field of formal Puiseux series with coefficients in $\C$, the elements of which are formal series 
\[
\gamma(t) = c_1 t^{a_1} + c_2 t^{a_2} + c_3 t^{a_3} + \cdots 
\]
where the $c_i$ are non-zero complex numbers for all $i$, and $a_1 < a_2 < a_3 < \cdots $ are rational numbers that have a common denominator. (Sometimes $\overline{\K}$ is denoted as $\C\{\{t\}\}$ in the literature.) We call a point in $X(\overline{\K})$ a {\it formal Puiseux curve} or simply a {\it Puiseux curve} on $X$. Definition~\ref{def-val-formal-curve} extends naturally to Puiseux curves. That is, a formal Puiseux curve $\gamma$ on $X$ gives a valuation $\hat{v}_\gamma: \C(X) \to \Q \cup \{\infty\}$, defined by the same equation 
\eqref{equ-val-formal-curve}.

Now we restrict attention to the case of spherical varieties and $G$-invariant valuations. Following the setting in the previous section, we assume $G/H$ is a quasi-affine spherical homogeneous space.
The main ingredient in the definition of spherical tropicalization, Definition~\ref{def: strop} below, is the construction of a $G$-invariant valuation from a given arbitrary valuation on ${G/H}$.
The following well-known result is key (see \cite[Lemma 1.4]{Knop-LV}, \cite[Lemma 10 and 11]{Sumihiro}, \cite[3.2 Lemme]{Luna-Vust}).

\begin{Th}[Sumihiro] \label{th-Sumihiro}
Let $G/H$ be a quasi-affine spherical homogeneous space. Let $v: \C({G/H}) \setminus \{0\} \to \Q$ be a valuation. 
\begin{itemize}
\item[(1)] For every $0 \neq f \in \C({G/H})$, there exists a nonempty Zariski-open subset $U_f \subset G$ such that the value $v(g \cdot f)$ is the same 
for all $g \in U_f$. We denote this value by $\bar{v}(f)$, i.e.
$$\bar{v}(f) := v(g \cdot f), \quad \forall g \in U_f.$$
\item[(2)] For $\bar{v}$ defined as above, we have $\bar{v}(f) = \min\{ v(g \cdot f) \mid g \in G \}$.
\item[(3)] $\bar{v}$ is a $G$-invariant valuation on ${G/H}$. 
\end{itemize}
\end{Th}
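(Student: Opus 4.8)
## Proof Proposal for Theorem (Sumihiro)

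The plan is to prove the three statements essentially in order, building $\bar{v}$ explicitly and then verifying it has the required properties. First I would fix $0 \neq f \in \C(G/H)$ and consider the orbit map sending $g \in G$ to the translate $g \cdot f \in \C(G/H)$. The key structural input is that $f$ lies in a finite-dimensional $G$-submodule $M_f \subseteq \C(G/H)$ (this holds because $G/H$ is quasi-affine and $G$ acts rationally, so every rational function lies in a finite-dimensional $G$-stable subspace of the function field, after clearing a suitable denominator which is itself a $B$-eigenfunction — here one uses sphericity). Choose a basis $f = f_1, f_2, \ldots, f_m$ of $M_f$. For $g$ in a fixed Zariski-open subset of $G$ one has $g \cdot f = \sum_i a_i(g) f_i$ with the $a_i$ regular on that open set, and then $v(g\cdot f) \geq \min_i\{v(f_i) : a_i(g) \neq 0\}$, with equality unless there is a ``coincidental'' cancellation among the leading terms. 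The point of part (1) is that the locus where such cancellation occurs is a proper Zariski-closed subset: the value $v(g\cdot f)$, as a function of $g$, is upper semicontinuous and constructible, hence constant on a dense open set $U_f$. I would make this precise by induction on $\dim M_f$ or by a direct argument using that $\{g : v(g\cdot f) \geq k\}$ is Zariski-closed (a union of vanishing loci of finitely many $a_i$) for each $k$, and these stabilize.

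For part (2), once $\bar v(f) = v(g_0\cdot f)$ for generic $g_0$, I would show $\bar v(f) \leq v(g\cdot f)$ for \emph{every} $g \in G$. The cleanest route: given arbitrary $g$, the set $\{h \in G : v((hg)\cdot f) = \bar v(f)\}$ is dense open (it is $U_f g^{-1}$), so pick $h$ in it with also $hg \in U_{g\cdot f}$ appropriately; more directly, apply part (1) to $g \cdot f$ in place of $f$ — generic translates of $g\cdot f$ achieve the same generic value $\bar v(f)$ — and then use the general inequality $v(g'\cdot(g\cdot f)) \ge$ something, or simply observe that since $U_f$ is dense and $G$ is irreducible, for any $g$ we can find $g' \in U_f$ arbitrarily ``close'' and compare. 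The slickest formulation is: $v(g \cdot f) \ge \min\{v(f_i)\}$ pointwise in $g$ where the $f_i$ span $M_f = M_{g\cdot f}$, while $\bar v(f)$ equals this minimum because generic $g$ realizes it; hence $\bar v(f) = \min_{g \in G} v(g\cdot f)$.

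For part (3), I would check the valuation axioms for $\bar v$ one at a time. Multiplicativity $\bar v(fg) = \bar v(f) + \bar v(g)$: for generic $h$, $h\cdot(fg) = (h\cdot f)(h\cdot g)$ and $h$ simultaneously generic for $f$ and $g$ (intersection of two dense opens), so $\bar v(fg) = v(h\cdot f) + v(h\cdot g) = \bar v(f) + \bar v(g)$. The ultrametric inequality $\bar v(f+g) \ge \min\{\bar v(f), \bar v(g)\}$: using part (2), for any $h$, $v(h\cdot(f+g)) \ge \min\{v(h\cdot f), v(h\cdot g)\} \ge \min\{\bar v(f), \bar v(g)\}$, and taking the min over $h$ gives the claim. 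Triviality on constants and the value-group condition are inherited directly. Finally $G$-invariance: for $g_0 \in G$, $\bar v(g_0 \cdot f) = \min_{g}v(gg_0\cdot f) = \min_{g'}v(g'\cdot f) = \bar v(f)$ since right translation by $g_0$ permutes $G$.

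The main obstacle is part (1) — specifically, establishing that $v(g \cdot f)$ is \emph{constructible and upper semicontinuous} in $g$ so that it is generically constant, and before that, pinning down that $f$ lies in a finite-dimensional $G$-module (which requires writing $f$ as a ratio of elements of $\C[G/H]$ and using the multiplicity-free decomposition, or invoking local finiteness of the $G$-action on $\C(G/H)$ modulo denominators). Once the finite-dimensional model $M_f$ is in hand, the semicontinuity is a standard argument about orders of vanishing varying in families, but it deserves care because $v$ is an abstract valuation, not an order along a fixed divisor a priori; I would handle this by reducing to the case where $v$ is geometric (order along a divisor on some model), which is legitimate here, or by the purely formal stabilization argument on the descending chain of closed sets $\{g : v(g\cdot f) \ge k\}$.
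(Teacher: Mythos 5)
The paper does not prove this theorem; it cites it as well known (Knop, Sumihiro, Luna--Vust) and moves on. So there is no proof in the paper to compare against, and I will assess your proposal on its own terms. The overall strategy — local finiteness of the $G$-action, the ultrametric filtration of a finite-dimensional $G$-module, and a generic-position argument to pin down $\bar v$ — is the standard and correct approach. However, there are two genuine problems in how you have set it up.

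First, your ``key structural input'' is wrong as stated: a rational function $f \in \C(G/H)$ does \emph{not} in general lie in a finite-dimensional $G$-stable subspace of $\C(G/H)$, because the function field is not a rational $G$-module (already for $G = \C^\ast$ acting on itself, the translates of $1/(x-1)$ span an infinite-dimensional space). The correct reduction is to use quasi-affineness to write $f = p/q$ with $p, q \in \C[G/H]$; the coordinate ring \emph{is} a rational $G$-module, so $p$ and $q$ each lie in finite-dimensional $G$-submodules $M_p$, $M_q$. You then run the filtration/semicontinuity argument separately for $p$ and $q$, take $U_f := U_p \cap U_q$, and set $\bar v(f) := \bar v(p) - \bar v(q)$. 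Your parenthetical about clearing a denominator ``which is itself a $B$-eigenfunction — here one uses sphericity'' is a red herring: sphericity plays no role in this theorem, and the denominator need not be a $B$-eigenfunction; only quasi-affineness (so that $\C(G/H)$ is the fraction field of $\C[G/H]$) is used.

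Second, your ``slickest formulation'' of part (2) — that $\bar v(f) = \min_{g} v(g\cdot f)$ because $v(g \cdot f) \ge \min_i v(f_i)$ pointwise — tacitly uses that $f$ itself is an element of a finite-dimensional $G$-module, i.e.\ that $f$ is (a scalar multiple of) a regular function. For a genuinely rational $f$ the minimum formula can \emph{fail}. Concretely, take $G/H = \SL_2(\C)/U \cong \A^2\setminus\{0\}$ with $\C[G/H]=\C[x,y]$, let $v$ be the order of vanishing along $\{x=0\}$, and set $f = x/(x+y)$. The generic value of $v(g\cdot f)$ is $0$ (both numerator and denominator generically have $v=0$), so $\bar v(f) = 0$; but for $g$ with $g^{-1}=\begin{psmallmatrix}0&-1\\1&1\end{psmallmatrix}$ one has $g\cdot f = -y/x$, so $v(g\cdot f) = -1 < \bar v(f)$. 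Thus $\bar v(f) \ne \min_g v(g\cdot f)$ for this $f$. (This is a genuine imprecision in the statement as given in the paper: part (2) should be asserted only for $f$ lying in a finite-dimensional $G$-submodule of $\C(G/H)$, equivalently for regular $f$, which is also how Knop's Lemma 1.4 is stated.) Since your proofs of the ultrametric inequality and of $G$-invariance in part (3) both invoke part (2), they inherit this gap; the fix is easy — simply pick $h$ in the intersection $U_{f_1}\cap U_{f_2}\cap U_{f_1+f_2}$ (nonempty since $G$ is irreducible) and use $\bar v(f_1+f_2) = v(h\cdot(f_1+f_2)) \ge \min\{v(h\cdot f_1),v(h\cdot f_2)\} = \min\{\bar v(f_1),\bar v(f_2)\}$ — but as written your argument is circular on the problematic case.

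For the record, the part of your proposal that works cleanly is the semicontinuity/stabilization idea for part (1) applied to a regular function $p$: the subsets $M_{\ge k} := \{h \in M : v(h) \ge k\} \cup \{0\}$ of a finite-dimensional $M = \operatorname{span}(G\cdot p)$ are linear subspaces by the ultrametric inequality, hence there are only finitely many distinct ones and finitely many values; $\{g : v(g\cdot p)\ge k\}$ is the preimage of a subspace under the morphism $g\mapsto g\cdot p$, hence closed; and since $G\cdot p$ spans $M$, the generic value equals $\min\{v(h): 0\ne h\in M\}$ and is attained on a dense open. This does not require reducing $v$ to a geometric valuation, as you correctly suspect. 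Once parts (1) and (2) are in place for regular functions, extending $\bar v$ to $\C(G/H)$ by $\bar v(p/q) := \bar v(p) - \bar v(q)$ and verifying the axioms as you do (but using generic $h$, not the min formula) completes the proof.
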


Recall that a formal curve $\gamma$ on ${G/H}$ gives rise to a valuation $\hat{v}_\gamma$. 
We let $v_\gamma$ denote the $G$-invariant valuation constructed by Theorem~\ref{th-Sumihiro} from $\hat{v}_\gamma$. Concretely, 
$$v_\gamma(f) = \ord_t(f(g \cdot \gamma(t)))$$ for every $f \in \C(G/H) \setminus \{0\}$ and $g \in U_f$ is the 
dense open set in $G$ given in Theorem~\ref{th-Sumihiro}(1). By Theorem~\ref{th-Sumihiro} we know $v_\gamma$ is a $G$-invariant valuation, i.e., $v_\gamma \in \V_{G/H}$. 

\begin{Def} \label{def: strop} 
Let $G/H$ be a quasi-affine spherical homogeneous space. Following \cite{Vogiannou}, we call the map $$\strop: G/H(\overline{\K}) \to \V_{G/H}, \quad \gamma \mapsto v_\gamma,$$ the {\it spherical tropicalization map}. 
\end{Def} 

As in Section~\ref{sec-prelim}, it follows from Luna-Vust theory that the map $\strop$ is surjective. 

\begin{Ex} \label{ex-SL(2)-inv-val}
Let $X = \A^2 \setminus \{(0,0)\}$, equipped 
with the natural action of $G = \SL(2, \C)$ as in Example \ref{ex-sph-var}(3). Thus $X \cong \SL_2(\C)/U$ and 
the algebra of regular functions $\C[X] = \C[\A^2 \setminus \{(0,0)\}$ is the polynomial ring $\C[x, y]$. 
It is not difficult to see that $\Lambda_{G/H}$ coincides with the weight lattice $\Lambda \cong \Z$ of $SL_2(\C)$. Indeed, the function $f(x,y) = y$ on $X =\A^2 \setminus \{(0,0)\}$ is a $B$-eigenfunction in $\C[X]=\C[x,y]$ whose $B$-weight is $1$. 
Let $\gamma = (\gamma_1, \gamma_2)$ be a formal curve in $X = \A^2 \setminus \{0\}$, where 
$\gamma_1(t) = \sum_i a_i t^i$ and $\gamma_2(t) = \sum_i b_i t^i$ are elements of $\mathcal{K} = \C((t))$.  Let $g = \left[ \begin{matrix} g_{11} & g_{12} \\ g_{21} & g_{22} \end{matrix} \right]$. 
We compute that $f(g \cdot \gamma(t)) = g_{21} \gamma_1 + g_{22} \gamma_2$. 
From the construction of the $G$-invariant valuation $v_\gamma$ above, we know $v_\gamma(y) = \ord_t(f(g \cdot \gamma(t)))$ for $g \in U_f$ for some Zariski-open subset. It follows that
\begin{equation} \label{equ-v-gamma-A^2}
v_\gamma(y) = \min(\ord_t(\gamma_1(t)), \ord_t(\gamma_2(t)).
\end{equation}
\end{Ex}

There is another way of understanding the $G$-invariant valuation associated to a formal curve and that is through the generalized Cartan decomposition for spherical varieties.
It goes back to Luna and Vust (\cite{Luna-Vust}). 
A proof of it can also be found in \cite[Theorem 8.2.9]{Gaitsgory-Nadler}.
\begin{Th}[Generalized non-Archimedean Cartan decomposition for spherical varieties over $\K$]  \label{th-non-Arch-Cartan-decomp} 
The $G(\O)$-orbits in ${G/H}(\K)$ are parameterized by $\check{\Lambda}_{G/H} \cap \V_{G/H}$. Here 
$\check{\Lambda}_{G/H} \subset \mathcal{Q}_{G/H}$ denotes the lattice dual to the $B$-weight lattice $\Lambda_{G/H}$, and a cocharacter 
$\lambda \in \check{\Lambda}_{G/H}$ 
corresponds to the $G(\O)$-orbit through the formal curve $\lambda(t) \in T_{G/H}(\K)$.
\end{Th}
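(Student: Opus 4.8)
The plan is to define the parametrization by the assignment $\gamma \mapsto v_\gamma$ already introduced above (Definition~\ref{def: strop}, applied to formal rather than Puiseux curves) and to show it descends to a bijection $G(\O)\backslash (G/H)(\K) \;\longrightarrow\; \check{\Lambda}_{G/H}\cap\V_{G/H}$. There are three things to establish — that the map is well defined with the stated target, that it is surjective, and that it is injective — and the injectivity is the real content. For the first, $v_\gamma\in\V_{G/H}$ by Theorem~\ref{th-Sumihiro}, while $\hat v_\gamma=\ord_t\circ(\,\cdot\,)(\gamma(t))$ is $\Z$-valued and $v_\gamma(f)=\min_{g\in G}\hat v_{g\gamma}(f)$, so the restriction of $v_\gamma$ to $\C(G/H)^{(B)}/\C^*\cong\Lambda_{G/H}$ is an integral functional; hence $v_\gamma\in\check{\Lambda}_{G/H}\cap\V_{G/H}$. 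That $v_{g\gamma}=v_\gamma$ for $g\in G(\O)$ follows because $v_\gamma$ restricted to any finite set of $B$-eigenfunctions (a $\Z$-basis of $\Lambda_{G/H}$ suffices) is already computed by left-translating $\gamma$ by a generic \emph{constant} element of $G(\C)\subseteq G(\O)$ (Theorem~\ref{th-Sumihiro}(1)), and further left multiplication by a fixed $g(t)\in G(\O)$ changes neither which constant translates are generic nor the resulting minimal $t$-orders.

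For surjectivity I would invoke the local structure theorem: writing $X^\circ := B\cdot eH\subseteq G/H$ for the open $B$-orbit, there is a parabolic $P\supseteq B$ (the stabilizer of $X^\circ$) and an isomorphism $X^\circ \cong R_u(P)\times T_{G/H}$ under which $T$ acts on the torus factor $T_{G/H}=T/S$ through its projection. Given $\lambda\in\check{\Lambda}_{G/H}\cap\V_{G/H}$, regarded as a cocharacter of $T_{G/H}$, form the formal curve $\gamma_\lambda(t):=(e,\lambda(t))\in X^\circ(\K)\subseteq(G/H)(\K)$; this is the curve ``$\lambda(t)\in T_{G/H}(\K)$'' of the statement. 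For a $B$-eigenfunction $f$ of weight $\mu$ one computes $f(\gamma_\lambda(t))=c\,t^{\langle\lambda,\mu\rangle}$ with $c\ne0$, so $\hat v_{\gamma_\lambda}(f)=\langle\lambda,\mu\rangle$; combining Theorem~\ref{th-Sumihiro}(2) (which gives $\langle v_{\gamma_\lambda},\mu\rangle\le\langle\lambda,\mu\rangle$ for all $\mu$) with the hypothesis $\lambda\in\V_{G/H}$ forces $v_{\gamma_\lambda}=\lambda$. The valuation-cone hypothesis is exactly what is needed here: for $\lambda$ outside $\V_{G/H}$ the valuation $v_{\gamma_\lambda}$ is instead the unique representative of the little-Weyl-group orbit of $\lambda$ lying in $\V_{G/H}$ (Theorem~\ref{th-val-cone-co-simplicial}).

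Injectivity is where the work lies: I must show that every $\gamma\in(G/H)(\K)$ with $v_\gamma=\lambda$ lies in the $G(\O)$-orbit of $\gamma_\lambda$, via a non-Archimedean analogue of the Cartan decomposition. First translate $\gamma$ by a generic constant $g\in G(\C)\subseteq G(\O)$ so that $\hat v_\gamma$ attains its minimum $v_\gamma$ simultaneously on all $B$-eigenfunctions — only finitely many matter, so one generic translate suffices — and so that $\gamma$ lands in $X^\circ(\K)$. Writing $\gamma(t)=(u(t),\tau(t))$ in the coordinates $X^\circ\cong R_u(P)\times T_{G/H}$, this general-position normalization forces the torus component $\tau(t)$ to contribute exactly the cocharacter $\lambda$ (here $v_\gamma=\lambda$ is used) and the unipotent component to be integral, $u(t)\in R_u(P)(\O)$; since $P(\O)\subseteq G(\O)$ acts on $X^\circ(\O)$ compatibly with the product decomposition, one applies an element of $R_u(P)(\O)$ to clear $u(t)$ and an element of the $\O$-points of a maximal torus of $P$ to move $\tau(t)$ to $\lambda(t)$, arriving at $\gamma_\lambda$. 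The step I expect to be the main obstacle is establishing this simultaneous general position after a single constant translation and its compatibility with the product coordinates on $X^\circ$ — the technical heart of the argument, which is carried out in \cite[Theorem 8.2.9]{Gaitsgory-Nadler} and goes back to \cite{Luna-Vust}.
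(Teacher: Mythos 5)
The paper itself does not prove Theorem~\ref{th-non-Arch-Cartan-decomp}: it states the result, attributes it to Luna and Vust, and refers the reader to \cite[Theorem 8.2.9]{Gaitsgory-Nadler} for a proof. There is therefore no ``paper's own proof'' to compare your attempt against word for word. What I can say is that your outline follows the same general strategy one finds in the cited sources (parametrize by $\gamma\mapsto v_\gamma$, use the local structure theorem on the open $B$-orbit, and reduce to the torus factor $T_{G/H}$), and you have correctly identified that the genuine content lies in the injectivity step, which you ultimately defer to the very same references that the paper cites.

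A few places in the sketch, however, are thinner than they read and would need real work to close. The claim that $v_{g_0\gamma}=v_\gamma$ for $g_0(t)\in G(\O)$ is not an immediate consequence of Theorem~\ref{th-Sumihiro}: the Sumihiro statement lets you compute $v_\gamma$ by a generic \emph{constant} translate, but replacing $\gamma$ by $g_0(t)\gamma$ changes the curve itself, not just the test translate, and showing that the resulting $\min$ over $G(\C)$ is unchanged requires controlling how the $t$-order of $f(hg_0(t)\gamma(t))$ compares with that of $f(hg_0(0)\gamma(t))$ as $h$ varies generically. This is exactly the kind of estimate that drives the Gaitsgory--Nadler argument, not something you can wave through. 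Likewise, in the surjectivity step the assertion that for $\lambda\notin\V_{G/H}$ the invariant valuation $v_{\gamma_\lambda}$ is ``the unique representative of the little-Weyl-group orbit of $\lambda$ in $\V_{G/H}$'' is a substantive fact (essentially Knop's theorem that the little Weyl group acts on the image of $\rho$ with fundamental domain $\V_{G/H}$) and deserves a citation and a short justification of why $\hat v_{\gamma_\lambda}$ and its $G$-saturation land in a single little-Weyl orbit. Finally, the injectivity paragraph is an honest plan but not a proof: the ``simultaneous general position after a single constant translation'' and its compatibility with the local-structure coordinates is precisely the technical heart, and as you say it is carried out in \cite{Gaitsgory-Nadler}. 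In short: a correct and sensible outline, consistent with the approach of the references the paper points to, but it is a roadmap rather than a self-contained argument, and the paper itself supplies no proof to be compared with.
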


{Thus the valuation $v_\gamma$ can be interpreted as the valuation represented by the point of intersection of the $G(\mathcal{O})$-obit of $\gamma$ in 
${G/H}(\K)$ and the image of valuation cone $\V_{G/H}$ (under the exponential map) in ${G/H}(\K)$.}  

\begin{Ex}[non-Archimedean Cartan decomposition] \label{ex-non-Arch-Cartan-decomp}
As in Example \ref{ex-sph-var}(5) consider $G$ with left-right action of $G \times G$. Theorem \ref{th-non-Arch-Cartan-decomp} applied in this case recovers the
a non-Archimedean version of the usual Cartan decomposition (see \cite{Iwahori}). With notation as above, it states that:
$$G(\K) = G(\O) \cdot \check{\Lambda}^+ \cdot G(\O).$$
{Here $\check{\Lambda}$ is the cocharacter lattice and $\check{\Lambda}^+$ is the intersection of $\check{\Lambda}$ with the dual positive Weyl chamber. We regard both as subsets of $T(\K)$.} 

When $G = \GL(n, \C)$ the above non-Archimedean Cartan decomposition gives the well-known Smith normal form of a matrix (over the 
field of formal Laurent series $\K$ which is the field of fractions of the principal ideal domain $\O$, the ring of formal power series).
\end{Ex}

\begin{Ex}[Non-Archimedean Iwasawa decomposition] \label{ex-non-Arch-Iwasawa}
As in Example \ref{ex-sph-var}(4) consider the spherical homogeneous space $G/U$ where $U$ is a maximal unipotent subgroup of $G$. In this case 
Theorem \ref{th-non-Arch-Cartan-decomp} gives a non-Archimdean version of the Iwasawa decompostion (see \cite{Iwahori}). It states that: 
$$G(\K) = G(\O) \cdot \check{\Lambda} \cdot U(\K),$$
where as in the previous example, $\check{\Lambda}$ is the dual lattice to the weight lattice $\Lambda$ and we regard it as a subset of $T(\K)$.
\end{Ex}

\section{Spherical functions, the spherical logarithm, and separating $K$-orbits}\label{sec: spherical functions}

In this section, following Akhiezer \cite{Akhiezer-rk}, we define certain $K$-invariant real-valued functions on a spherical homogeneous space. As in Section~\ref{sec-prelim}, we assume throughout that $G/H$ is a \emph{quasi-affine} spherical homogeneous space. 
Let $K$ be a maximal compact subgroup of $G$.  

Let $V_\lambda$ be a (non-zero) irreducible $G$-representation in $\C[G/H]$. Fix a $K$-invariant Hermitian product $\langle \cdot, \cdot \rangle$ on $V_\lambda \subset \C[G/H]$. Let $\{f_{\lambda, i}\} \subset V_\lambda$ be an orthonormal basis with respect to this $K$-invariant Hermitian product. Define the function $\phi_\lambda \colon G/H \to \R_{> 0}$ by:
\begin{equation}\label{eq: def spherical}
  \phi_\lambda(x) = \sum_i |f_{\lambda, i}(x)|^2 \text{.}
\end{equation}

\begin{Lem}\label{lemma: phi eH not zero} 
$\phi_\lambda(eH) \neq 0$. 
\end{Lem}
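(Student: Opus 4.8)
The plan is to show that the value $\phi_\lambda(eH) = \sum_i |f_{\lambda,i}(eH)|^2$ cannot vanish, and the key point is that vanishing would force every function in $V_\lambda$ to vanish at the base point $eH$, contradicting the fact that $V_\lambda$ is a nonzero subspace of $\C[G/H]$.

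First I would observe that $\phi_\lambda(eH) = 0$ if and only if $f_{\lambda,i}(eH) = 0$ for every $i$, since $\phi_\lambda$ is a sum of nonnegative real numbers. Because $\{f_{\lambda,i}\}$ is a basis of $V_\lambda$, this is equivalent to $f(eH) = 0$ for all $f \in V_\lambda$. So it suffices to produce some $f \in V_\lambda$ with $f(eH) \neq 0$.

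Next I would use the $G$-equivariance of the setup. Suppose for contradiction that $f(eH) = 0$ for all $f \in V_\lambda$. For any $g \in G$ and any $f \in V_\lambda$, the translate $g \cdot f$ again lies in $V_\lambda$ (as $V_\lambda$ is a $G$-submodule of $\C[G/H]$), hence $(g \cdot f)(eH) = f(g^{-1} \cdot eH) = 0$. As $g$ ranges over $G$, the point $g^{-1} \cdot eH$ ranges over all of $G/H$ (the action is transitive), so this says $f \equiv 0$ on $G/H$ for every $f \in V_\lambda$, i.e. $V_\lambda = 0$ as a subspace of $\C[G/H]$. This contradicts the hypothesis that $V_\lambda$ is a \emph{non-zero} irreducible $G$-representation appearing in $\C[G/H]$, completing the proof.

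There is essentially no hard part here; the only thing to be slightly careful about is the direction of the group action and the fact that transitivity of $G$ on $G/H$ is exactly what converts ``vanishes at one point and is $G$-stable'' into ``vanishes identically.'' One could alternatively phrase the argument via the evaluation functional $\mathrm{ev}_{eH} \colon V_\lambda \to \C$, $f \mapsto f(eH)$: if this were the zero map then its $G$-translates would also be zero, and these span the functionals $f \mapsto f(x)$ for all $x \in G/H$ by transitivity, again forcing $V_\lambda = 0$.
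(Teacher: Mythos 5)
Your proof is correct and follows essentially the same approach as the paper: both arguments spread the hypothetical vanishing at $eH$ over all of $G/H$ using the $G$-stability of $V_\lambda$ and the transitivity of the $G$-action, contradicting $V_\lambda \neq 0$.
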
 

\begin{proof} 
Suppose $\phi_\lambda(eH) = 0$. From the definition it follows that $f_{\lambda, i}(eH)=0$ for all $i$. Since $V_\lambda$ is a $G$-representation, for any fixed $j$, we know $g \cdot f_{\lambda, j}$ is a linear combination of $f_{\lambda,i}$'s, but this implies $(g \cdot f_{\lambda, j})(eH) := f_{\lambda, j}(g^{-1}H) = 0$ also. Since this was true of any $j$, we conclude that $\phi_\lambda(g^{-1}H) = 0$ for all $g \in G$. This implies $f_{\lambda, i} \equiv 0$ as functions on $G/H$, for all $i$. This is a contradiction, since $V_\lambda$ is a non-zero representation.
\end{proof} 

By the above lemma, we may normalize our choice of $\phi_\lambda$ so that it satisfies 
\[
  \phi_\lambda(eH) = 1,  \quad \forall \lambda \in \Lambda^+_{G/H} \text{.}
\]

\begin{Rem}    \label{rem-weight-vec-orth-basis}
It is not hard to see that, with respect to any $K$-invariant Hermitian product $\langle \cdot, \cdot \rangle$ on $V_\lambda$, two $T$-weight vectors corresponding to different $T$-weights must be orthogonal with respect to $\langle \cdot, \cdot \rangle$. Thus we may assume without loss of generality that the orthonormal basis $\{f_{\lambda, i}\}$ consists of $T$-weight vectors. 
\end{Rem}

The following lemma is straightforward. 

\begin{Lem} \label{lem-sph-fct}
  The function $\phi_\lambda$ is $K$-invariant, and independent of the chosen orthonormal basis $\{f_{\lambda, i}\}$.
\end{Lem}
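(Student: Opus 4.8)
The plan is to establish both assertions of Lemma~\ref{lem-sph-fct} --- the $K$-invariance of $\phi_\lambda$ and its independence of the chosen orthonormal basis --- essentially from the behaviour of the $K$-invariant Hermitian form under the $G$-action. First I would fix the notation: let $\{f_{\lambda, i}\}$ be an orthonormal basis of $V_\lambda$ with respect to a fixed $K$-invariant Hermitian product $\langle\cdot,\cdot\rangle$, and recall that the left action of $G$ on $\C[G/H]$ is given by $(g\cdot f)(x) = f(g^{-1}x)$, so that $V_\lambda$ is a $G$-submodule. For $k \in K$ the key observation is that $k$ acts on $V_\lambda$ as a \emph{unitary} operator with respect to $\langle\cdot,\cdot\rangle$, by hypothesis; hence $\{k \cdot f_{\lambda, i}\}$ is again an orthonormal basis of $V_\lambda$.

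The main step is then to recognize $\phi_\lambda$ as a quantity intrinsically attached to the Hermitian structure rather than to a particular basis, by rewriting $\sum_i |f_{\lambda,i}(x)|^2$ in a basis-free way. Concretely, for each $x \in G/H$ the evaluation $\mathrm{ev}_x \colon V_\lambda \to \C$, $f \mapsto f(x)$, is a linear functional on the finite-dimensional Hermitian space $V_\lambda$, so by the Riesz representation there is a unique vector $r_x \in V_\lambda$ with $f(x) = \langle f, r_x\rangle$ for all $f \in V_\lambda$. Expanding $r_x$ in the orthonormal basis gives $r_x = \sum_i \overline{f_{\lambda,i}(x)}\, f_{\lambda,i}$, and therefore
\[
\phi_\lambda(x) = \sum_i |f_{\lambda,i}(x)|^2 = \|r_x\|^2 .
\]
Since the right-hand side makes no reference to the basis, independence of the orthonormal basis $\{f_{\lambda,i}\}$ is immediate. (Alternatively, one can argue directly that the Gram matrix change between two orthonormal bases is a unitary matrix $U$, and $\sum_i |f_i(x)|^2$ is invariant under $f_i \mapsto \sum_j U_{ij} f_j$ because $U^*U = I$; this is the routine computation I would not spell out in full.)

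For $K$-invariance, I would combine the two remarks above: for $k \in K$,
\[
\phi_\lambda(k^{-1}x) = \sum_i |f_{\lambda,i}(k^{-1}x)|^2 = \sum_i |(k\cdot f_{\lambda,i})(x)|^2 ,
\]
and since $\{k\cdot f_{\lambda,i}\}$ is again an orthonormal basis of $V_\lambda$ (unitarity of the $K$-action), the already-proven basis-independence shows this equals $\sum_i |f_{\lambda,i}(x)|^2 = \phi_\lambda(x)$. Equivalently, in the Riesz picture, $r_{k^{-1}x} = k \cdot r_x$ and $k$ preserves the norm, so $\|r_{k^{-1}x}\|^2 = \|r_x\|^2$. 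There is no serious obstacle here; the only point that requires a moment's care is the bookkeeping of the left-action convention $(g\cdot f)(x) = f(g^{-1}x)$, so that invariance under $x \mapsto k^{-1}x$ (equivalently under the $K$-action on $G/H$ from the left) matches invariance under $f \mapsto k\cdot f$ on $V_\lambda$ --- this is where ``$K$-invariant Hermitian product on $V_\lambda$'' gets used. Everything else is linear algebra of finite-dimensional Hermitian spaces.
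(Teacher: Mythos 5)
Your proof is correct and is exactly the standard argument the paper has in mind when it asserts Lemma~\ref{lem-sph-fct} is ``straightforward'' and omits the proof; the Riesz-representative (equivalently, unitary change-of-basis) formulation of basis-independence, followed by unitarity of the $K$-action to deduce $K$-invariance, is the right way to see it, and you have handled the $(g\cdot f)(x)=f(g^{-1}x)$ sign convention correctly.
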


We call the function $\phi_\lambda$ the \textbf{spherical function associated to the highest weight $\lambda \in \Lambda^+_{G/H}$}. 
Before stating the next result we need some preliminaries. First we consider the algebra of $K$-invariants $(\C[G/H] \otimes \overline{\C[G/H]})^K$.  
By multiplication of functions, we have a natural map $\C[G/H] \otimes \overline{\C[G/H]} \to \C[G/H] \cdot \overline{\C[G/H]}$ where the target denotes the $\C$-span of all $\C$-valued functions on $G/H$ of the form $f \cdot \bar{g}$ where $f, g \in \C[G/H]$. It is possible to see, using the fact that holomorphic functions are determined by their values on any non-empty open neighborhood, and the independence of holomorphic and anti-holomorphic variables $z_i$ and $\bar{z}_i$ in some local coordinates, that this map is an isomorphism.
Therefore, in what follows, we will slightly abuse notation and sometimes write $\C[G/H] \otimes \overline{\C[G/H]}$ and sometimes write its image $\C[G/H] \cdot \overline{\C[G/H]}$. In both cases, the $G$-action is given by a diagonal $G$-action on each factor, and the identification is $G$-equivariant. 

The following is a result of Akhiezer \cite[Lemma 3]{Akhiezer-rk}. 

\begin{Prop}{\cite[Lemma 3]{Akhiezer-rk}}
  \label{prop-sph-functions}
  The set of functions $\left\{ \phi_\lambda \mid \lambda \in \Lambda^+_{G/H} \right\}$ is a basis for $(\C[G/H] \cdot \overline{\C[G/H]})^K$ as a vector space over $\C$. In particular, 
\begin{equation} \label{equ-G/H-K}  
(\C[G/H] \cdot \overline{\C[G/H]})^K = \bigoplus_{\lambda \in \Lambda^+_{G/H}} \C\,\phi_\lambda.
\end{equation}
\end{Prop}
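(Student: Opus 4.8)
The plan is to combine the multiplicity-free decomposition $\C[G/H] = \bigoplus_{\lambda \in \Lambda^+_{G/H}} W_\lambda$ (Definition~\ref{def: Lambda plus}) with Schur's lemma for the compact group $K$. First I would decompose
\[
\C[G/H] \otimes \overline{\C[G/H]} \;=\; \bigoplus_{\lambda,\mu \in \Lambda^+_{G/H}} W_\lambda \otimes \overline{W_\mu}
\]
as a (possibly infinite) algebraic direct sum of $G$-modules, with $G$ — and hence $K$ — acting diagonally; here $\overline{W_\mu}$ denotes the complex-conjugate representation, so that $\overline{\C[G/H]} = \bigoplus_\mu \overline{W_\mu}$. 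Since forming $K$-invariants commutes with (even infinite) direct sums,
\[
\big(\C[G/H] \otimes \overline{\C[G/H]}\big)^K \;=\; \bigoplus_{\lambda,\mu} \big(W_\lambda \otimes \overline{W_\mu}\big)^K .
\]

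The key computation is that $\dim_\C \big(W_\lambda \otimes \overline{W_\mu}\big)^K$ equals $1$ when $\lambda=\mu$ and $0$ otherwise. Fixing the $K$-invariant Hermitian product on $W_\mu$, the map $v \mapsto \langle \cdot, v\rangle$ is conjugate-linear and $K$-equivariant, hence identifies $\overline{W_\mu}$ with the dual $W_\mu^*$ as $\C$-linear $K$-representations; therefore $\big(W_\lambda \otimes \overline{W_\mu}\big)^K \cong \Hom_K(W_\mu, W_\lambda)$. Because the $W_\lambda$ are algebraic $G$-modules and $K$ is a maximal compact subgroup of the connected reductive group $G$, a $K$-equivariant linear map between them is automatically $G$-equivariant (Weyl's unitary trick / complexification), so $\Hom_K(W_\mu, W_\lambda) = \Hom_G(W_\mu, W_\lambda)$, which by Schur's lemma is $\C$ if $\lambda=\mu$ and $0$ otherwise.

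Next I would locate $\phi_\lambda$ in this picture. Under the $G$-equivariant isomorphism $\C[G/H]\otimes\overline{\C[G/H]} \xrightarrow{\sim} \C[G/H]\cdot\overline{\C[G/H]}$ recalled just above the statement, $\phi_\lambda$ corresponds to $\sum_i f_{\lambda,i}\otimes\overline{f_{\lambda,i}} \in W_\lambda \otimes \overline{W_\lambda}$. Transporting further along $W_\lambda \otimes \overline{W_\lambda} \cong W_\lambda \otimes W_\lambda^* \cong \operatorname{End}(W_\lambda)$ and using that $\{f_{\lambda,i}\}$ is orthonormal, this element becomes $\sum_i$ (the orthogonal projector onto $\C f_{\lambda,i}$) $= \mathrm{id}_{W_\lambda}$, which is nonzero and manifestly $K$-invariant; hence it spans the one-dimensional space $\big(W_\lambda \otimes \overline{W_\lambda}\big)^K$. (As a byproduct this re-proves the independence of the orthonormal basis and the $K$-invariance in Lemma~\ref{lem-sph-fct}, as well as Lemma~\ref{lemma: phi eH not zero}.)

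Assembling the pieces: the $\phi_\lambda$ lie in pairwise distinct summands $W_\lambda\otimes\overline{W_\lambda}$, hence are $\C$-linearly independent, and each spans the corresponding summand of $\big(\C[G/H]\otimes\overline{\C[G/H]}\big)^K$; therefore they form a $\C$-basis of it, and pushing this basis forward along the ($K$-equivariant) isomorphism to $\C[G/H]\cdot\overline{\C[G/H]}$ yields the claim together with the decomposition~\eqref{equ-G/H-K}. I expect the only non-formal ingredient to be the representation-theoretic input that $\overline{W_\mu}\cong W_\mu^*$ as $K$-modules and that $K$-equivariant maps between the $W_\lambda$ coincide with $G$-equivariant ones; once that is in hand, the remainder is bookkeeping with the decomposition and Schur's lemma.
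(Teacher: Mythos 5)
Your proposal is correct and follows essentially the same route as the paper: decompose $\C[G/H]\otimes\overline{\C[G/H]}$ into the $W_\lambda\otimes\overline{W_\mu}$, pass to $K$-invariants, identify $\overline{W_\mu}\cong W_\mu^*$ via the Hermitian product, and apply Schur's lemma to see that each $K$-invariant block is one-dimensional and occurs only on the diagonal $\lambda=\mu$. Your extra step of transporting $\phi_\lambda$ to $\mathrm{id}_{W_\lambda}\in\operatorname{End}(W_\lambda)$ is a pleasant way to see at a glance that $\phi_\lambda$ is nonzero, $K$-invariant, and basis-independent, but it is a cosmetic refinement rather than a different argument.
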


\begin{proof}
  The inclusion ``$\supset$'' follows by Lemma \ref{lem-sph-fct}. For the reverse inclusion, 
  first observe that 
\begin{equation*}   
    \rleft( \C[G/H] \otimes \overline{\C[G/H]} \rright)^K = \bigoplus_{\lambda_1,\lambda_2 \in \Lambda_{G/H}^+} \rleft( V_{\lambda_1} \otimes \overline{V}_{\lambda_2} \rright)^K \cong \bigoplus_{\lambda_1, \lambda_2 \in \Lambda_{G/H}^+} \Hom_K(V_{\lambda_2}, V_{\lambda_1})
\end{equation*}
where $\Hom_K(V_{\lambda_2},V_{\lambda_1})$ denotes the set of $K$-equivariant module homomorphisms and where we have used the fact that, for any rational $K$-module $V$, the complex conjugate $K$-module $\overline{V}$ is isomorphic to the dual $K$-module $V^*$ (the isomorphism depends on the choice of a $K$-invariant Hermitian product on $V$).
  By Schur's lemma, $\Hom_K(V_{\lambda_2},V_{\lambda_1})\cong \C$ if $\lambda_1 = \lambda_2$ and $\Hom_K(V_{\lambda_1}, V_{\lambda_2}) = \{ 0 \}$ otherwise. Thus we conclude that 
  $(\C[G/H] \otimes \overline{\C[G/H]})^K \cong \bigoplus_{\lambda \in \Lambda^+_{G/H}} \Hom_K(V_\lambda, V_\lambda)$. Since $\Hom_K(V_\lambda, V_\lambda) \cong \C$ for each $\lambda \in \Lambda^+_{G/H}$ we 
  see that the claim will follow if we can show that $\phi_\lambda$ lies in the image $V_\lambda \otimes \overline{V_\lambda}$ of $V_\lambda \otimes \overline{V_\lambda}$ under the multiplication map and that it is non-zero. But this follows from the definition~\eqref{eq: def spherical} of $\phi_\lambda$ and Lemma~\ref{lemma: phi eH not zero}. 
\end{proof}

Multiplication of spherical functions can be understood in terms of the tail cone, as follows. 
\begin{Lem}    \label{lem-multiplication and tails}
  Let $\lambda, \mu \in \Lambda_{G/H}^+$. Then
  \[
    \phi_\lambda \cdot \phi_\mu = \sum_{\lambda + \mu - \gamma \in \Tm_{G/H} \cap \Lambda} c_\gamma \phi_\gamma \text{,}
  \]
  for coefficients $c_\gamma \in \R$. Moreover, we have $c_{\lambda+\mu} \neq 0$.
\end{Lem}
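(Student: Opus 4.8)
The plan is to compute the product $\phi_\lambda \cdot \phi_\mu$ inside the algebra $(\C[G/H]\cdot\overline{\C[G/H]})^K$ and use Proposition~\ref{prop-sph-functions} to express it in the basis $\{\phi_\gamma\}$, then identify exactly which $\gamma$ can occur. First I would note that $\phi_\lambda \in W_\lambda \cdot \overline{W_\lambda}$ and $\phi_\mu \in W_\mu \cdot \overline{W_\mu}$ (this is where the definition~\eqref{eq: def spherical} is used, together with the identification of the multiplication map with the tensor product established before Proposition~\ref{prop-sph-functions}). Hence the product $\phi_\lambda \cdot \phi_\mu$ lies in $(W_\lambda W_\mu)\cdot\overline{(W_\lambda W_\mu)}$, where $W_\lambda W_\mu$ denotes the span of products as in Theorem~\ref{th-coor-ring-multiplication-spherical}. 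It is clearly $K$-invariant, being a product of two $K$-invariant functions, so by Proposition~\ref{prop-sph-functions} we may write $\phi_\lambda \cdot \phi_\mu = \sum_\gamma c_\gamma \phi_\gamma$ with $c_\gamma \in \C$; since both sides are real-valued and the $\phi_\gamma$ are a $\C$-basis with each $\phi_\gamma$ real-valued, the $c_\gamma$ are in fact real.

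Next I would determine which $\gamma$ appear with $c_\gamma\neq 0$. The key point is that $\phi_\gamma$ spans the image of $V_\gamma\otimes\overline{V_\gamma}$ under multiplication, i.e. $\phi_\gamma \in W_\gamma\cdot\overline{W_\gamma}$, and that the subspaces $W_\gamma\cdot\overline{W_\gamma}$ of $(\C[G/H]\cdot\overline{\C[G/H]})^K$ are "indexed" compatibly with the $G$-module decomposition of $\C[G/H]$. A weight $\gamma$ contributes to $\phi_\lambda\cdot\phi_\mu$ only if $V_\gamma$ appears in $W_\lambda W_\mu$; by Theorem~\ref{th-coor-ring-multiplication-spherical}, every such $\gamma$ satisfies $\gamma \geq_{G/H} \lambda+\mu$, which by definition of the spherical dominant order and Remark~\ref{rem-tail-cone-sph-roots} means $\gamma - (\lambda+\mu)$ is a nonnegative combination of spherical roots, equivalently $\lambda+\mu-\gamma \in -\Tm_{G/H}$. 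Wait — I should be careful about the sign convention in the definition of $\geq_{G/H}$: the excerpt says $\lambda \geq_{G/H}\mu$ iff $\mu-\lambda$ is a nonnegative combination of spherical roots, so $\gamma \geq_{G/H}\lambda+\mu$ means $(\lambda+\mu)-\gamma$ is a nonnegative combination of spherical roots, i.e. $\lambda+\mu-\gamma\in\Tm_{G/H}$. Since the spherical roots lie in $\Lambda_{G/H}$ and $\gamma,\lambda,\mu\in\Lambda_{G/H}$ as well, we get $\lambda+\mu-\gamma\in\Tm_{G/H}\cap\Lambda$, which is precisely the index set claimed. Thus $c_\gamma = 0$ unless $\lambda+\mu-\gamma\in\Tm_{G/H}\cap\Lambda$.

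Finally I would show $c_{\lambda+\mu}\neq 0$. For this I would evaluate a suitable component, or project onto the $V_{\lambda+\mu}$-isotypic part. The cleanest argument: consider the lowest term with respect to $>_{G/H}$, namely the $\gamma = \lambda+\mu$ component. Under the projection of $\C[G/H]\otimes\overline{\C[G/H]}$ onto $(V_{\lambda+\mu}\otimes\overline{V_{\lambda+\mu}})^K$ — or more concretely, by restricting the product functions $f_{\lambda,i}f_{\mu,j}$ to their $V_{\lambda+\mu}$-leading part via Theorem~\ref{th-coor-ring-multiplication-spherical} — one sees that $\phi_\lambda\cdot\phi_\mu$ has nonzero image, because the product of the highest-weight vectors $f_\lambda f_\mu$ is a nonzero highest-weight vector of weight $\lambda+\mu$ in $\C[G/H]$ (using that $\C[G/H]$ is a domain, being the coordinate ring of an irreducible variety), and the corresponding term $|f_\lambda f_\mu|^2$ appears with positive coefficient in an appropriate orthonormal-basis expansion. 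Matching with the basis expansion, only $\phi_{\lambda+\mu}$ among the $\phi_\gamma$ has nonzero image in the $V_{\lambda+\mu}$-component, so $c_{\lambda+\mu}\neq 0$. The main obstacle I anticipate is making this last nonvanishing argument fully rigorous — one must carefully track how the "leading term" of a product of isotypic components interacts with the Hermitian structure and the $\phi_\gamma$-basis, since $\phi$ is built from squared absolute values rather than linearly; an alternative that sidesteps some of this is to evaluate at $eH$ after twisting by a generic $g\in G$ and use a positivity/density argument, but in either case the crux is confirming that the $(\lambda+\mu)$-coefficient genuinely survives.
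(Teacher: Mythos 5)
Your argument for the first assertion (which $\gamma$ can appear, and that the $c_\gamma$ are real) is correct and essentially the same as the paper's: expand $\phi_\lambda\phi_\mu$ inside $(\C[G/H]\cdot\overline{\C[G/H]})^K$, apply Proposition~\ref{prop-sph-functions} to express it in the $\{\phi_\gamma\}$-basis, and constrain which $\gamma$ occur via Theorem~\ref{th-coor-ring-multiplication-spherical} together with Schur's lemma to kill the off-diagonal $(V_\gamma\overline{V}_{\gamma'})^K$ terms. Your reality argument (conjugating and using linear independence of the $\phi_\gamma$) is a minor point the paper leaves implicit, and it is fine.

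For $c_{\lambda+\mu}\neq 0$, you take a genuinely different route from the paper, and you correctly identify exactly where the care is needed. The paper works with $T_K\times T_K$-weights in $\bigoplus_\gamma V_\gamma\otimes\overline{V_\gamma}$: it observes that $(\lambda+\mu, -(\lambda+\mu))$ is the strictly largest weight occurring, that it occurs only in the $\gamma=\lambda+\mu$ summand, and that $\phi_\lambda\phi_\mu=f_\lambda\overline{f_\lambda}f_\mu\overline{f_\mu}+(\text{strictly lower weight terms})$, so the leading coefficient cannot cancel. Your approach instead projects onto $(V_{\lambda+\mu}\otimes\overline{V_{\lambda+\mu}})^K$, observes that only $\phi_{\lambda+\mu}$ survives the projection, and tries to show $\phi_\lambda\phi_\mu$ has nonzero image. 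The gap you flagged is real as written, but the fix is short and is exactly the positivity you gesture at: writing $g_{ij}$ for the $W_{\lambda+\mu}$-component of $f_{\lambda,i}f_{\mu,j}$, the projection of $\phi_\lambda\phi_\mu=\sum_{i,j}(f_{\lambda,i}f_{\mu,j})\otimes\overline{(f_{\lambda,i}f_{\mu,j})}$ onto $V_{\lambda+\mu}\otimes\overline{V_{\lambda+\mu}}$ is $\sum_{i,j} g_{ij}\otimes\overline{g_{ij}}$, whose value at any point is $\sum_{i,j}|g_{ij}|^2\geq 0$; this cannot be identically zero since the term with $f_{\lambda,i}=f_\lambda$, $f_{\mu,j}=f_\mu$ contributes $g=f_\lambda f_\mu\neq 0$ (the domain argument, which also explains why $f_\lambda f_\mu$ lies entirely in $W_{\lambda+\mu}$ by multiplicity-freeness). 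Once that observation is in place your argument closes, and it is arguably cleaner than the paper's since it avoids setting up the $T_K\times T_K$-weight bookkeeping; what the paper's approach buys is that it generalizes verbatim to showing the ``highest'' coefficient survives in other such expansions, whereas the positivity trick is specific to sesquilinear expressions of the form $\sum h_i\overline{h_i}$.
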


\begin{proof} 
We follow the proof of Proposition \ref{prop-sph-functions}. We have:
\begin{equation}
\begin{split} 
\phi_\lambda \phi_\mu \in (V_{\lambda}\overline{V}_{\lambda})^K (V_{\mu}\overline{V}_{\mu})^K &\subset (V_{\lambda}V_{\mu}\overline{V}_{\lambda}\overline{V}_{\mu})^K, \\
&\subset \bigoplus_{\lambda + \mu - \gamma, \lambda + \mu-\gamma' \textup{ are tails } }(V_{\gamma} \overline{V}_{\gamma'})^K,  \quad \textup{ by Theorem~\ref{th-coor-ring-multiplication-spherical}}   \\
&\subset \bigoplus_{\lambda + \mu - \gamma \textup{ is a tail }} (V_{\gamma} \overline{V}_{\gamma})^K   \\
\end{split} 
\end{equation} 
where the last inclusion is by Schur's lemma and the fact that the multiplication map $\C[G/H] \otimes \overline{\C[G/H]} \to \C[G/H] \cdot \overline{\C[G/H]}$ is an isomorphism (so we may identify $V_\gamma \overline{V_{\gamma'}}$ with $V_\gamma \otimes \overline{V_{\gamma'}}$). This proves the first claim. 
To prove that $c_{\lambda+\mu}\neq 0$, we consider the space 
\begin{equation}\label{eq: 2.5} 
\bigoplus_{\lambda + \mu - \gamma \textup{ is a tail}} V_\gamma \otimes \overline{V_\gamma}
\end{equation}
(so this is the space appearing above, before taking $K$-invariants) as a $T_K \times T_K$-representation, for $T_K$ a maximal torus of $K$. From Theorem~\ref{th-coor-ring-multiplication-spherical} and the definition of a tail, we know that $\gamma \leq \lambda+\mu$ in the usual dominance order for any $\gamma$ that appears in the direct sum~\eqref{eq: 2.5}. Since for $\gamma$ any weight, $V_\gamma$ denotes the irreducible $G$-representation with \emph{highest} weight $\gamma$, it follows that the highest $T_K \times T_K$-weight that can appear in~\eqref{eq: 2.5} is $(\lambda+\mu, -(\lambda+\mu))$ (where the partial ordering is $(\lambda, -\mu) > (\lambda', -\mu')$ if $\lambda>\lambda', \mu> \mu'$ in the usual dominance order). Moreover, from the above it follows that the only summand in~\eqref{eq: 2.5} which contains $(\lambda+\mu, -(\lambda+\mu))$ as a $T_K \times T_K$-weight is $V_{\lambda+\mu} \otimes \overline{V_{\lambda+\mu}}$, i.e. the term corresponding to $\gamma=\lambda+\mu$. 

Recall that the spherical function $\phi_{\lambda+\mu}$ is a sum $\sum f_{\lambda+\mu, i} \overline{f}_{\lambda+\mu, i}$ (identified with $\sum f_{\lambda+\mu, i} \otimes \overline{f}_{\lambda+\mu, i}$), where $\{f_{\lambda+\mu, i}\}$ is a basis of $T$-weight vectors in $V_{\lambda+\mu}$. In particular, $\phi_{\lambda+\mu}$ has a non-zero component in the one-dimensional $T_K \times T_K$-weight space of weight $(\lambda+\mu, -(\lambda+\mu))$, i.e. the highest-weight space of $V_{\lambda+\mu} \otimes \overline{V_{\lambda+\mu}}$. It follows from the above discussion that in order to determine whether $c_{\lambda+\mu} \neq 0$, it suffices to show that $\phi_\lambda \phi_\mu$ has a non-zero component in the $T_K \times T_K$-weight space in~\eqref{eq: 2.5} of weight $(\lambda+\mu, -(\lambda+\mu))$. For the following we temporarily denote by $f_\lambda$ (respectively $f_\mu$) the highest weight vector of $T_K$-weight $\lambda$ (respectively $\mu$) in $V_\lambda$ (respectively $V_\mu$). By definition, $\phi_\lambda = f_\lambda \overline{f_\lambda} + \textup{ terms with $T_K$-weight $< (\lambda, -\lambda)$}$ and similarly for $\phi_\mu$. Hence $\phi_\lambda \phi_\mu = f_\lambda \overline{f_\lambda} f_\mu \overline{f_\mu} + \textup{ terms with $T_K \times T_K$-weight $<(\lambda+\mu, -(\lambda+\mu))$}$. Since $f_\lambda \overline{f_\lambda} f_\mu \overline{f_\mu}$ has weight $(\lambda+\mu, -(\lambda+\mu))$ and is non-zero, and all other terms are strictly smaller, there can be no cancellation and we conclude $c_{\lambda+\mu} \neq 0$ as desired. 
\end{proof}

\begin{Prop} \label{prop: algebra generators}
  If $\lambda_1, \ldots, \lambda_s \in \Lambda_{G/H}^+$ generate $\Lambda^+_{G/H}$ as a semigroup, then $\phi_{\lambda_1}, \ldots, \phi_{\lambda_s}$ generate $(\C[G/H] \otimes \overline{\C[G/H]})^K$ as an algebra.
\end{Prop}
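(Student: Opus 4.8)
The plan is to prove this by an induction on the structure of the semigroup $\Lambda^+_{G/H}$, using Lemma~\ref{lem-multiplication and tails} as the key tool. By Proposition~\ref{prop-sph-functions}, the algebra $(\C[G/H] \otimes \overline{\C[G/H]})^K$ has $\{\phi_\lambda \mid \lambda \in \Lambda^+_{G/H}\}$ as a $\C$-vector-space basis, so it suffices to show that every $\phi_\lambda$ lies in the subalgebra $A$ generated by $\phi_{\lambda_1}, \ldots, \phi_{\lambda_s}$. The idea is that since $\lambda_1, \ldots, \lambda_s$ generate $\Lambda^+_{G/H}$ as a semigroup, we may write any $\lambda \in \Lambda^+_{G/H}$ as $\lambda = \mu + \nu$ with $\mu, \nu \in \Lambda^+_{G/H}$ nonzero (unless $\lambda$ is already one of the $\lambda_i$, or $\lambda = 0$, both of which are trivially in $A$), and then invoke Lemma~\ref{lem-multiplication and tails}, which tells us
\[
\phi_\mu \cdot \phi_\nu = c_{\mu+\nu} \phi_{\mu+\nu} + \sum_{\gamma} c_\gamma \phi_\gamma
\]
where $c_{\mu+\nu} \neq 0$ and all other $\gamma$ appearing satisfy $\mu + \nu - \gamma \in \Tm_{G/H} \cap \Lambda$, i.e. $\gamma <_{G/H} \mu+\nu$ in the spherical dominant order (in fact $\gamma \le \mu + \nu$ in the usual dominance order too, by Theorem~\ref{th-coor-ring-multiplication-spherical}). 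Rearranging, $\phi_{\mu+\nu} = c_{\mu+\nu}^{-1}(\phi_\mu \phi_\nu - \sum_\gamma c_\gamma \phi_\gamma)$.

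First I would set up the induction carefully. The natural order to induct on is the usual dominance partial order (or a refinement to a total order, such as the one used in the proof of Lemma~\ref{lem-multiplication and tails}, e.g. by total degree of $\lambda$ as a weight followed by a tie-break): one shows by strong induction that $\phi_\lambda \in A$ for all $\lambda \in \Lambda^+_{G/H}$. The base cases are $\lambda = 0$ (where $\phi_0$ is the constant function $1$) and $\lambda = \lambda_i$ for some $i$; these are in $A$ by fiat. For the inductive step, take $\lambda$ not of this form, write $\lambda = \mu + \nu$ with $\mu, \nu \in \Lambda^+_{G/H}$ both nonzero (possible precisely because $\lambda$ is not a generator and not zero, by the assumption that the $\lambda_i$ generate the semigroup — more precisely, $\lambda = \sum a_i \lambda_i$ with $\sum a_i \ge 2$, so split the sum), and apply the rearranged identity above. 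By Theorem~\ref{th-coor-ring-multiplication-spherical} every $\gamma$ occurring in the sum satisfies $\gamma \le \mu + \nu = \lambda$ in the usual dominance order, with equality only for $\gamma = \lambda$ itself, so all the $\phi_\gamma$ with $\gamma \neq \lambda$ are handled by the inductive hypothesis; meanwhile $\phi_\mu, \phi_\nu \in A$ again by the inductive hypothesis since $\mu, \nu$ are strictly dominated by $\lambda$. Hence $\phi_\lambda \in A$, completing the induction.

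One subtlety to nail down is that the induction is well-founded: the usual dominance order restricted to the weights appearing in $\Lambda^+_{G/H}$ (or in any bounded region) has only finitely many elements below a given weight, which is standard, so the strong induction is legitimate; alternatively, replace the dominance order by the total order on $\Lambda^+_{G/H}$ given by $\lambda \mapsto \langle \lambda, \rho^\vee\rangle$ (sum of coordinates in the basis of fundamental weights, say), noting that the tail terms $\gamma$ have strictly smaller value under this functional since $\lambda - \gamma$ is a nonzero nonnegative combination of positive roots — this makes the descent manifestly terminating. I would state this total order explicitly and remark that the relevant monotonicity follows from Theorem~\ref{th-coor-ring-multiplication-spherical} together with the fact that spherical roots are nonnegative combinations of positive roots.

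The main obstacle — though it is really a bookkeeping point rather than a deep one — is ensuring that the ``lower-order'' terms $\phi_\gamma$ appearing in Lemma~\ref{lem-multiplication and tails} are genuinely lower in whatever order underpins the induction, and that this order is a well-order on the relevant set. Once the correct order is fixed, the argument is a routine triangularity-of-multiplication induction of exactly the type used to prove that the coordinate ring is generated by the obvious generators in the theory of algebras with straightening laws or affine semigroup rings. I would therefore keep the proof short: fix the grading/order, check the base cases, and give the one-line rearrangement of the identity from Lemma~\ref{lem-multiplication and tails} for the inductive step.
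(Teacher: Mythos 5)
Your proof is essentially the same argument as the paper's, reorganized as a direct strong induction rather than a minimal-counterexample/cancellation argument. Both rest on the same two inputs: Proposition~\ref{prop-sph-functions} (so that it suffices to express each $\phi_\lambda$), and the ``triangularity'' in Lemma~\ref{lem-multiplication and tails} (the leading term of $\phi_\mu\phi_\nu$ is $c_{\mu+\nu}\phi_{\mu+\nu}$ with $c_{\mu+\nu}\ne 0$). Your induction scheme is a clean way to package this. Two points need care, one small and one more substantive.

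The small one is the phrase ``$\mu,\nu$ are strictly dominated by $\lambda$.'' In the \emph{usual} dominance order this is false in general: $\mu + \nu \geq \mu$ would require $\nu$ to be a nonnegative combination of simple roots, but $\nu$ is a dominant weight and, e.g., a fundamental weight of $\SL_n$ ($n\ge 3$) is not a nonnegative combination of simple roots. The tail terms $\gamma$, on the other hand, \emph{are} genuinely below $\lambda$ in dominance, since spherical roots are nonnegative combinations of simple roots. So the two families of lower-order terms are handled by different mechanisms, and you need a single well-founded order that dominates both. Your $\rho^\vee$-functional does the job --- $\langle \mu, \rho^\vee\rangle, \langle\nu, \rho^\vee\rangle >0$ for nonzero dominant $\mu, \nu$, and $\langle \lambda-\gamma,\rho^\vee\rangle>0$ for nonzero tails since $\langle\alpha_i,\rho^\vee\rangle=1$ for every simple root --- but you only check the latter, not the former, when introducing it. State both. (Incidentally you do not even need a total order or a tie-break: strong induction only needs well-foundedness, and the $\rho^\vee$-grading is already well-founded for semisimple $G$ because each graded piece is finite.)

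The substantive gap is the reductive case, which the paper handles in a separate step. Your order, and any functional of the type ``pairing with a strictly dominant covector,'' necessarily vanishes on characters of the connected center $Z$ of $G$. If $\Lambda^+_{G/H}$ contains nonzero central characters (and it can contain both $\chi$ and $-\chi$ for such $\chi$), there is no additive total pre-order on $\Lambda^+_{G/H}$ under which every nonzero element is positive, so the induction does not terminate. The paper deals with this by passing to a finite cover $G = G'\times Z$, using that $\phi_{\lambda+\gamma}=\phi_\lambda\phi_\gamma$ when $\lambda$ is a $G'$-weight and $\gamma$ is a $Z$-character (equation~\eqref{equ-phi-lambda-gamma}), and then running the cancellation argument in the semisimple part. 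Your write-up implicitly assumes $G$ semisimple (``coordinates in the basis of fundamental weights''); either add that hypothesis and separately reduce the reductive case to it as the paper does, or make the reduction part of the proof.
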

\begin{proof}
The proof is a standard ``canceling the leading term argument''. First we consider the case where $G$ is semi-simple. One can construct a well-ordering $>$ on the semigroup $\Lambda^+_{G/H}$ respecting the addition, such that if $\lambda_1 - \lambda_2$ is a non-negative combination of spherical roots then $\lambda_1 > \lambda_2$. Take a vector $\xi$ in the interior of the dual cone to the positive Weyl chamber. Moreover, assume that $\xi$ is irrational, that is, for any nonzero weight $\lambda$, $\langle \xi, \lambda \rangle \neq 0$. For two weights $\lambda$, $\mu$ define $\lambda > \mu$ if and only if $\langle \xi, \lambda \rangle > \langle \xi, \mu \rangle$. It is easy to see that $>$ has the required properties. 

For $f \in (\C[G/H] \otimes \overline{\C[G/H]})^K$ let $f = \sum_\lambda c_\lambda \phi_\lambda$ and define $$v(f) = \min\{\lambda \mid c_\lambda \neq 0\}.$$ 
For convenience we will write $\phi_i$ instead of $\phi_{\lambda_i}$ for $i=1,\ldots, s$. 
Suppose by contradiction that there exists $f \in (\C[G/H] \otimes \overline{\C[G/H]})^K$ that cannot be represented as a polynomial in the $\phi_i$ and $v(f)$ is minimum among all such $K$-invariant functions. Then since $\lambda_1, \ldots, \lambda_s$ generate $\Lambda^+_{G/H}$ we can find $k_i \geq 0$ such that 
$v(f) = v(\prod_i \phi_i^{k_i})$. It follows that there is $0 \neq c \in \C$ such  that $v(f  - c\prod_i \phi_i^{k_i}) < v(f)$. Thus $f  - c\prod_i \phi_i^{k_i}$, and  hence $f$, can be represented as  a polynomial in the $\phi_i$.  The contradiction proves the claim.

Next suppose $G$ is reductive. Replacing $G$ with a finite cover if necessary we can assume that $G = G' \times Z$ where $G'$ is connected semisimple and $Z$ is the connected component of the center of $G$. Then as semigroups $\Lambda^+_G = \Lambda^+_{G'} \oplus \Lambda_Z$, where $\Lambda_Z$ is the character lattice of the torus $Z$. One shows from the definitions that if $\lambda \in \Lambda^+_{G'} \cap \Lambda^+_{G/H}$ and $\gamma \in \Lambda_Z \cap \Lambda^+_{G/H}$ then
\begin{equation}  \label{equ-phi-lambda-gamma}
\phi_{\lambda+\gamma} = \phi_\lambda \phi_\gamma.
\end{equation}
The claim follows from \eqref{equ-phi-lambda-gamma} and the cancelation of the leading terms argument as in the semisimple case.   
\end{proof}

\begin{Rem} \label{rem-phi_k-vs-average-h}
  Let $h_\lambda \in V_\lambda \subset \C[G/H]$ be a nonzero function. Consider the function $\tilde{h}_\lambda$ obtained by averaging $h_\lambda \overline{h_\lambda}$ over $K$, i.e.,
  \[
    \tilde{h}_\lambda(x) = \int_{K} h_\lambda(k \cdot x) \overline{h_\lambda(k \cdot x)} \diff k \text{,}
  \]
  where $\diff k$ is the Haar measure on $K$. Since (the span of) $V_\lambda \overline{V_{\lambda}}$ is a finite-dimensional vector space, the integral $\tilde{h}_\lambda$ is still an element in $\langle V_\lambda \overline{V_{\lambda}} \rangle \subseteq \C[G/H] \otimes \overline{\C[G/H]}$, and is $K$-invariant by construction. From Proposition \ref{prop-sph-functions}, it follows that $\tilde{h}_\lambda$ is a scalar multiple of $\phi_\lambda$.
\end{Rem}

{\begin{Rem}[Highest weight monoid not necessarily free]  \label{rem-Stephanie}
In general, the highest weight monoid $\Lambda^+_{G/H}$ may not be \emph{freely} generated as a monoid. Here is an example which was communicated to us by St\'ephanie Cupit-Foutou and is inspired by \cite[\S 6.3]{Luna}. Let $G_1=G_2=\SL_2$ and for $i=1, 2$ let $T_i$ be a maximal tori of $G_i$ and $N_i$ its normalizer in $G_i$. Now let $G=G_1 \times G_2$ and $H=\ker(c_1 c_2)$ where $c_1$, $c_2$ are the non-trivial characters of $N_1$ and $N_2$ respectively. Then $G/H$ is spherical and affine, and its weight monoid is not free: it is generated by $2\varpi_1$, $2\varpi_2$ and $\varpi_1+\varpi_2$ where $\varpi_i$ denotes the fundamental weight for $G_i$.  
\end{Rem} 
}


\begin{Prop}  \label{prop-phi-lambda-inequalities}
Let $\lambda, \mu, \gamma \in \Lambda^+_{G/H}$ and suppose $\lambda+\mu-\gamma$ is a tail, i.e. $V_\gamma$ appears in $\rleft\langle V_\lambda V_\mu \rright\rangle \subset \C[G/H]$, then there is constant $c > 0$ such that for all $x \in G/H$ we have:
$$\phi_\lambda (x) \phi_\mu(x) \geq c \phi_\gamma(x).$$
\end{Prop}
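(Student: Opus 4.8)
The plan is to extract the relevant ``leading component'' of the product $\phi_\lambda \phi_\mu$ and compare it to $\phi_\gamma$ via a compactness argument. First I would recall from Lemma~\ref{lem-multiplication and tails} that
\[
\phi_\lambda \cdot \phi_\mu = \sum_{\lambda+\mu-\gamma' \in \Tm_{G/H} \cap \Lambda} c_{\gamma'} \phi_{\gamma'},
\]
so that $\phi_\gamma$ appears in this sum with some coefficient $c_\gamma$ (possibly zero). If $c_\gamma \neq 0$ I cannot yet conclude, since the other $\phi_{\gamma'}$ need not be nonnegative multiples; instead I would argue more directly at the level of the functions themselves. Observe that $\phi_\lambda \phi_\mu$ is, by construction, a sum of terms of the form $(f_{\lambda,i} f_{\mu,j})\overline{(f_{\lambda,i}f_{\mu,j})} = |f_{\lambda,i}f_{\mu,j}|^2$, hence a finite sum of squared absolute values of elements of $\langle V_\lambda V_\mu\rangle \subseteq \C[G/H]$. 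In other words, $\phi_\lambda\phi_\mu = \sum_m |h_m|^2$ where $\{h_m\}$ spans $\langle V_\lambda V_\mu\rangle$ (in fact the $h_m = f_{\lambda,i}f_{\mu,j}$ span it, and one can pass to a basis after a linear change, noting that a change of basis by a unitary matrix preserves $\sum |h_m|^2$, and a general linear change distorts it only by bounded factors).

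Next I would use the hypothesis that $V_\gamma$ appears in $\langle V_\lambda V_\mu\rangle$. Pick any orthonormal basis $\{g_k\}$ of $\langle V_\lambda V_\mu\rangle$ with respect to a $K$-invariant Hermitian product that is adapted to the isotypic decomposition, chosen so that the basis vectors lying in the $V_\gamma$-isotypic piece restrict (after projection onto one irreducible copy) to an orthonormal basis of a copy of $V_\gamma$; then $\sum_{k} |g_k|^2 \geq \sum_{k \in I_\gamma} |g_k|^2 = \phi_\gamma$ where $I_\gamma$ indexes the chosen copy of $V_\gamma$. The point is that $\sum_k |g_k|^2$, being a $K$-invariant positive combination of $|h_m|^2$'s with the $h_m$ spanning the same space, is comparable to $\phi_\lambda\phi_\mu$: both are $K$-invariant, everywhere-positive real-analytic functions lying in the finite-dimensional space $(\langle V_\lambda V_\mu\rangle \cdot \overline{\langle V_\lambda V_\mu\rangle})^K$, and by Proposition~\ref{prop-sph-functions} (applied to the relevant finite-dimensional invariant subspace) they are both finite positive combinations whose ratio is controlled. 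Concretely, $\phi_\lambda\phi_\mu = \sum_m |h_m|^2$ and $\sum_k |g_k|^2$ differ by conjugation/expansion via a fixed invertible matrix relating the two spanning sets, so there is a constant $c_1 > 0$ with $\phi_\lambda\phi_\mu \geq c_1 \sum_k |g_k|^2$ pointwise. Combining, $\phi_\lambda(x)\phi_\mu(x) \geq c_1 \sum_k |g_k(x)|^2 \geq c_1 \phi_\gamma(x)$, but I must still account for the normalization $\phi_\gamma(eH)=1$: the copy of $V_\gamma$ sitting inside $\langle V_\lambda V_\mu\rangle$ need not be normalized so that its spherical function equals our normalized $\phi_\gamma$, but since any two $K$-invariant Hermitian products on $V_\gamma$ differ by a scalar, the spherical function of that copy is a fixed positive scalar multiple of the normalized $\phi_\gamma$, which can be absorbed into the constant. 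Setting $c > 0$ to be $c_1$ times that scalar gives the claim.

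The main obstacle I anticipate is making rigorous the comparison ``$\phi_\lambda\phi_\mu \geq c_1 \sum_k |g_k|^2$'': a priori the equality $\phi_\lambda \phi_\mu = \sum_m |f_{\lambda,i} f_{\mu,j}|^2$ expresses $\phi_\lambda\phi_\mu$ as a sum of squares over a (generally non-orthogonal, possibly linearly dependent) spanning set $\{f_{\lambda,i}f_{\mu,j}\}$ of $\langle V_\lambda V_\mu\rangle$, and I need the elementary linear-algebra fact that for any two finite spanning sets $\{h_m\}$, $\{g_k\}$ of a fixed finite-dimensional inner product space, $\sum|h_m|^2$ and $\sum|g_k|^2$ are comparable by positive constants depending only on the change-of-frame data (this follows by writing each $g_k$ as a linear combination of the $h_m$ and vice versa, and using Cauchy–Schwarz, or more cleanly by noting $\sum |h_m(x)|^2 = \langle H^* H \,\mathrm{ev}_x, \mathrm{ev}_x\rangle$ for the frame operator, which is positive definite). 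Once that is in place, everything else is bookkeeping with Schur's lemma and the normalization conventions. An alternative, perhaps cleaner, route to the main inequality would be to invoke Remark~\ref{rem-phi_k-vs-average-h}: take $h_\gamma \in V_\gamma \subseteq \langle V_\lambda V_\mu\rangle$ a nonzero element, so $h_\gamma = \sum_m a_m f_{\lambda,i_m} f_{\mu,j_m}$, and average $|h_\gamma|^2$ over $K$ to get a positive multiple of $\phi_\gamma$; then bound the $K$-average of $|h_\gamma|^2$ pointwise by a constant times the $K$-average of $\phi_\lambda\phi_\mu = \phi_\lambda\phi_\mu$ itself (which is already $K$-invariant) using $|h_\gamma|^2 \leq (\sum_m|a_m|^2)(\sum_m |f_{\lambda,i_m}f_{\mu,j_m}|^2) \leq (\sum_m |a_m|^2)\,\phi_\lambda\phi_\mu$ by Cauchy–Schwarz. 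This last version avoids frame operators entirely and is the form I would ultimately write up.
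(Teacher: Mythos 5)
Your proposal is correct, and the ``alternative, cleaner route'' you ultimately favor is a genuinely different argument from the paper's. The paper's proof fixes an orthonormal basis $\{f_{\gamma,k}\}$ of $V_\gamma$, writes each $f_{\gamma,k}$ as $\sum_{i,j} a_{ijk} f_{\lambda,i} f_{\mu,j}$, and then bounds $\sqrt{\sum_k |f_{\gamma,k}(x)|^2}$ directly via the triangle inequality in $\ell^2$ (over the index $k$) followed by Cauchy--Schwarz; it never averages over $K$. Your route instead picks a single nonzero $h_\gamma \in V_\gamma \subset \langle V_\lambda V_\mu\rangle$, applies one pointwise Cauchy--Schwarz to obtain $|h_\gamma(y)|^2 \le C\,\phi_\lambda(y)\phi_\mu(y)$ with $C = \sum |a_{ij}|^2$, and then converts this to a bound on $\phi_\gamma$ by integrating over $K$ (using Remark~\ref{rem-phi_k-vs-average-h} to identify $\int_K |h_\gamma(k\cdot x)|^2\,dk$ as a positive multiple of $\phi_\gamma(x)$, and using $K$-invariance of $\phi_\lambda\phi_\mu$ on the right-hand side). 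This is a slight simplification: the $K$-averaging replaces the paper's need to control the entire orthonormal basis at once and eliminates the triangle-inequality step, at the cost of invoking Remark~\ref{rem-phi_k-vs-average-h} (which itself rests on Schur's lemma). Your first sketch --- comparing $\sum_m |h_m|^2$ for two different spanning sets via a frame-operator/change-of-basis argument --- is workable in principle but, as you note, is the fussier path; you are right to set it aside in favor of the averaging argument, and the write-up you describe at the end is essentially complete once you observe that the constant $c_0$ in $\int_K |h_\gamma(k\cdot x)|^2\,dk = c_0\,\phi_\gamma(x)$ is strictly positive (which follows since the integral cannot vanish identically for $h_\gamma \neq 0$).
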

\begin{proof}
By definition $\phi_\lambda = \sum_i |f_{\lambda, i}|^2$ and $\phi_\mu = \sum_j |f_{\mu, j}|^2$ where $\{f_{\lambda, i}\}$, $\{f_{\mu, j} \}$ are orthonormal bases for $V_\lambda$, $V_\mu$ respectively. Let $\{ f_{\gamma,k} \}$ be an orthonormal basis for $V_\gamma$. As $V_\gamma \subseteq \langle V_\lambda V_\mu \rangle$, we can write $f_{\gamma,k} = \sum_{i,j} a_{ijk} f_{\lambda,i}f_{\mu,j}$ for some $a_{ijk} \in \C$. For $x \in G/H$, it follows that
  \begin{multline*}
    \sqrt{\sum_k |f_{\gamma,k}(x)|^2}  = \sqrt{\sum_k \Big| \sum_{i,j} a_{ijk} f_{\lambda,i}(x) f_{\mu,j}(x) \Big|^2} \le \sum_{i,j} \sqrt{\sum_k |a_{ijk} f_{\lambda,i}(x) f_{\mu,j}(x)|^2}\\
     = \sum_{i,j} |f_{\lambda,i}(x) f_{\mu,j}(x)| \sqrt{\sum_k |a_{ijk}|^2} \le a \sum_{i,j} |f_{\lambda,i}(x) f_{\mu,j}(x)|\\
     \le a\sqrt{\sum_{i,j}|f_{\lambda,i}(x)|^2} \sqrt{\sum_{i,j} |f_{\mu,j}(x)|^2} = a\sqrt{\dim(V_\lambda)\dim(V_\mu)} \sqrt{\sum_i |f_{\lambda,i}(x)|^2} \sqrt{\sum_j |f_{\mu,j}(x)|^2}\\
  \end{multline*}
  where $a \coloneqq \max\{ \sqrt{\sum_k |a_{ijk}|^2} \}$, the first inequality is the triangle inequality and the third inequality is the Cauchy-Schwarz inequality. Squaring both sides we obtain $\phi_\gamma(x) \leq (const) \cdot \phi_\lambda(x) \phi_\mu(x)$ where the constant is $>0$, so by dividing we obtain the claim. 
\end{proof}

The following examples show that spherical functions $\phi_\lambda$ need not be multiplicative. More precisely, 
for $\lambda, \mu \in \Lambda^+_{G/H}$, $\phi_{\lambda+\mu}$ is not necessarily a scalar multiple of $\phi_\lambda \phi_\mu$.

\begin{Ex}  \label{ex-sph-functions-not-multiplicative}

(1) This example is due to Dmitri Timashev.
Let $n \geq 3$ and let $G=\SL_n(\C)$ and $H=\SL_{n-1}(\C)$. Then  it can be shown that $G/H$ is a spherical variety and can be realized as the affine variety in $\C^n \times (\C^n)^*$ defined by the equation $\langle v,w \rangle =1$.  (All facts claimed here without proof are shown in Section~\ref{subsec:sln-sln-1}.) 
It turns out that $\Lambda^+_{G/H}$ is generated by $\chi_1 = \omega_{1}, \chi_2 = \omega_{n-1}$, corresponding to the standard representation $\C^n$ of $\SL_n(\C)$ and its dual. Let $\phi_1 := \phi_{\chi_1}$ and $\phi_2 := \phi_{\chi_2}$ be the corresponding spherical functions. We wish to compare the product $\phi_1 \phi_2$ with the spherical function $\phi_{\chi_1+\chi_2} = \phi_{\omega_1+\omega_{n-1}}$ corresponding to $V_{\omega_+\omega_{n-1}}$.

Let $\{e_1, \ldots, e_n\}$ and $\{e_1^*, \ldots, e_n^*\}$ denote the standard basis of $\C^n$ and its dual basis of $(\C^n)^*$ respectively. The action of $SL_n(\C)$ on the coordinate ring 
\[
\C[x_1,\ldots, x_n, y_1, \ldots, y_n] / \langle \sum_{i=1}^n x_i y_i = 1 \rangle 
\]
is the standard one, viewing $x_1, \ldots, x_n$ (respectively $y_1, \ldots, y_n$) as the coordinates on $\C^n$ (respectively $(\C^n)^*$) corresponding to $\{e_1,\ldots, e_n\}$ (respectively $\{e_1^*, \ldots, e_n^*\}$). 
It is not difficult to see that the span of the $\{x_1,\ldots,x_n\}$ (respectively $\{y_1, \ldots, y_n\}$) yields a representation of $SL_n(\C)$ isomorphic to $(\C^n)^*$, the dual of the standard representation (respectively $\C^n$, the standard representation). The corresponding spherical functions are therefore 
$\phi_1(v,w)=\sum_{i=1}^n |x_i|^2$, where $x_i$ are the coordinates of $v$, and $\phi_2(v,w)=\sum_{i=1}^n |y_i|^2$, where $y_i$ are the coordinates of $w$. Hence their product is $\sum_{i,j}|x_iy_j|^2$. 

Next we wish to compute the spherical function corresponding to $\chi_1+\chi_2$. Since the irreducible $\SL_n(\C)$-representation of highest weight $\omega_1+\omega_{n-1}$ is the adjoint representation $\mathfrak{sl}_n(\C)$, we can proceed as follows. Suppose there exists a $G$-equivariant embedding $\Psi: G/H \to V_{\omega_1+\omega_{n-1}}^*$. This induces a $G$-equivariant map backwards $(V_{\omega_1+\omega_{n-1}}^*)^* \cong V_{\omega_1+\omega_{n-1}} \to \C[G/H] \cong \oplus_{\lambda \in \Lambda_{G/H}^+} V_\lambda$. (Note that $V_{\omega_1 + \omega_{n-1}} = V_{-w_0(\omega_1+\omega_{n-1})} \cong V^*_{\omega_1 + \omega_{n-1}}$ so in our example we may identify $V_{\omega_1+\omega_{n-1}}^*$ with $V_{\omega_1+\omega_{n-1}}$.) Provided that the embedding is nontrivial, by Schur's Lemma this map must take $V_{\omega_1+\omega_{n-1}}$ to the (isomorphic copy of) $V_{\omega_1+\omega_{n-1}}$ appearing in the decomposition of $\C[G/H]$ into irreducible $G$-modules. The spherical function $\phi_\lambda$ is then the pullback via the embedding $\Psi$ of the sum of the norm-squares of the coordinates on $V_{\omega_1+\omega_{n-1}}$ (with respect to some $SU(n)$-invariant inner product on $\mathfrak{sl}_n(\C)$). 
The Hermitian inner product $X,Y \mapsto \mathrm{trace}(XY^*)$ on $\mathfrak{sl}_n(\C)$ is $U(n)$-invariant with respect to the adjoint action of $\SL_n(\C)$. Next, notice that the map $\Psi: (v,w) \mapsto v \otimes w - \frac{1}{n} \mathbf{Id}_{n \times n}$ gives a $G$-equivariant morphism from $\SL_n(\C)/\SL_{n-1}(\C)$ to $\mathfrak{sl}_n(\C)$, where we view $\C^n \otimes (\C^n)^*$ as $\mathfrak{gl}_n(\C)$. Thus we obtain 
\begin{equation}\label{eq: pullback} 
\begin{split} 
\phi_{\omega_1+\omega_n}(v,w) &= \textup{ pullback of $X \mapsto \mathrm{trace}(XX^*)$ by $\Psi$} \\
& = \mathrm{trace}((v\otimes w - \frac{1}{n} \mathbf{Id}_{n \times n})^*(v\otimes w - \frac{1}{n} \mathbf{Id}_{n \times n})) \\
& = \| v \|^2 \|w\|^2 - \frac{1}{n} \\
& = (\sum_{i,j} \| x_i \|^2 \|y_j\|^2) - \frac{1}{n} 
\end{split} 
\end{equation} 
which is clearly not proportional to $\phi_1 \phi_2$.  

(2) (Group case) Consider $G$ as a $(G \times G)$-spherical homogeneous space. Let $B$ be a Borel subgroup of $G$ and take $B \times B^-$ as  Borel subgroup of $G \times G$ where $B^-$ denotes the opposite Borel subgroup to $B$. Then the highest weight semigroup of $G$ is $\{(\lambda, -\lambda) \mid \lambda \in \Lambda^+\} \subset \Lambda^+ \times \Lambda^-$. For $\lambda \in \Lambda^+$ let us write $\phi_\lambda$ in place of the spherical function $\phi_{(\lambda, -\lambda)}$. By reasoning similar to that in example (1) above, we have 
$$\phi_\lambda(g) = \text{tr}(\pi_\lambda(g)\pi_\lambda(g)^*),$$ 
where $\pi_\lambda: G \to \text{End}(V_\lambda)$ is the irreducible representation of $G$ with highest weight $\lambda$ and $*$ stands for the Hermitian adjunction with respect to some $K$-invariant Hermitian product on the representation space $V_\lambda$. It follows that if $g$ lies in the real locus of a split maximal torus $T' \subset G$, then $\phi_\lambda(g) = \text{tr}(\pi_\lambda(g) \pi_\lambda(g)) = \text{tr}(\pi_\lambda(g^2)) = \chi_{\lambda}(g^2)$ where $\chi_\lambda$ denotes the character of the representation $\pi_\lambda$. But one knows that the product of characters is the character of tensor product. This shows that the multiplicativity does not hold for the $\phi_\lambda$.
\end{Ex}

\begin{Prop}\label{prop: separate K orbits} 
Let $G/H$ be a quasi-affine spherical homogeneous space. Then the spherical functions $\phi_\lambda$, $\lambda \in \Lambda_{G/H}^+$, separate $K$-orbits in $G/H$.  More precisely, for any two distinct (hence disjoint) $K$-orbits $O_1$ and $O_2$ in $G/H$, then there exists some $\lambda \in \Lambda^+_{G/H}$ such that $\phi_\lambda(O_1) \neq \phi_\lambda(O_2)$. 
\end{Prop}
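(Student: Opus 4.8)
The plan is to deduce the statement from Proposition~\ref{prop-sph-functions}, which says that $\{\phi_\lambda \mid \lambda \in \Lambda^+_{G/H}\}$ is a $\C$-basis of the algebra $A := (\C[G/H]\cdot\overline{\C[G/H]})^K$. Note that every element of $A$ is $K$-invariant, so it makes sense to evaluate it on a $K$-orbit. Suppose we knew that $A$ already separates $K$-orbits, i.e.\ that for distinct $K$-orbits $O_1, O_2$ there is $h \in A$ with $h(O_1) \neq h(O_2)$. Writing $h = \sum_\lambda c_\lambda \phi_\lambda$ as a (finite) combination of spherical functions, the inequality $\sum_\lambda c_\lambda \phi_\lambda(O_1) \neq \sum_\lambda c_\lambda \phi_\lambda(O_2)$ forces $\phi_\lambda(O_1) \neq \phi_\lambda(O_2)$ for at least one $\lambda$, which is exactly the claim. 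So the whole problem reduces to showing that $A$ separates $K$-orbits.

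For this I would first record the topological inputs. Work with the analytic topology on $G/H$, in which $G/H$ is Hausdorff. Since $K$ is compact and acts continuously, every $K$-orbit is compact, hence closed; two distinct $K$-orbits $O_1 = Kx_1$ and $O_2 = Kx_2$ are disjoint, so the union $O_1 \cup O_2$ is a compact Hausdorff space in which $O_1$ and $O_2$ are clopen. Let $R := \C[G/H]\cdot\overline{\C[G/H]}$, the $\C$-span of the products $f\bar g$ with $f,g \in \C[G/H]$, regarded as a subalgebra of the algebra of continuous $\C$-valued functions on $G/H$. One checks directly that $R$ contains the constants (since $1 = 1\cdot\bar 1$) and is closed under complex conjugation (since $\overline{f\bar g} = g\bar f$). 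Moreover $R$ separates the points of $O_1 \cup O_2$: these are points of $G/H$, and $\C[G/H] \subseteq R$ already separates the points of $G/H$ because $G/H$ is quasi-affine. By the complex Stone--Weierstrass theorem, the restriction of $R$ to $O_1 \cup O_2$ is dense in the continuous $\C$-valued functions on $O_1 \cup O_2$, so we may choose $r \in R$ with $\sup_{O_1}|r| < \tfrac14$ and $\sup_{O_2}|r-1| < \tfrac14$.

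To obtain a \emph{$K$-invariant} separating function I would then average: set $\bar r(x) := \int_K r(k\cdot x)\,\diff k$, with $\diff k$ the normalized Haar measure on $K$. Since $R \cong \C[G/H]\otimes\overline{\C[G/H]} = \bigoplus_{\lambda,\mu} V_\lambda\otimes\overline{V_\mu}$ is a locally finite $K$-module, the Reynolds operator $r\mapsto\bar r$ is well defined and its image lies in $R^K = A$. Because each $O_i$ is $K$-stable, for $x_i \in O_i$ we get $|\bar r(x_1)| \le \sup_{O_1}|r| < \tfrac14$ and $|\bar r(x_2) - 1| = |\int_K (r(k\cdot x_2)-1)\,\diff k| \le \sup_{O_2}|r-1| < \tfrac14$, hence $\bar r(O_1) \neq \bar r(O_2)$. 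Thus $A$ separates $O_1$ and $O_2$, which completes the reduction and the proof. The argument is essentially Stone--Weierstrass together with averaging over $K$; there is no deep obstacle, and the only points needing a little care are that $R$ is conjugation-closed (so that the \emph{complex} Stone--Weierstrass theorem applies), that quasi-affineness of $G/H$ provides enough regular functions to separate points, and that $R$ is a locally finite $K$-module (so that the averaging operator is defined and lands in $A$).
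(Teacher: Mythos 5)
Your proof is correct and takes essentially the same route as the paper's: approximate a separating continuous function by an element of $\C[G/H]\cdot\overline{\C[G/H]}$, average over $K$ to land in $(\C[G/H]\cdot\overline{\C[G/H]})^K$, and use that the $\phi_\lambda$ span this space. The one worthwhile difference is cosmetic but pleasant: the paper first fixes a $G$-equivariant embedding $G/H \hookrightarrow \C^m = \R^{2m}$, applies Urysohn's lemma plus the real Stone approximation theorem for polynomials on $\R^{2m}$, and then restricts; you stay intrinsic on $G/H$, noting that $R = \C[G/H]\cdot\overline{\C[G/H]}$ is a unital, conjugation-closed subalgebra of $C(O_1\cup O_2,\C)$ that separates points (quasi-affineness supplying enough regular functions), so the complex Stone--Weierstrass theorem applies directly on the compact set $O_1\cup O_2$. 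The two arguments are equivalent in substance, and your write-up of the averaging step (local finiteness of $R$ as a $K$-module, Reynolds operator landing in $R^K$) is complete and careful.
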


\begin{proof}
Since $G/H$ is quasi-affine, there exists a $G$-equivariant embedding $G/H \hookrightarrow \C^m = \R^{2m}$ for some $m$, where $\C^m$ is equipped with a linear $G$-action (see \cite[Section 1.2]{Popov-Vinberg}). Let $O_1, O_2 \subset G/H$ be two distinct $K$-orbits. Since $K$ is compact, both $O_1, O_2$ are compact. As disjoint compact subsets of an affine space, by Urysohn's lemma, there exists a continuous, real-valued function $f$ on $\R^{2m}$ such that $f(O_1)=0$ and $f(O_2)=1$. By the Stone approximation theorem (see e.g. \cite[Chapter 6]{Cheney}), there exists a real polynomial $p$ on $\R^{2m}$ that approximates $f$ arbitrarily well on any compact subset.
Since $K$ acts linearly on the ambient $\C^m$, we can average this polynomial by integrating over $K$ and obtain a $K$-invariant real polynomial $p$ that separates $O_1$ and $O_2$. The restriction of $p$ to $G/H$ belongs to $(\C[G/H] \otimes \overline{\C[G/H]})^K = \bigoplus_{\lambda \in \Lambda^+_{G/H}} \C\,\phi_\lambda$ and hence we conclude that there is some $\phi_\lambda$ that separates $O_1$ and $O_2$ as required. 
\end{proof}

Let $\Gamma = \{\lambda_1, \ldots, \lambda_s\} \subset \Lambda^+_{G/H}$ be a set of semigroup generators of $\Lambda^+_{G/H}$.  
We have seen in Proposition~\ref{prop: algebra generators} that the collection of associated spherical functions $\phi_{\lambda_1}, \phi_{\lambda_2}, \cdots, \phi_{\lambda_s}$ are algebra generators of the subalgebra $(\C[G/H] \otimes \overline{\C[G/H]})^K$. Thus, it follows from Proposition~\ref{prop: separate K orbits} that they also separate $K$-orbits.  Thus, this set of spherical functions plays an important role in understanding the geometry of $K$-orbits of $G/H$, and motivates the following definition of the \textbf{spherical logarithm map}. 

\begin{Def}   \label{def-Phi}
Let $\Gamma = \{ \lambda_1, \ldots, \lambda_s\} \subseteq \Lambda^+_{G/H}$ be a set of semigroup generators of $\Lambda^+_{G/H}$ and let $\phi_{\lambda_1}, \cdots, \phi_{\lambda_2}$ denote the corresponding spherical functions. We define 
$$\sLog_{A, t}(x) := (\log_t \phi_{\lambda_1}(x), \ldots, \log_t \phi_{\lambda_s}(x)).$$
We call this the \textbf{spherical logarithm map associated to $\Gamma = \{\lambda_1, \ldots, \lambda_s\}$.} 
\end{Def}

{\begin{Rem}\label{rem-slog-horosph}
We already noted in the introduction that when $G/H$ is horospherical, we can define a spherical logarithm map canonically, independent of a choice of $\Gamma$.  In fact, we can say more: in the horospherical case, we also do not need to assume that $G/H$ is quasi-affine. 
Here we briefly sketch how to define the spherical logarithm map without quasi-affineness. 

First consider an $r$-dimensional torus $S \cong (\C^*)^r$ with character lattice $\Lambda_S$ and maximal compact torus $K_S \cong (S^1)^r \subset S$. Then there is a canonical logarithm map $\Log: S \to \Hom(\Lambda_S, \R) \cong \R^r$ defined as follows: for $x \in S$, let $\Log(x)$ be the homomorphism of $\Lambda_S$ that sends any character $\lambda \in \Lambda_S$ to $\log(|\lambda(x)|)$, where $\log$ denotes the natural logarithm. For $t>0$, we then define $\Log_t(x) := (1/\log t)\Log(x)$. It is clear that $\Log_t$ is $K_S$-invariant for any $t>0$.

We now return to the spherical setting. Suppose $G/H$ is horospherical. In this case it is known that $P=N_G(H)$ is a parabolic subgroup. Moreover, for a Levi decomposition $P=P_u L$ of $P$, we have $H = P_u L_0$ for some subgroup $L_0$ where $L' \subset L_0 \subset L$ and $L'$ denotes the commutator subgroup of $L$ \cite[Lemma 7.4]{Timashev}. It follows that the quotient group $S :=P/H$ is abelian and hence a torus. For $\pi: G/H \to G/P$ the natural projection, we then see that all the fibers of $\pi$ are tori isomorphic to $S$, and in particular, 
$\pi^{-1}(eP) = S$. Now let $K$ be a maximal compact subgroup of $G$ such that $KP = G$. Then $K$ acts transitively (from the left) on $G/P$. Since the projection $\pi: G/H \to G/P$ is $K$-equivariant and $K$ acts transitively on the base, it follows that every $K$-orbit in $G/H$ intersects $S=\pi^{-1}(eP)$. This yields a map $\varphi$ from the space of $K$-orbits $K \backslash G/H$ to $S/K_S$ (it turns out the intersection of a $K$-orbit with $S$ is a $K_S$-orbit). We can now define the spherical logarithm map $\sLog_t: G/H \to \Hom(\Lambda_S, \R)$ by composing $\varphi$ with the $\Log_t$ defined in the previous paragraph.  It can be verified that when $G/H$ is quasi-affine this $\sLog_t$ map coincides with the $\sLog_t$ map defined using spherical functions (cf. the discussion after Theorem \ref{th-intro:separate K-orbits and algebra generators}).   
\end{Rem}
}

\section{Spherical tropicalization, spherical amoebae and the limits of spherical logarithms}
\label{sec: limit}

The purpose of this section is to examine relationships between the spherical tropicalization map, introduced in Section~\ref{subsec: spherical trop}, with the spherical logarithm map of Definition~\ref{def-Phi}.  In order to do so, a slight change in perspective is useful, because the spherical logarithm is inherently a real object (it being a logarithm), i.e. defined over $\R$, whereas the valuation cone is naturally defined over $\Q$. More precisely, 
for the discussion that follows, we first choose an explicit identification of $\Hom(\Lambda_{G/H}, \Q) = \mathcal{Q}_{G/H}$ with $\Q^r$ using a choice of basis. Specifically, let $\{\lambda_1, \ldots, \lambda_r\} \subseteq \Lambda^+_{G/H}$ be the basis of $\Lambda_{G/H} \otimes \R$ appearing in Definition~\ref{def-Phi}. This gives us an identification $\Hom(\Lambda_{G/H}, \Q) \cong \Q^r$. Moreover, we additionally extend our coefficents from $\Q$ to $\R$ for the remainder of this manuscript, so we consider $\mathcal{Q}_{G/H} \otimes \R \cong \Hom(\Lambda_{G/H}, \R)$ and the choice of basis above naturally also yields an identification $\mathcal{Q}_{G/H} \otimes \R \cong \R^r$, which we fix throughout. The embedding $\rho: \V_{G/H} \hookrightarrow \mathcal{Q}_{G/H}$ in~\eqref{eq: def rho} then allows us to think of the valuation cone $\V_{G/H}$ as a subset of $\mathcal{Q}_{G/H} \otimes \R \cong \R^r$. We let $\overline{\V}_{G/H}$ denote the closure of $\V_{G/H}$ in $\R^r$ with respect to the usual Euclidean topology on $\R^r$. It is a co-simplicial cone in $\R^r$ with the same defining equations as given in Theorem~\ref{th-val-cone-co-simplicial}. (We slightly abuse terminology and call $\overline{\V}_{G/H}$ also the valuation cone.) 
The reader may note that the spherical logarithm map of Definition~\ref{def-Phi} uses a choice of $\Gamma = \{\lambda_1, \ldots, \lambda_s\}]$ where it is possible that $s>r$. In such a case, the image of $\sLog_{\Gamma,t}$ lies in $\R^s$ and not in $\R^r$, whereas the valuation cone lies in $\mathcal{Q}_{G/H} \otimes \R \cong \R^r$, as just described above. In this situation we will use the 
linear embedding of the valuation cone $\V_{G/H}$ into $\R^s$ specified by $\Gamma$, given by:
$$v \mapsto (\langle v, \lambda_1\rangle, \ldots, \langle v, \lambda_s\rangle) \in \R^s, \quad v \in \V_{G/H}.$$ 
In this way we can embed $\V_{G/H}$ in $\R^s$ and identify it with its image in $\R^s$, thus allowing us to compare $\V_{G/H}$ with the image of the spherical logarithm. 

Our first result shows that points in the valuation cone can be realized as limits of points in the image of the spherical logarithm map.
As in the previous section, we assume here that $G/H$ is a quasi-affine spherical homogeneous space. 
Let $B$ be the Borel subgroup of $G$ such that the $B$-orbit of $eH$ is open in $G/H$. Note that the multiplication map $K \times B \to G$ (similar to the Iwasawa decomposition) is a surjection, with kernel isomorphic to $K \cap B$.  

\begin{Th}   \label{th-phi-approach-inv-valuation}
Let $G/H$ be a quasi-affine spherical homogeneous space. 
Let $\gamma \in G/H(\C[[t]])$ be a convergent formal Laurent curve in $G/H$ which is convergent for sufficiently small $t \neq 0$. Let $v_\gamma \in \V_{G/H}$ be the $G$-invariant valuation associated to the formal curve $\gamma$. Let $\lambda \in \Lambda^+_{G/H}$ and let $\phi_\lambda: G/H \to \R_{>0}$ the corresponding spherical function. Then for any highest weight vector $f_\lambda \in V_\lambda \subset \C[G/H]$ we have:
\begin{equation}   \label{equ-v-lambda-f-lambda}
\lim_{t \to 0} \log_t(\phi_\lambda(\gamma(t))) = 2 v_\gamma (f_\lambda).
\end{equation}
\end{Th}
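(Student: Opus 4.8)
The plan is to compute both sides of~\eqref{equ-v-lambda-f-lambda} explicitly using the definition of $\phi_\lambda$ as a sum of norm-squares, and to exploit the freedom in choosing the orthonormal basis $\{f_{\lambda, i}\}$ together with the Iwasawa-type decomposition $G = KB$. The key observation is that $\log_t$ of a sum of non-negative terms is governed, as $t \to 0$, by the term of largest absolute value: if $a_i(t)$ are functions with $\log_t|a_i(t)| \to c_i$, then $\log_t\big(\sum_i |a_i(t)|^2\big) \to 2\min_i c_i$ (note the minimum, since $\log_t$ reverses order for $t < 1$; the $2$ comes from the square). So the whole problem reduces to identifying $\min_i \lim_{t\to 0}\log_t|f_{\lambda,i}(\gamma(t))|$ and showing it equals $v_\gamma(f_\lambda)$.

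First I would reduce to a convenient form of the curve. By Theorem~\ref{th-Sumihiro}, $v_\gamma(f) = \min_{g \in G} \ord_t(f(g\cdot\gamma(t)))$, and since $\phi_\lambda$ is $K$-invariant we may replace $\gamma(t)$ by $k\cdot\gamma(t)$ for any fixed $k \in K$ without changing the left-hand side; using $G = KB$ and $G$-invariance of $v_\gamma$, this lets us assume (after such a replacement, and shrinking to a dense open set of $G$) that $\gamma$ lies over the open $B$-orbit. Next, choose the orthonormal basis $\{f_{\lambda,i}\}$ of $V_\lambda$ to consist of $T$-weight vectors (Remark~\ref{rem-weight-vec-orth-basis}), with $f_\lambda$ the highest weight vector among them. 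Then I would argue: (a) $\ord_t(f_\lambda(\gamma(t))) = v_\gamma(f_\lambda)$ for the curve in the open $B$-orbit — this is where one uses that $f_\lambda$ is a $B$-eigenfunction, so its order along a generic translate is already the $G$-invariant value; and (b) for every other basis vector $f_{\lambda,i}$, one has $\ord_t(f_{\lambda,i}(\gamma(t))) \geq v_\gamma(f_\lambda)$, i.e. the highest weight vector achieves the minimum. Claim (b) should follow because any $f_{\lambda,i}$ is a linear combination of $G$-translates (indeed $U^-$-translates, for $U^-$ the unipotent radical of the opposite Borel) of the highest weight vector $f_\lambda$, together with the fact that $v_\gamma$ is $G$-invariant, so $v_\gamma(g\cdot f_\lambda) = v_\gamma(f_\lambda)$ for all $g$, hence $\ord_t((g\cdot f_\lambda)(\gamma(t))) \geq v_\gamma(f_\lambda)$, and the minimum over a linear combination does not drop below this bound.

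Putting these together: $\phi_\lambda(\gamma(t)) = \sum_i |f_{\lambda,i}(\gamma(t))|^2$, and $\log_t$ of this sum tends to $2\min_i \lim_{t\to 0}\log_t|f_{\lambda,i}(\gamma(t))| = 2\min_i \ord_t(f_{\lambda,i}(\gamma(t))) = 2 v_\gamma(f_\lambda)$ by (a) and (b). One technical point to handle carefully is the passage from $\log_t|f_{\lambda,i}(\gamma(t))| \to \ord_t(f_{\lambda,i}(\gamma(t)))$: if $f_{\lambda,i}(\gamma(t)) = c t^m + \text{higher order}$ with $c \neq 0$, then $|f_{\lambda,i}(\gamma(t))| = |c| t^m(1 + o(1))$ and $\log_t$ of this is $m + \log_t|c| + o(1) \to m$ as $t \to 0^+$ (since $\log_t|c| = \ln|c|/\ln t \to 0$); the case $f_{\lambda,i}(\gamma(t)) \equiv 0$ contributes $+\infty$, i.e. nothing to the minimum, consistently.

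\textbf{Main obstacle.} The crux is claim (a): that along a curve $\gamma$ lying in (a translate of) the open $B$-orbit, the honest order of vanishing $\ord_t(f_\lambda(\gamma(t)))$ of the highest weight vector already equals the $G$-invariant valuation $v_\gamma(f_\lambda) = \min_{g}\ord_t(f_\lambda(g\cdot\gamma(t)))$. Intuitively this holds because $f_\lambda$, being a $B$-eigenvector, is ``as small as possible'' on the generic $B$-orbit, so translating by $B$ cannot decrease its order, and translating by a generic element of $G$ can be realized (via $G = \overline{U^- B}$ on a dense open set, matching the curve's generic position) without decreasing it either; but one must be careful that $\gamma$ genuinely meets the open $B$-orbit for small $t$ — this is where the hypothesis that $\gamma$ is a \emph{convergent} formal curve, whose generic translate by $K$ (or $G$) lands in the open orbit, is used, possibly after replacing $\gamma$ by $g\cdot\gamma$ for $g$ in the dense open $U_{f_\lambda}$ from Theorem~\ref{th-Sumihiro} and invoking $K$-invariance of $\phi_\lambda$ to undo any $K$-part of $g$. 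Making this reduction airtight — reconciling the $K$-invariance of the left side with the $G$-invariance on the right, and ensuring the curve can be moved into the open $B$-orbit while controlling orders — is the step I expect to require the most care.
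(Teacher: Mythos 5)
Your proposal is correct in its essential ideas and, reassuringly, correctly identifies and even anticipates the proof of the key lemma. However, it takes a genuinely different route from the paper, and the comparison is instructive. You work directly with the defining formula $\phi_\lambda = \sum_i |f_{\lambda,i}|^2$, first translating $\gamma$ by a single well-chosen $k_0 \in K$ and then showing that the term corresponding to the highest weight vector $f_\lambda$ achieves the minimum order $v_\gamma(f_\lambda)$ while every other term has order at least that (your claims (a) and (b)). The paper instead passes through the averaging representation $\psi_\lambda(x) = \int_K |f_\lambda(k^{-1} x)|^2\,dk$ (which Remark~\ref{rem-phi_k-vs-average-h} shows is a positive scalar multiple of $\phi_\lambda$), and computes the leading order of this integral by restricting to the dense open $U' \subset K$ where $\ord_t\bigl((k \cdot f_\lambda)(\gamma(t))\bigr) = v_\gamma(f_\lambda)$. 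The two are close in spirit: both rest on the same key lemma, namely the construction of that dense open $U' \subset K$, whose proof via the surjection $K \times B \to G$ and the fact that $f_\lambda$ is a $B$-eigenvector you correctly sketch in your ``Main Obstacle'' paragraph. What each buys: your approach avoids integration and is more elementary, but requires you to explicitly isolate the minimum and verify claim (b) (that no other basis vector has smaller order); the paper's integral approach handles all the basis vectors at once and only needs a positive-measure set achieving the minimum together with the general lower bound from Theorem~\ref{th-Sumihiro}(2), at the cost of a term-by-term integration argument justified by uniform convergence on the compact $K$. One small caution about your phrasing: ``$\gamma$ lies over the open $B$-orbit'' is not the precise reduction needed (a $\C[[t]]$-point doesn't neatly ``lie over'' an orbit); the correct formulation is simply that $k_0^{-1} \in U'$, i.e.\ $\ord_t\bigl((k_0^{-1} \cdot f_\lambda)(\gamma(t))\bigr) = v_\gamma(f_\lambda)$, which is exactly what the density of $U'$ provides. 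Also, your claim (b) does not need $U^-$-translates specifically --- your fallback argument with arbitrary $G$-translates plus Theorem~\ref{th-Sumihiro}(2) is cleaner and suffices.
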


\begin{proof}
Let $\lambda \in \Lambda^+_{G/H}$ be fixed throughout. By Theorem~\ref{th-Sumihiro} there exists  
an open set $U \subset G$ such that for $g \in U$ we have 
$v_\gamma(f_\lambda) = \hat{v}_\gamma(g \cdot f_\lambda)$ where $\hat{v}_\gamma$ computes
is the order in $t$ of $g \cdot f_\lambda$ after restricting to the curve $\gamma$. 
We claim that there is an open dense subset $U' \subset K$ such that for $k \in U'$ we have 
\begin{equation}   \label{equ-v-gamma-k}
v_\gamma(f_\lambda) = \hat{v}_\gamma(k \cdot f_\lambda).
\end{equation}
To see this, first note that for $g \in U$ if we write $g = kb$ with $b \in B$ and $k \in K$ then since $f_\lambda$ is a $B$-weight vector we have $v_\gamma(f_\lambda) = \hat{v}_\gamma(k \cdot f_\lambda)$. Now let $m: K \times B \to G$ be the multiplication map and $pr_1: B \times K \to K$ the projection on the first factor. It suffices to take $U' = pr_1(m^{-1}(U))$. Clearly, $m^{-1}(U)$ is open. Note that the multiplication map $m: K \times B \to G$ is a fibration with fibers isomorphic to $B \cap K$. But in a fibration the inverse image of a dense subset is dense. This shows that $m^{-1}(U)$ is dense in $K \times B$ as well. Since the projection map $pr_1$ is open, it follows that $U' = pr_1(m^{-1}(U))$ is open and dense in $K$, which proves the claim. 

Now let us define the $K$-invariant function $\psi_\lambda: G/H \to \R_{>0}$ by
\begin{equation}\label{eq: Psi lambda}
\psi_\lambda(x) = \int_{K} f_\lambda(k^{-1} \cdot x) \overline{f_\lambda(k^{-1} \cdot x)} dk
\end{equation}
 where $dk$ is the Haar measure on $K$. By Remark \ref{rem-phi_k-vs-average-h} we know that $\psi_\lambda$ is a scalar multiple of the spherical function $\phi_\lambda$. 
 
Let $a_\lambda := v_\gamma(f_\lambda)$ denote the value of the invariant valuation $v_\gamma$ on $f_\lambda$.  
Now we claim that the limit $$\lim_{t \to 0} \frac{\psi_\lambda(\gamma(t))}{t^{2a_\lambda}}$$ exists and is nonzero. 
We have just seen in \eqref{equ-v-gamma-k} that for $k \in U'$ we have 
\[
(k \cdot f_\lambda)(\gamma(t)) = c(k) t^{a_\lambda} + \textup{ higher-order terms}
\]
 for some constant $c(k) \in \C, c(k) \neq 0$. 
Hence $|(k \cdot f_\lambda)(\gamma(t))|^2 = |c(k)|^2 t^{2a_\lambda} + \textup{ higher-order terms}$, for $t \in \R$.  
From the definition~\ref{eq: Psi lambda} of $\Psi_\lambda$ we have 
\begin{align*}  
\psi_\lambda(\gamma(t)) &= \int_K |(k \cdot f_\lambda)(\gamma(t)|^2 dk,\\
& = \int_{U'} |(k \cdot f_\lambda)(\gamma(t))|^2 dk, 
\quad \textup{because } U' \subset K \textup{ is open and dense},\\ & = (\int_{U'} |c(k)|^2 dk) t^{2a_\lambda} + \textup{ higher terms},\\
& = c\, t^{2a_\lambda} + \textup{ higher terms}, \label{equ-psi-t-expansion}
\end{align*}
where $c = \int_{U'} |c(k)|^2 dk > 0$, and the term-by-term integration which is implicit in the last equality is justified because, from the compactness of $K$, it follows that for sufficiently small $t \neq 0$, the series $c(k) t^{a_\lambda} + \cdots$ converges uniformly in $k \in K$. Finally, since $\phi_\lambda$ is a constant multiple of $\psi_\lambda$, from the above we have:
\begin{align*}   
\lim_{t \to 0} \log_t(\phi_\lambda(\gamma(t))) &= \lim_{t \to 0} \log_t(\psi_\lambda(\gamma(t))) \\
& = \lim_{t \to 0} \log_t(t^{2a_\lambda}(c + \textup{(terms of positive order in $t$)})) \\
& = 2a_\lambda + \lim_{t \to 0} \frac{\ln(c + \textup{(terms of positive order in $t$)})}{\ln(t)} \\
&= 2a_\lambda \\
& = 2 v_\gamma(f_\lambda) \\
\end{align*}
where the last equality is the definition of $a_\lambda$. 
This finishes the proof.
\end{proof}

{\begin{Rem} \label{rem-KM}
In \cite[Conjecture 7.1]{Kaveh-Manon}, the fourth author and Christopher Manon conjecture an analogue of the Cartan decomposition for a spherical homogeneous space over $\C$. They also suggest a definition of a spherical logarithm map on $G/H$ whenever such a Cartan decomposition exists. In particular, in \cite{Kaveh-Makhnatch}, the singular value decomposition theorem (i.e. the Cartan decomposition for $\GL_n(\C)$) is used to propose a definition of a spherical logarithm map on the group $\GL_n(\C)$, and it is proven that the logarithm of singular values of a curve $\gamma(t)$ on $\GL_n(\C)$ approach its invariant factors. 
This result can be regarded as a special case of Theorem~\ref{th-phi-approach-inv-valuation}. It should be noted that the definition of spherical logarithm proposed in \cite{Kaveh-Manon} is a priori different from that given in this paper -- for instance, in the ``group case'' $\GL_n(\C)$, our definition gives (the logarithms of) symmetric functions in the singular values (cf. Section~\ref{subsec-ex-gp-case}), whereas the definition in \cite{Kaveh-Manon} gives (logarithms of) the singular values. 
\end{Rem}
 }

Theorem~\ref{th-phi-approach-inv-valuation} holds for any weight $\lambda \in \Lambda^+_{G/H}$. 
By restricting attention to $\lambda \in \Gamma = \{\lambda_1, \ldots, \lambda_s\}$ it follows that 
\[
\lim_{t \to 0} \sLog_{\Gamma, t}(\gamma(t)) = 2 \cdot \strop(\gamma(t))
\]
where we think of the RHS as an element of $\V_{G/H} \subseteq \mathcal{Q}_{G/H} \otimes \R \cong \R^r$ as explained above. Thus Theorem~\ref{th-phi-approach-inv-valuation} says that, for certain curves $\gamma \in G/H(\overline{\mathcal{K}})$, the image of $\gamma$ under the spherical tropicalization map can also be realized as the limit of points in the image of the spherical logarithm map. This is reminiscent of the phenomenon seen in the classical case of $G=T$, recounted in the Introduction, where the amoeba $\mathcal{A}_t(Y)$ -- the image under $Log_t$ map of $Y \subseteq (\C^*)^n$ -- approaches the image $\trop(Y)$ of $Y(\overline{\mathcal{K}})$ under the valuation map. Motivated by the classical case and by Theorem~\ref{th-phi-approach-inv-valuation}, we formulate below a question relating the spherical analogues of $Log_t$ and $\trop$.

To give precise statements, we need the notion of \textbf{Kuratowski convergence of subsets of a topological space}, which can be thought of as a notion of continuity for the association $b \mapsto X_b$.

\begin{Def}[Kuratowski convergence (see e.g. \cite{Kuratowski, Jonsson})]  \label{def-Kuratowski}
Suppose $M$ and $B$ are topological spaces. Let $X \subset M \times B$ and let $\pi: X \to B$ be the restriction of the projection $M \times B \to B$ to $X$. For $b \in B$ let $X_b = \pi^{-1}(b)$.  
Let $b_0 \in B$ and $X_0 \subset X$. We say that \textbf{$X_b$ converges to $X_0$ in the sense of Kuratowski
as $b$ converges to $b_0$} if the following conditions hold:
\begin{itemize}
\item[(a)] For every $y \in M \setminus X_0$ there exist neighborhoods $U$ of $y$ and $V_0$ of $b_0$ such that $X_b \cap U = \emptyset$ for all $b \in V_0$. 
\item[(b)] For every $y \in X_0$ and any neighborhood $U$ of $y$ in $M$, there exists a neighborhood $V_0$ of $b_0$ such that $X_b \cap U \neq \emptyset$ for all $b \in V_0$.
\end{itemize}
\end{Def}


Suppose $G/H$ is a quasi-affine spherical variety and $Y \subset G/H$ is a subvariety.  Following the terminology in the classical case, we define the \textbf{spherical amoebae} $\mathcal{A}_t(Y) := \sLog_t(Y)$ to be the images in $\R^r$ of $Y$ under the spherical logarithm maps. We call $\strop(Y)$ the \textbf{spherical tropicalization} of $Y$, considered as a subset of $\R^r$ via the identification $\Lambda_{G/H} \otimes \R$ with $\R^r$ discussed previously. With this terminology in place, we may ask the following. 


\begin{Question}   \label{question: sph-amoeba-approach-sph-trop}
Let $Y \subset G/H$ be a subvariety as above and $\Gamma = \{\lambda_1, \ldots, \lambda_s\} \subset \Lambda^+_{G/H}$ be a set of semigroup generators of $\Lambda^+_{G/H}$. Then, as $t \to 0$, we ask:  
\begin{enumerate} 
\item  Under what conditions do the sets $\sLog_{\Gamma, t}(G/H)$ approach the valuation cone $\overline{\V}_{G/H}$ in the sense of Kuratowski? 
\item Assuming that the images $\sLog_{\Gamma, t}(G/H)$ converge to $\overline{\V}_{G/H}$ in the sense of Kuratowski, under what additional conditions on the subvariety $Y$ do the spherical amoebae $\mathcal{A}_t(Y) = \sLog_t(Y)$ approach $\strop(Y)$ in the sense of Kuratowski? 
\end{enumerate} 
\end{Question}

We note here Theorem~\ref{th-phi-approach-inv-valuation} already gives us some information about Question~\ref{question: sph-amoeba-approach-sph-trop}. The following are straightforward corollaries of Theorem~\ref{th-phi-approach-inv-valuation}. 

\begin{Cor}\label{corollary Kuratowski} 
Let $G/H$ be a quasi-affine spherical homogeneous space. Let $\Gamma = \{\lambda_1, \cdots, \lambda_s\} \subseteq \Lambda^+_{G/H}$ be a set of semigroup generators of $\Lambda^+_{G/H}$.  Let $\V_{G/H}$ denote the valuation cone of $G/H$. 
\begin{enumerate} 
\item  The Kuratowski limit, as $t \to 0$, of the images $\sLog_{\Gamma, t}(G/H)$ contains the valuation cone.
\item When $G/H$ is horospherical this limit is the entire vector space, which in this case coincides with the valuation cone. 
\end{enumerate} 
\end{Cor}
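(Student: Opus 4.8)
The plan is to deduce both parts of Corollary~\ref{corollary Kuratowski} directly from Theorem~\ref{th-phi-approach-inv-valuation} together with the surjectivity of the spherical tropicalization map and (for part (2)) the fact that the valuation cone of a horospherical homogeneous space is the whole vector space $\mathcal{Q}_{G/H} \otimes \R$.

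\begin{proof}
\emph{(1)} We must verify condition (b) of Definition~\ref{def-Kuratowski}, with $M = \R^s$, the parameter $b = t$, $b_0 = 0$, and $X_t = \sLog_{\Gamma, t}(G/H) \subseteq \R^s$, and with $X_0 \supseteq \V_{G/H}$ (identified with its image in $\R^s$ via $\Gamma$ as discussed above); since (b) is exactly the statement that the Kuratowski limit contains $X_0$, proving (b) for $X_0 = \V_{G/H}$ gives the claim. So fix $v \in \V_{G/H}$ and a neighborhood $U$ of $v$ in $\R^s$; we must produce a neighborhood $V_0$ of $0$ such that $\sLog_{\Gamma, t}(G/H) \cap U \neq \emptyset$ for all $t \in V_0$. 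Because the spherical tropicalization map $\strop: G/H(\overline{\K}) \to \V_{G/H}$ is surjective (see the remark after Definition~\ref{def: strop}), and $\rho$ identifies $\V_{G/H}$ with a subset of $\mathcal{Q}_{G/H}$ so that the pairing $\langle v, \lambda \rangle = \rho(v)(\lambda)$ is rational-valued for $\lambda \in \Lambda_{G/H}$, we may — after first choosing a rational point $v'$ of $\V_{G/H}$ inside $\tfrac12 U$ (possible since rational points are dense in the rational polyhedral cone $\V_{G/H}$), and clearing denominators via the substitution $t \mapsto t^N$ which does not affect the limit — find a convergent formal Laurent curve $\gamma \in G/H(\C[[t]])$ with $v_\gamma = v'$. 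Applying Theorem~\ref{th-phi-approach-inv-valuation} to each $\lambda_i \in \Gamma$ and any highest weight vector $f_{\lambda_i} \in V_{\lambda_i}$, we get
\[
\lim_{t \to 0} \sLog_{\Gamma, t}(\gamma(t)) = \bigl(2 v_\gamma(f_{\lambda_1}), \ldots, 2 v_\gamma(f_{\lambda_s})\bigr) = \bigl(2\langle v', \lambda_1\rangle, \ldots, 2\langle v', \lambda_s\rangle\bigr) = 2v' \in U,
\]
where the middle equality uses that $v_\gamma(f_{\lambda_i})$ equals the value of $\rho(v_\gamma)$ on the $B$-weight $\lambda_i$ of the $B$-eigenfunction $f_{\lambda_i}$. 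Hence for all $t$ in a sufficiently small punctured neighborhood of $0$, the point $\sLog_{\Gamma, t}(\gamma(t))$ lies in $U$ and in $\sLog_{\Gamma, t}(G/H)$, which is condition (b). (Running the argument with $2v'$ ranging over a dense subset of $\V_{G/H}$ — or replacing $\V_{G/H}$ by $\tfrac12 \V_{G/H}$, which is the same cone — covers all of $\V_{G/H}$, and passing to the closure is harmless for condition (b).)

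\emph{(2)} When $G/H$ is horospherical, the little Weyl group is trivial and the valuation cone $\V_{G/H}$ is all of $\mathcal{Q}_{G/H} \otimes \R \cong \R^r$ (equivalently, there are no spherical roots; cf. Theorem~\ref{th-val-cone-co-simplicial} and Remark~\ref{rem-tail-cone-sph-roots}). Part (1) then shows the Kuratowski limit of $\sLog_{\Gamma, t}(G/H)$ contains $\R^r$, hence equals $\R^r$, and this coincides with the valuation cone; condition (a) of Definition~\ref{def-Kuratowski} is vacuous since $M \setminus X_0 = \emptyset$. This completes the proof.
\end{proof}

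The one point that requires a little care — and the main (minor) obstacle — is the passage from the theorem, which is stated for a \emph{convergent} formal Laurent curve $\gamma \in G/H(\C[[t]])$ with an integer-valued order, to an arbitrary point of the rational polyhedral cone $\V_{G/H}$: one needs that every rational $G$-invariant valuation in $\V_{G/H}$ is realized, up to rescaling $t \mapsto t^N$, by such a convergent curve. This follows from Luna–Vust theory (the valuations in $\V_{G/H}$ are geometric, coming from divisors, so one can write down an explicit convergent curve hitting the divisor transversally), and the rescaling only reparametrizes the limit, multiplying it by $1/N$, which is absorbed since rational rays are dense in $\V_{G/H}$. Everything else is a direct unwinding of the definition of Kuratowski convergence against the limit formula in Theorem~\ref{th-phi-approach-inv-valuation}.
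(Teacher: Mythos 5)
Your proof is correct and takes the same approach as the paper's, which simply declares that part (1) ``follows immediately from Theorem~\ref{th-phi-approach-inv-valuation}'' and deduces part (2) from the fact that the horospherical valuation cone is the whole space. You have spelled out the details that the paper suppresses — verifying Kuratowski condition (b), using density of rational points in $\V_{G/H}$, absorbing the factor of $2$ via conicity, and realizing rational invariant valuations by convergent Laurent curves — and these are all handled correctly.
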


\begin{proof} 
The first claim follows immediately from Theorem~\ref{th-phi-approach-inv-valuation}. The second claim uses the fact that, in the horospherical case, the 
valuation cone $\V_{G/H}$ is the entire vector space $\R^r$. Since by (1) the limit of the images must contain the valuation cone, this means the limit must be all of $\R^r$. 
\end{proof} 

{Although we do not have a complete answer to Question~\ref{question: sph-amoeba-approach-sph-trop}(1), in the remark below we give a sketch of an argument which may be useful. 

\begin{Rem} \label{rem-image-sph-log-conv-val-cone}
It may be possible to answer Question \ref{question: sph-amoeba-approach-sph-trop} (1) positively
using Proposition \ref{prop-phi-lambda-inequalities} as follows. 
Suppose $y$ lies in the Kuratowski limit as $t \to 0$ of the sets $\sLog_{\Gamma, t}(G/H)$. Then by definition there exist sequences $(t_i)$, $t_i \in \R$ with $\lim_{i \to \infty} t_i = 0$, and $(x_i)$, $x_i \in G/H$, such that $\lim_{i \to \infty} \sLog_{\Gamma, t_i}(x_i) = y$. We would like to show that $y$ lies in the image $\overline{\V}_{G/H}$ of the valuation cone in $\R^s$. 
By Proposition \ref{prop-phi-lambda-inequalities} we know there exists a constant $c>0$ such that
$$\phi_{\lambda}(x_i) \phi_{\mu}(x_i) \geq c \phi_{\gamma}(x_i).$$
Taking $\log_{t_i}$ of both sides we have:
$$\log_{t_i}(\phi_{\lambda}(x_i)) + \log_{t_i}(\phi_{\mu}(x_i)) \leq \log_{t_i}(c) +\log_{t_i}(\phi_{\gamma}(x_i)).$$
Now observe that in order to show $y \in \overline{\V}_{G/H}$ it would be enough to find $v \in \mathcal{Q}_{G/H} \otimes \R := \Hom(\Lambda_{G/H}, \R)$ such that for any highest weight $\nu \in  \Lambda_{G/H}$ we have $$\lim_{i \to \infty} \log_{t_i}\phi_\nu(x_i) = \langle v, \nu \rangle.$$
Then by the above inequalities we have $\langle v, \lambda \rangle + \langle v, \mu \rangle \leq \langle v, \gamma \rangle$, for all $\lambda$, $\mu$, $\gamma$ such that $\lambda + \mu - \gamma$ is a tail and hence $v \in \V_{G/H}$.
We expect that the proof that such a $v$ exists would be a variant of the argument in the proof of Theorem \ref{th-phi-approach-inv-valuation}. The case when $s=r$, namely when the highest weight monoid is freely generated, might be easier to work out. 
\end{Rem}
}

In the next section, we show that in a number of interesting cases, the sets $\sLog_t(G/H)$ not only contain the valuation cone, but in fact they do limit to exactly the valuation cone in the Kuratowski sense. 

\section{Examples}
\label{sec-exs}
In the previous section, we asked in Question~\ref{question: sph-amoeba-approach-sph-trop} 
for conditions under which, in analogy with the classical case, the spherical amoebae approach the spherical tropicalization. 
In this section we focus on Question~\ref{question: sph-amoeba-approach-sph-trop}(1), i.e. the 
case when we take $Y=G/H$. We analyze the following interesting cases of spherical homogeneous spaces: (1) the ``group case'' of $G \times G$ acting on $G$, for $G=\SL_n(\C)$, (2) the basic affine space $G/H = \SL_n(\C)/U$, (3) the case $G/H = \SL_n(\C)/\SL_{n-1}(\C)$, and (4) the case of the ``space of hyperbolic triangles'' (the precise definition is in Section~\ref{subsec: hyperbolic triangles}). In the cases of the basic affine space $\SL_n(\C)/U$ and $\SL_n(\C)/\SL_{n-1}(\C)$, it turns out that the spherical amoebae coincide with the valuation cone for all $t$, so the limit does indeed coincide with the valuation cone, with a rather trivial limiting process. However, for the ``group case'' and the hyperbolic triangles case, the spherical amoeba is different from the valuation cone but the limit does indeed approach the valuation cone via a non-trivial limiting process (see Figure~\ref{fig:g-times-g-mod-g}). 

As in Section~\ref{sec: limit}, throughout this section we work over $\R$.

\subsection{ Two small examples with $G=\SL_2(\C)$  } 
\label{subsec-SL2-case}

\subsubsection{The case of $X = \SL_2(\C)/T = (\mathbb{P}^1 \times \mathbb{P}^1) \setminus \Delta$} 

Let $G = \SL_2(\C)$ and let $H=T$, the maximal (diagonal) torus of $G$.  Consider the action of $G$ on $\mathbb{P}^1$ by left multiplication, and the corresponding diagonal action of $G$ on $\mathbb{P}^1 \times \mathbb{P}^1$.  Then it is not hard to see that the stabilizer of the point $x_0 = ([1:0], [0:1]) \in \mathbb{P}^1 \times \mathbb{P}^1$ is precisely $T$, and that the orbit of $x_0$ under the $G$-action is $\mathbb{P}^1 \times \mathbb{P}^1 \setminus \Delta$, where $\Delta$ denotes the diagonal copy of $\mathbb{P}^1$ in the direct product. The compact group $K$ is $SU(2)$ in this case. The (diagonal) action of $SU(2)$ on $\mathbb{P}^1 \times \mathbb{P}^1$ has moment map 
$$
\Phi([z],[w]) = \frac{i}{2} \left( \frac{zz^*}{\| z \|^2} + \frac{ww^*}{\| w\|^2} \right) - \left( \frac{i}{4 \| z\|^2} \mathrm{tr}(zz^*) +   \frac{i}{4 \| w\|^2} \mathrm{tr}(ww^*) \right) \cdot I_{2 \times 2} \in \mathfrak{su}(2)^* 
$$
where $([z],[w])$ are homogeneous coordinates on $\mathbb{P}^1 \times \mathbb{P}^1$ (so $z,w$ are considered as elements of $\C^2$) and $I_{2 \times 2}$ denotes the $2 \times 2$ identity matrix. Composing this with the map which quotients by the coadjoint action of $SU(2)$ on $\mathfrak{su}(2)^*$, i.e. the so-called ``sweeping map'' $\mathfrak{su}(2)^* \to \R_{\geq 0} = \mathfrak{su}(2)/SU(2)$, yields that the Kirwan map $\mathbb{P}^1 \times \mathbb{P}^1 \to \R_{\geq 0}$ is given by 
$$
([z],[w]) \longmapsto \frac{1}{2} \sqrt{ \left( \frac{ |z_1|^2}{\| z\|^2} + \frac{|w_1|^2}{\| w\|^2} - 1 \right)^2 + \left( \frac{ |z_1|^2 |z_2|^2}{\| z\|^4} + \frac{2 \mathrm{Re}(z_1 \overline{w}_1 \overline{z}_2 w_2)}{ \| z\|^2 \| w \|^2} + \frac{ |w_1|^2 |w_2|^2}{\| w\|^4} \right)}. 
$$
The Kirwan polytope (i.e. the image of $\mathbb{P}^1 \times \mathbb{P}^1$ under the above map) is the interval $[0,\frac{1}{2}]$ and the image of the diagonal is straightforwardly computed to be $\frac{1}{2}$, so the distinguished $K$-orbit (the diagonal) corresponds to the boundary value. 

There is also another natural parametrization of $K$-orbits which can be described in terms of the angle between two complex lines in $\C^2$. More specifically, given $([z],[w]) \in \mathbb{P}^1 \times \mathbb{P}^1$ where $z,w \in \C^2 \setminus \{0\}$, we may define 
$$
\rho([z],[w]) := 1 - \frac{|\langle z,w \rangle|^2}{\|z\|^2 \|w\|^2},
$$ where $\langle z, w \rangle$ is the standard Hermitian product on $\C^2$. Geometrically, $\rho$ is the quantity $\sin^2(\theta)$ where $\theta$ is the angle between the two complex lines spanned by $z$ and $w$, or equivalently, the spherical distance between two distinct points $p, q \in \mathbb{P}^1 = S^2$ if we represent $p$, $q$ by complex vectors $z, w \in \C^2$ with $\| z \| = \| w \| = 1$.  It turns out that $\rho([z],[w]) = \rho([z'],[w'])$ if and only if the two pairs $([z],[w]), ([z'], [w'])$ are in the same $K=\SU(2)$-orbit, so the function $\rho$ provides a parametrization of $K$-orbits in $G/T$. It follows from the Cauchy-Schwarz inequality that the image of $\rho$ is the interval $(0, 1]$. 
Thus, composing $\rho$ with $-\log$, i.e. $([z],[w]) \longmapsto - \log(\rho([z],[w]))$, we obtain an identification of the $K$-orbit space with the valuation cone (which in this case is $\mathbb{R}_{\geq 0}$). 

Finally, there is also our spherical function $\phi_2$ corresponding to the irreducible representation of $\SL_2(\C)$ of highest weight $2$ in $\C[\SL_2(\C)/T]$. This can be computed to be 
$$
\phi_2([z],[w]) = \frac{1}{\lvert z_1 w_2 - z_2 w_1 \rvert^2} \left( \lvert z_1 w_1 \rvert^2 + \frac{1}{2} \lvert z_1 w_2 + w_1 z_2 \rvert^2 + \lvert z_2 w_2 \rvert^2 \right). 
$$
One can show that $\phi_2([z], [w]) \geq 1/2$. On the other hand, $\phi_2((1: 1), (-1: 1)) = 1/2$. This shows that the image of $\phi_2$ is $[1/2, \infty)$. Hence we have a parametrization of the $K$-orbits by the points in this interval. One can obtain a parametrization of the $K$-orbit space by $\R_{\geq 0}$ (the valuation cone) by taking the limit, as $t \to 0$, of the image of $-\log_t(\phi_2)$.

The above computations show that, in this case, we have three parametrizations of the space of $K$-orbits $SU(2) \backslash SL_2(\C)/T$ by (half open) intervals in $\R$, namely: the spherical function $\phi_2$ above, the Kirwan map computed above, and the map $\rho$ as above. 

\subsubsection{The case of $X = \SL_2(\C)/N(T)$} 
{As in the last section, we take $G =\SL_2(\C)$ but this time we take $H=N(T)$. In this situation we have $N(T)/T \cong \Z/2\Z$ so there is a natural map $\SL_2(\C)/T \to \SL_2(\C)/N(T)$. The homogeneous space $\SL_2(\C)/N(T)$ can be identified with $\mathbb{P}^2 \setminus Q$ where $Q$ is a smooth conic, as follows: consider the map $\Sym^1(\C^2) \times \Sym^1(\C^2) \to \Sym^2(\C^2)$ given by multiplication. Note that $\Sym^1(\C^2) \cong \C^2$ and $\Sym^2(\C^2) \cong \C^3$. This product map induces a morphism $(\mathbb{P}^1 \times \mathbb{P}^1) \setminus \Delta \to \mathbb{P}^2 \setminus Q$, where $Q$ is the smooth conic defined by the vanishing of the discriminant on $\Sym^2(\C^2) \cong \C^3$. One sees that the natural projection $\SL_2(\C)/T \to \SL_2(\C) / N(T)$ is then identified with $(\mathbb{P}^1 \times \mathbb{P}^1) \setminus \Delta \to \mathbb{P}^2 \setminus Q$. The non-identity element in the quotient $N(T)/T \cong \Z/2\Z$ corresponds to the involution on $\mathbb{P}^1 \times \mathbb{P}^1$ exchanging the two factors. This involution leaves the Kirwan map, $\rho$ and $\phi_2$ of the previous section invariant. This implies that all of these functions descend to functions on $\SL_2(\C)/N(T)$ which also parametrizes the $K$-orbits on $\SL_2(\C)/N(T)$.  
}

\subsection{The group case}
\label{subsec-ex-gp-case}
As a first example, we consider $G$ equipped with the action of $G \times G$ by left and right multiplication, as described in Example \ref{ex-sph-var}(5). In particular, here we identify $G$ with the homogeneous space $G \times G/G_{\diag}$. We choose the Borel subgroup $B \times B^- \subseteq G \times G$ where $B \subseteq G$ is a Borel subgroup of $G$ and $B^-$ is its opposite. By the Peter-Weyl Theorem, $\C[G]$ decomposes as a $G \times G$-module as follows: 
\[
  \C[G] \cong \bigoplus_{\lambda \in \Lambda^+}  V_\lambda \otimes V_\lambda^* 
\]
where $\Lambda^+$ is the set of dominant weights of $G$. It follows from Definition~\ref{def: Lambda plus} that $\Lambda_G^+ = \Lambda_{G \times G/G_{\diag}}^+ \cong \Lambda^+$ and that 
the lattice $\Lambda_{G}$ of $G \cong G \times G/G_{\diag}$ as a $G \times G$-spherical variety is the sublattice $\{(\lambda, -\lambda) \mid \lambda \in \Lambda \} \subset \Lambda \times \Lambda$. Using Theorem~\ref{th-Sumihiro}, \cite[Theorem 24.2]{Timashev}, \cite[Lemma 24.3]{Timashev} and the Cartan decomposition, it can then be shown that the valuation cone $\V_{G}$ can be identified with the antidominant Weyl chamber in $\check{\Lambda}$, i.e., $\V_G = \{ \xi \in \check{\Lambda} \colon \langle \xi, \alpha \rangle \le 0 \; \text{for all positive roots} \; \alpha \}$.  

In the case of $G=\SL_n(\C)$, to which we now restrict, we can also give
an explicit description of the spherical functions. 
We choose the basis $\{(\omega_i, -\omega_i)\}_{1 \leq i \leq n-1}$ for $\Lambda_{\SL_n(\C)}$ where $\omega_i$ denotes the usual $i$-th fundamental weight of $\SL_n(\C)$. The corresponding irreducible $\SL_n(\C) \times \SL_n(\C)$-representation is $\Lambda^i \C^n \otimes (\Lambda^i \C^n)^*$. To construct the spherical function $\phi_{(\omega_i, - \omega_i)}$ corresponding to $(\omega_i, -\omega_i)$, we need to find an isomorphic copy of $\Lambda^i \C^n \otimes (\Lambda^i \C^n)^*$ in $\C[\SL_n(\C)]$, considered as an $\SL_n(\C) \times \SL_n(\C)$-representation. Let $J = \{j_1 < j_2 < \cdots < j_i\}, K = \{k_1 < k_2 < \cdots < k_i\}$ be subsets of $\{1,2,\ldots,n\}$ of cardinality $i$. Let $p_{J,K}$ denote the $(i \times i)$ minor of an $n \times n$ matrix corresponding to the subsets $J$ and $K$, i.e., the determinant of the submatrix with rows in $J$ and columns in $K$. We view $p_{J,K}$ as an element in $\C[\SL_n(\C)]$. By explicitly analyzing the action of the Chevalley generators $E_\alpha, F_\alpha$ (for positive roots $\alpha$) as well as by computing the $T$-weights of the $p_{J,K}$, it is not difficult to see that the span of the $p_{J,K}$ is a subrepresentation of $\C[\SL_n(\C)]$ isomorphic to $\Lambda^i \C^n \otimes (\Lambda^i \C^n)^*$. Moreover, it is straightforward to see that each $p_{J,K}$ is a $T$-weight vector, and that there exists a $K$-invariant inner product with respect to which the $p_{J,K}$ form an orthonormal basis. Thus we have 
\[
\phi_{(\omega_i, -\omega_i)}(x) = \frac{1}{\binom{n}{i}} \sum_{J, K} \lvert p_{J,K} \rvert^2
\]
where the normalization factor ensures that $\phi_{(\omega_i, - \omega_i)}(x) = 1$. 

Recall the singular value decomposition theorem, which states that any  $A \in \SL_n(\C)$ can be expressed as a product
\[
  A = U_1 D U_2
\]
where $U_1, U_2 \in \SU_n$ and $D$ is a diagonal matrix with positive real entries whose product is equal to $1$. Let $\Gamma = \{(\omega_i, -\omega_i) \, \mid \, 1 \leq i \leq n-1 \}$ and consider the spherical logarithm map $\sLog_{\Gamma, t}$ on $\SL_n(\C)$ corresponding to $\Gamma$. Since the spherical function $\phi_i$ is $(\SU_n \times \SU_n)$-invariant by construction, it follows that
\[
  \sLog_{\Gamma, t}(\SL_n(\C)) = \left\{ \left( \sum_{|I|=i} \prod_{k \in I} d_k \right) \in \R^{n-1} \, \mid \, 1 \leq i \leq n-1 ;  d_1, \ldots, d_n \in \R_+, \prod_{k=1}^n d_k = 1 \right\} \text{.}
\]
(Note that the components of the function $\log_t \circ \Phi$ are exactly the symmetric functions on the eigenvalues of $A$, i.e., the coefficients of the characteristic polynomial of $A$.) 

For the special case $n = 3$, an elementary calculus (maximization) computation shows that the resulting region is bounded by
\begin{multline*}
  \left\{ \left( \log_t\left(\frac{2x^3+1}{3x^2}\right), \log_t\left(\frac{2+x^3}{3x}\right) \right) \colon x \in [1, \infty)\right\} \\
  \text{and} \; \left\{ \left(\log_t\left(\frac{2+x^3}{3x}\right), \log_t\left(\frac{2x^3+1}{3x^2}\right) \right) \colon x \in [1, \infty)\right\}
\end{multline*}
from which it can be seen that as $t$ approaches $0$, the spherical amoebae approach $\overline{\V}_G$. 
See Figure \ref{fig:g-times-g-mod-g}. {Also compare with \cite[Example 7.7]{Kaveh-Manon}.}

\begin{figure}[!ht]
  \centering
  \includegraphics{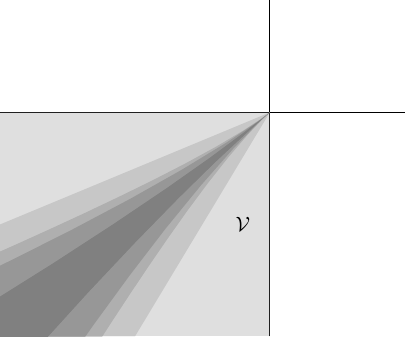}
  \caption{The images of the spherical logarithm $\mathrm{im} (\log_t(\Phi))$ approach the valuation cone as $t \to 0$ for the case $(\SL_3(\C) \times \SL_3(\C))/\SL_3(\C)$.}
  \label{fig:g-times-g-mod-g}
\end{figure}

\subsection{The basic affine space \texorpdfstring{$\SL_n(\C)/U$}{SL(n,C)/U}}
\label{subsec-ex-G/U}
Consider the spherical homogeneous space $G / U$ where $U$ is a maximal unipotent subgroup (see Example \ref{ex-sph-var}(4)). One knows that $G / U$ is a quasi-affine variety. The ring of regular functions $\C[G / U]$ can be identified with the ring $\C[G]^U$ of (right) $U$-invariants. Thus, as a $G$-module it decomposes as:
\[
  \C[G] = \bigoplus_{\lambda \in \Lambda^+} V_\lambda^* \text{.}
\]
Moreover, the multiplication in the algebra $\C[G/U]$ corresponds to Cartan multiplication between the $V_\lambda^*$. It follows that the tail cone consists of the origin only and hence the valuation cone $\V_{G/U}$  is the whole vectors space $\mathcal{Q}_{G/U} = \Lambda_\Q$. 

Let us give an explicit description of the spherical functions on $G/U$ in the case of $G = \SL_n(\C)$. One can take $U$ to be the subgroup of upper triangular matrices with $1$'s on the diagonal. Set $V = \C^n$ and consider the natural action of $\SL_n(\C)$ on the exterior algebra $\Lambda^* V$. If $e_1, \ldots, e_n$ are the standard basis vectors of $V$, then the stabilizer of $x_0 \coloneqq (e_1, e_1\wedge e_2, \ldots, e_1 \wedge \ldots \wedge e_n)$ is $U$. As $\SL_n(\C) \to \Lambda^* V$ is a locally closed embedding with quasi-affine image, we obtain a surjective map $\C[\Lambda^* V] \to \C[\SL_n(\C)/U]$ and it straightforwardly follows that the algebra $\C[\SL_n(\C)]^U$ is generated by the so-called flag minors: Let $x \in \SL_n(\C)$. For subsets $I, J \subset \{1, \ldots, n\}$ with $|I| = |J| = i$, let $p_{I,J}(x) $ denote the minor of $x$ which is the determinant of the $i \times i$ submatrix of $x$ with rows corresponding to $I$ and columns corresponding to $J$. A \emph{flag minor} is a minor of the form $p_{I, \{1, \ldots, i\}}$, that is when $J = \{1, \ldots, i\}$ corresponds to the first $i$ columns of $x$.

Let $\Gamma = \{\omega_1, \ldots, \omega_{n-1}\}$. There are $n-1$ spherical functions corresponding to the fundamental weights $\omega_1, \ldots, \omega_{n-1}$. The representation $V_{\omega_i}$ corresponding to the $i$-th fundamental weight is $\bigwedge^i V$ and the $i \times i$ flag minors are a vector space basis consisting of weight vectors.
{Consider the standard Hermitian product on $V$. This induces Hermitian products on $\bigwedge^i V$ for al $i$. One verifies that, for each $i$, the vector $\{e_{j_1} \wedge \cdots e_{j_i} \mid 1 \leq j_1 < \cdots < j_i \leq i \}$ is an orthonormal basis for $\bigwedge^i V$. It follows that the flag minors are an orthonormal basis for $\bigwedge^i V \subset \C[\SL_n(\C)]^U$.}
Thus the $i$-th spherical function $\phi_i$ corresponding to $\omega_i$ is given by:
\[
  \phi_i(x) = \sum_{I} |p_{I, \{1, \ldots, i\}}(x)|^2 \text{,}
\]
where the sum is over all subsets $I \subset \{1, \ldots, n\}$ with $|I| = i$. Note that $\phi_i(eU) = 1$.  Consider the spherical logarithm map $\sLog_{\Gamma, t}$ corresponding to our choice of $\Gamma$. We have
\[
  x=\begin{psmallmatrix}
    t_1 & & & &\\
    &t_1^{-1} t_2^{\mbox{}} & & &\\
    &&\ddots& &\\
    &&&t_{n-2}^{-1} t_{n-1}^{\mbox{}} &\\
    &&&& t_{n-1}^{-1}
  \end{psmallmatrix} \in \SL_n(\C)
\]
with $\sLog_{\Gamma, t}(x) = (\log_t|t_1|^2, \ldots, \log_t|t_{n-1}|^2)$, and thus the image of $\sLog_{\Gamma, t}$ is the whole space $\R^{n-1}$ and agrees (for all $t$) with the valuation cone. 

\subsection{Example of \texorpdfstring{$\SL_n(\C) / \SL_{n-1}(\C)$}{SLn/SL(n-1)}}
\label{subsec:sln-sln-1}

Throughout this section, we let $G = \SL_n(\C)$ and $H = \SL_{n-1}(\C)$ for  $n \geq 3$. {The variety $G/H$ is a very well-known example of a spherical homogeneous space. Nevertheless, for the convenience of the reader we try to give details of the proofs in this section.} 

The maximal compact subgroup of $\SL_n(\C)$ is $K \coloneqq \SU_n$ which we realize explicitly as 
\[
  \SU_n = \set{ A \in \C^{n \times n} \with \overline{A}^t A = I_n} 
\]
where $\overline{A}^t$ denotes the conjugate transpose. 
Let $\SL_n(\C)$ act on $\C^n \times \C^n$ as follows: 
\begin{equation}\label{eq: SLn action}
  \SL_n(\C) \times \rleft( \C^n \times \C^n \rright) \to \C^n \times \C^n;  (A, \vv, \ww) \mapsto ( A\vv, \prescript{t}{}{A}^{-1}\ww) \text{.}
\end{equation}
We denote the standard basis of $\C^n$ by $\ee_1, \ldots, \ee_n$ and the coordinates on the two $\C^n$ factors by $x_1, \ldots, x_n$ and $y_1, \ldots, y_n$ respectively. With respect to the action~\eqref{eq: SLn action} it is straightforward to compute that the stabilizer of $(e_1, e_1) \in \C^n \times \C^n$ is 
\[
  \stab_{\SL_n(\C)} (\ee_1,\ee_1) = \set{%
  \tikz[baseline=(M.west),every left delimiter/.style={xshift=.4em},every right delimiter/.style={xshift=-.4em}]{%
    \node[matrix of math nodes,matrix anchor=west,left delimiter=(,right delimiter=),ampersand replacement=\&,inner sep=0pt,nodes={inner sep=3.5pt},row sep=0pt] (M) {%
      1 \& 0 \& \ldots \& 0 \\
      0 \& \mbox{}  \& \mbox{} \& \mbox{} \\
      \smash{\vdots}\vphantom{\int\limits^x} \& \mbox{} \& \mbox{} \& \mbox{} \\
      0 \& \mbox{} \& \mbox{} \& \mbox{} \\
    };
    \node[draw,fit=(M-2-2)(M-4-4),inner sep=-1pt,label=center:$\SL_{n-1}$] {}; } } \cong \SL_{n-1}(\C) \text{.}
\]

\begin{Lem}
\[
  \SL_n(\C) \cdot (\ee_1, \ee_1) = \set{ (\xx, \yy) \in \C^n \times \C^n \with \sum_{i=1}^n x_i y_i = \mathbf{y}^t \mathbf{x} = 1} \subseteq \C^n \times \C^n 
\]
where we use the notation $\mathbf{y}:=(y_1,\ldots, y_n)$ and $\mathbf{x} := (x_1,\ldots, x_n)$.
\end{Lem}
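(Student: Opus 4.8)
The plan is to prove the two inclusions separately; the reverse inclusion is exactly the assertion that $\SL_n(\C)$ acts transitively on the quadric $\{(\xx,\yy) : \mathbf{y}^t\mathbf{x} = 1\}$.

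For the inclusion $\subseteq$, the first thing I would note is that the bilinear pairing $(\xx,\yy) \mapsto \mathbf{y}^t\mathbf{x}$ on $\C^n\times\C^n$ is invariant under the action~\eqref{eq: SLn action}: for $A \in \SL_n(\C)$ one has $(\prescript{t}{}{A}^{-1}\yy)^t(A\xx) = \yy^t \bigl(\prescript{t}{}{A}^{-1}\bigr)^t A\,\xx = \yy^t A^{-1}A\,\xx = \yy^t\xx$, using $\bigl(\prescript{t}{}{A}^{-1}\bigr)^t = A^{-1}$. Since $\mathbf{e}_1^t\mathbf{e}_1 = 1$, every point of the orbit $\SL_n(\C)\cdot(\ee_1,\ee_1)$ therefore lies in the indicated set.

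For the inclusion $\supseteq$, take $(\xx,\yy)$ with $\mathbf{y}^t\mathbf{x} = 1$; I would construct $A \in \SL_n(\C)$ with $A\ee_1 = \xx$ and $\prescript{t}{}{A}^{-1}\ee_1 = \yy$. Writing $A = [\,\xx \mid \mathbf{a}_2 \mid \cdots \mid \mathbf{a}_n\,]$ columnwise, the condition $A\ee_1 = \xx$ is automatic, while $\prescript{t}{}{A}^{-1}\ee_1 = \yy$ rewrites as $\prescript{t}{}{A}\,\yy = \ee_1$, i.e. $\yy^t\xx = 1$ (which holds) together with $\yy^t\mathbf{a}_j = 0$ for $j \geq 2$. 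Since $\mathbf{y}^t\mathbf{x} = 1$ forces $\yy \neq 0$, the hyperplane $\yy^{\perp} := \ker(\vv \mapsto \yy^t\vv)$ has dimension $n-1$, and $\xx \notin \yy^{\perp}$ because $\yy^t\xx = 1 \neq 0$; hence $\C^n = \C\xx \oplus \yy^{\perp}$. Picking any basis $\mathbf{b}_2,\ldots,\mathbf{b}_n$ of $\yy^{\perp}$, the matrix $[\,\xx \mid \mathbf{b}_2 \mid \cdots \mid \mathbf{b}_n\,]$ is invertible with some determinant $d \neq 0$; replacing $\mathbf{b}_2$ by $d^{-1}\mathbf{b}_2$ produces columns $\mathbf{a}_2,\ldots,\mathbf{a}_n$ for which $\det A = 1$, giving the desired $A \in \SL_n(\C)$.

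I do not expect a genuine obstacle here: the argument is elementary linear algebra. The only points deserving (minor) care are flagging that $\yy \neq 0$, which underlies the dimension count and the direct-sum decomposition, and the determinant-rescaling step needed to land in $\SL_n(\C)$ rather than merely $\GL_n(\C)$. As an alternative one could deduce transitivity abstractly from the stabilizer computation already carried out together with a dimension count, but the explicit construction above is cleaner and self-contained.
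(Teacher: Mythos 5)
Your proof is correct, and for the inclusion $\supseteq$ it takes a genuinely different — and cleaner — route than the paper. Your argument for $\subseteq$ is the same as the paper's (verify that the bilinear pairing $\yy^t\xx$ is preserved by the action), just phrased slightly more structurally as invariance of a pairing.

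For $\supseteq$, the paper first chooses some $g\in\SL_n(\C)$ whose top row equals $\yy$, then defines $g'$ by replacing the first column of $g^{-1}$ by $\xx$, and verifies both $\det(g')=1$ and $\prescript{t}{}{(g')}^{-1}\ee_1=\yy$ via cofactor expansions and Cramer's rule. Your construction sidesteps that bookkeeping entirely: by building $A$ columnwise with first column $\xx$, you translate the second condition $\prescript{t}{}{A}^{-1}\ee_1=\yy$ into the transparent linear-algebra condition that the remaining columns lie in the hyperplane $\yy^{\perp}$, and the direct-sum decomposition $\C^n = \C\xx\oplus\yy^{\perp}$ (valid exactly because $\yy^t\xx=1\neq 0$) guarantees invertibility; a one-line rescaling of a single column fixes the determinant. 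This is more self-contained and avoids introducing the auxiliary matrix $g$ and the two Cramer's-rule computations. Both are elementary, but yours makes the underlying geometry of the two constraints visible rather than hiding it inside determinant identities.
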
 

\begin{proof} 
 The inclusion ``$\subseteq$'' is straightforward, since for any $A \in \SL_n(\C)$, it immediately follows from the definition of the action that $A \cdot (e_1, e_1) := (\mathbf{x} = Ae_1, \mathbf{y} = \prescript{t}{}{A}^{-1} e_1)$ would have the property $\mathbf{y}^t \mathbf{x} = e_1^t (\prescript{t}{}{A}^{-1})^t A e_1 = e_1^t A^{-1} A e_1 = e_1^t e_1 = 1$. 
 
 To see the inclusion ``$\supseteq$'', suppose that $(\mathbf{x}, \mathbf{y}) \in \C^n \times \C^n$ satisfies $\mathbf{y}^t \mathbf{x} = 1$. We wish to find an element in $\SL_n(\C)$ that takes $(e_1,e_1)$ to $(\mathbf{x}, \mathbf{y})$. First notice that since $\mathbf{y} \neq 0$, there must exist some matrix $g = (a_{ij})  \in \SL_n(\C)$ with the property that the top row of $g$ is $(y_1,y_2,\ldots,y_n)$. Now define a matrix $g'$ by replacing the first column of $g^{-1}$ with $(x_1,\ldots,x_n)^t$ (a column vector). We claim that $g'$ is an element in $\SL_n(\C)$ and that $g' \cdot (e_1,e_1) = (\mathbf{x}, \mathbf{y})$. By construction we have $ge_1=\mathbf{x}$, so it remains to see that $g' \in \SL_n(\C)$ and that $\prescript{t}{}{g}^{-1} e_1 = \mathbf{y}$. 
 
 We claim $g' \in \SL_n(\C)$. To see that $\det(g')=1$ we make an explicit computation. Given an $n \times n$ matrix $A$ we denote by $A_{i,j}$ the $(n-1) \times (n-1)$ matrix obtained by deleting its $i$-th row and $j$-th column. Below we compute $\deg(g')$ by expansion along its left-most column, which by construction is $\mathbf{x}$. 
 \begin{equation*}
 \begin{split} 
 \det(g') & = \sum_{i=1}^n (-1)^{i+1} x_i \cdot \det(g'_{i,1}) \\
 & = \sum_{i=1}^n (-1)^{i+1} x_i \cdot \det((g^{-1})_{i,1}) \textup{ since $g'$ is the same as $g^{-1}$ except for the leftmost col} \\
 & = \sum_{i=1}^n x_i a_{1,i} \textup{ by Cramer's rule for matrix inverses and since $(g^{-1})^{-1} = g$} \\
 & = \sum_{i=1}^n x_i y_i \textup{ since the top row of $g$ is $\mathbf{y}$ by construction} \\
 & = 1 \textup{ by assumption on $\mathbf{x}$ and $\mathbf{y}$.} \\
 \end{split} 
 \end{equation*}
 We next claim that $\prescript{t}{}{(g')}^{-1} e_1 = \mathbf{y}$, or in other words, that $(g')^{-1}$ has top row 
 equal to $\mathbf{y}$. By a Cramer's rule argument similar to that above, we can see that the $(1,i)$-th matrix entry of $(g')^{-1}$ is the $(i,1)$-th cofactor of $g'$, which is equal to the $(i,1)$-th cofactor of $g^{-1}$, which in turn is equal to the $(1,i)$-th entry of $(g^{-1})^{-1}=g$.  But this last value is exactly $y_i$ by construction of $g$, so we are done. 
 \end{proof} 
 
 It follows from the above lemma that the homogeneous space $\SL_n(\C)/\SL_{n-1}(\C)$ is an affine variety and its coordinate ring may be identified as 
\[
  \C \rleft[ \SL_n(\C) / \SL_{n-1}(\C) \rright] \cong \C \rleft[ x_1, \ldots, x_n, y_1, \ldots, y_n \rright] / \rleft\langle \sum_{i=1}^n x_i y_i = 1 \rright\rangle \text{.}
\]
Let $B$ be the Borel subgroup of upper triangular matrices in $\SL_n(\C)$ and $T \subseteq B$ the maximal torus consisting of all diagonal matrices in $\SL_n(\C)$. Let $U$ be the maximal unipotent subgroup of $B$ consisting of upper triangular matrices with $1$'s on the diagonal. We show next that the homogeneous space $\SL_n(\C)/\SL_{n-1}(\C)$ is in fact spherical, i.e., has a dense open $B$-orbit. We have the following. 

\begin{Lem}\label{lemma: open B orbit for SLn/SLn-1}
The subset 
  \[
    B \cdot (\ee_1 + \ee_n,\ee_1) = \set{(\xx,\yy) \in \C^n \times \C^n \with \yy^t \xx = 1 \textup{ and } x_ny_1 \neq 0}  \text{.}
  \]
is a dense open $B$-orbit in $\SL_n(\C)/\SL_{n-1}(\C)$. In particular, $\SL_n(\C)/\SL_{n-1}(\C)$ is a spherical homogeneous space. 
\end{Lem}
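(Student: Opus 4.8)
The plan is to compute the $B$-orbit of the base point $p := \ee_1 + \ee_n$ in the first factor together with $\ee_1$ in the second, i.e. $p = (\ee_1+\ee_n,\ee_1)$, and to check directly that this orbit equals the set displayed in the lemma. Throughout I would use the action $A\cdot(\vv,\ww) = (A\vv,{}^{t}A^{-1}\ww)$ and write a typical element $b\in B$ as an upper-triangular matrix $(b_{ij})$ with $b_{ii}\neq 0$ and $\prod_i b_{ii} = 1$.

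For the inclusion $B\cdot p \subseteq \set{(\xx,\yy)\with \yy^t\xx = 1,\ x_n y_1 \neq 0}$: since $b$ is upper triangular, $b\ee_1 = b_{11}\ee_1$ and $b\ee_n$ is the last column of $b$, so the last coordinate of $b(\ee_1+\ee_n)$ is $b_{nn}\neq 0$; likewise $b^{-1}$ is upper triangular with $(b^{-1})_{11} = b_{11}^{-1}$, so the first coordinate of ${}^{t}b^{-1}\ee_1$ (the first row of $b^{-1}$, transposed) is $b_{11}^{-1}\neq 0$. Moreover a one-line computation shows the action preserves the equation $\yy^t\xx = 1$ cutting out $\SL_n(\C)/\SL_{n-1}(\C)$, so $b\cdot p$ lies in the claimed set. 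For the reverse inclusion, given $(\xx,\yy)$ with $\yy^t\xx = 1$ and $x_n y_1 \neq 0$, I would construct $b\in B$ with $b(\ee_1+\ee_n) = \xx$ and ${}^{t}b^{-1}\ee_1 = \yy$. The second condition is equivalent to $b^{t}\yy = \ee_1$, i.e. $\sum_{j\le i} b_{ji}y_j = \delta_{i1}$ for all $i$; the case $i=1$ forces $b_{11} = 1/y_1$ (using $y_1\neq 0$). The first condition forces the last column of $b$ to be $(x_1-b_{11},x_2,\dots,x_{n-1},x_n)^{t}$, so in particular $b_{nn} = x_n\neq 0$, and a short computation shows the relation for $i=n$ then holds automatically \emph{precisely because} $\yy^t\xx = 1$. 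Columns $2,\dots,n-1$ are then filled in with total freedom: pick $b_{2i},\dots,b_{ii}$ with $b_{ii}\neq 0$ and solve for $b_{1i}$ from $\sum_{j\le i} b_{ji}y_j = 0$ using $y_1\neq 0$. Since $n\ge 3$, the index set $\{2,\dots,n-1\}$ is nonempty, so the $b_{ii}$ for $2\le i\le n-1$ can be normalized so that $\det b = b_{11}b_{nn}\prod_{i=2}^{n-1}b_{ii} = 1$, producing the required $b$.

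Finally, since $\SL_n(\C)/\SL_{n-1}(\C)$ is irreducible (a homogeneous space of a connected group) and the principal open locus $\{x_n y_1\neq 0\}$ meets it, the orbit $B\cdot p$ is a nonempty Zariski-open, hence dense, subset, which makes $\SL_n(\C)/\SL_{n-1}(\C)$ spherical. The only delicate point is the construction of $b$ in the second inclusion: one must verify that the linear constraint indexed by $i=n$ is automatically consistent — this is exactly where the defining equation $\yy^t\xx = 1$ is used — and that $\det b$ can be normalized to $1$, which is exactly where $n\ge 3$ enters (for $n=2$ the middle diagonal block is empty and the situation genuinely degenerates, consistent with $\SL_2(\C)/\SL_1(\C) = \SL_2(\C)$ behaving differently).
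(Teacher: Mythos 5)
Your proposal is correct and follows essentially the same strategy as the paper: both prove the reverse inclusion by directly constructing an upper-triangular $b$ with the required properties, exploiting the freedom in the middle columns to fix the determinant. The paper simply exhibits one explicit solution of the linear system you set up (with the correction factor $y_1/x_n$ placed at position $(n-1,n-1)$ and the remaining middle diagonal entries set to $1$), whereas you describe the general solution and observe the consistency of the $i=n$ constraint and the role of $n\geq 3$ — the same ideas, presented a bit more abstractly.
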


\begin{proof}
  The inclusion ``$\subseteq$'' is straightforward. To show the reverse inclusion, 
  let $(\xx, \yy) \in \C^n \times \C^n$ with $\sum_{i=1}^n x_i y_i = 1$ and $x_ny_1 \neq 0$. Consider the matrix 
  \[
    A \coloneqq \begin{pmatrix}
      \tfrac{1}{y_1} & \tfrac{-y_2}{y_1} & \ldots & \tfrac{-y_{n-2}}{y_1} & \tfrac{-y_{n-1}}{x_n} & x_1 - \tfrac{1}{y_1} \\
      & 1 & 0 & \ldots & 0 & x_2\\
      & & \ddots & \ddots & \vdots & \vdots\\
      & & & 1 & 0 & x_{n-2}\\
      & & & & \tfrac{y_1}{x_n} & x_{n-1}\\
      & & & & & x_n
    \end{pmatrix} \in B \text{.}
  \]
  We claim that $A$ is in the Borel subgroup $B$ of $\SL_n(\C)$ and that $A \cdot (\ee_1 + \ee_n, \ee_1) = (\xx,\yy)$. By construction $A$ is upper-triangular, and from the diagonal entries it easily follows that $\det(A)=1$, so $A$ is in $B$. It is straightforward to see from the definition of $A$ that $A(\ee_1 + \ee_n) = \xx$. To see that 
  $\prescript{t}{}{A}^{-1} \ee_1 = \yy$, it suffices to see that the top row of $A^{-1}$ is $\yy$. Moreover, since $\sum_i x_i y_i=1$, the last entry $y_{n}$ is determined by $\xx$ and $y_1,\ldots,y_{n-1}$, it in fact suffices to check the first $n-1$ entries. Fix $j$ with $1 \leq j \leq n-1$. We wish to show that the $(1,j)$-th entry of $A^{-1}$ is equal to $y_j$. It is straightforward to check this explicitly using the adjoint form of $A$ as in the proof of the previous lemma and the explicit formula for $A$ given above. 
\end{proof}

We now describe parts of the Luna-Vust data associated to the spherical homogeneous space $G/H = \SL_n(\C)/\SL_{n-1}(\C)$. Recall that the \textbf{colors} $\mathcal{D}$ of $G/H$ is defined to be the set of $B$-invariant prime divisors of $G/H$. From Lemma~\ref{lemma: open B orbit for SLn/SLn-1} we know the open $B$-orbit is defined by the condition $x_n y_1 \neq 0$, so any $B$-invariant prime divisor is contained in its complement $\{x_n=0 \textup{ or } y_1 = 0\} = \{x_n=0\} \cup \{y_1 = 0\}$. 

\begin{Lem}\label{lemma: colors xn and y1} 
 The two elements $x_n, y_1$ are prime elements in $\C[\SL_n/\SL_{n-1}]$.
\end{Lem}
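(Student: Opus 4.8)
The plan is to show that the principal ideals $(x_n)$ and $(y_1)$ of
$R := \C[\SL_n/\SL_{n-1}] \cong \C[x_1,\dots,x_n,y_1,\dots,y_n]/\langle \sum_{i=1}^n x_iy_i - 1\rangle$
are prime, equivalently that the quotients $R/(x_n)$ and $R/(y_1)$ are nonzero integral domains. First I would observe that the swap $x_i \leftrightarrow y_i$ together with any simultaneous permutation of the indices $\{1,\dots,n\}$ induce $\C$-algebra automorphisms of $R$, since they preserve the defining relation $\sum_i x_iy_i - 1$; hence it is enough to treat a single element, say $x_n$, and the case of $y_1$ follows formally.

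For $x_n$, the third isomorphism theorem gives $R/(x_n) \cong \C[x_1,\dots,x_{n-1},y_1,\dots,y_n]/\langle g\rangle$ where $g := \sum_{i=1}^{n-1}x_iy_i - 1$: setting $x_n = 0$ deletes the last term of the relation, after which $y_n$ occurs in no relation at all. Thus $R/(x_n) \cong \C[y_n]\otimes_\C A$ with $A := \C[x_1,\dots,x_{n-1},y_1,\dots,y_{n-1}]/\langle g\rangle$, and since a polynomial ring over a domain is again a domain, it suffices to show that $A$ is a domain, i.e. that $g$ is irreducible (it then generates a prime ideal of the surrounding polynomial ring).

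To see this I would regard $g$ as a polynomial of degree one in the single variable $x_1$ over the UFD $S := \C[x_2,\dots,x_{n-1},y_1,\dots,y_{n-1}]$, namely $g = y_1\,x_1 + \big(\sum_{i=2}^{n-1}x_iy_i - 1\big)$. Its leading coefficient $y_1$ is a prime of $S$ that does not divide the constant term $\sum_{i=2}^{n-1}x_iy_i - 1$ (a nonzero polynomial not involving $y_1$), so $g$ is primitive; being linear it is irreducible over $\operatorname{Frac}(S)$, and Gauss's lemma then yields irreducibility of $g$ in $S[x_1] = \C[x_1,\dots,y_{n-1}]$. Hence $A$, and therefore $R/(x_n) \cong \C[y_n]\otimes_\C A$, is a nonzero integral domain, so $x_n$ is neither zero nor a unit and is prime; by the symmetry noted above the same conclusion holds for $y_1$. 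I anticipate no genuine obstacle here: the one step meriting the explicit verification above is the irreducibility of $g$, which is just the familiar fact that the affine quadric $\{\sum x_iy_i = 1\}$ is an integral scheme.
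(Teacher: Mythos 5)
Your proof is correct and follows essentially the same strategy as the paper: reduce to showing $R/(x_n)$ is a domain, identify it with a hypersurface ring, and prove irreducibility of the defining polynomial $g = \sum_{i=1}^{n-1} x_i y_i - 1$. The only real difference is in that last step: the paper argues that a nontrivial factorization of the degree-two polynomial $g$ would force a product of two linear forms and dismisses this possibility ``by direct computation,'' whereas you view $g$ as a linear polynomial in $x_1$ over the UFD $\C[x_2,\dots,x_{n-1},y_1,\dots,y_{n-1}]$ and apply primitivity plus Gauss's lemma. Your route is a bit more systematic and avoids the unstated computation; you also make explicit the symmetry $x_i \leftrightarrow y_i$ (and index permutations) that reduces the $y_1$ case to the $x_n$ case, which the paper handles with ``the argument for $y_1$ is similar.''
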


\begin{proof}
  To see that $x_n$ is a prime element, it would suffice to show that $\C[\SL_n/\SL_{n-1}]/\rleft \langle x_n \rright \rangle$ is a domain. Note that there is an isomorphism
  \[
    \C[\SL_n/\SL_{n-1}] / \rleft \langle x_n \rright \rangle = \C[x_1, \ldots, x_{n-1},y_1, \ldots, y_n] / \rleft\langle x_1 y_1 + \ldots + x_{n-1} y_{n-1} - 1 \rright\rangle 
  \]
  so it suffices to show that the RHS is a domain. Since the polynomial $x_1 y_1 + \ldots + x_{n-1} y_{n-1} -1$ is of degree $2$, if it factors non-trivially then it must be of the form $x_1 y_1 + \ldots + x_{n-1} y_{n-1} -1 = g \cdot h$ where $\deg(g)=\deg(h)=1$ so both $g$ and $h$ are linear polynomials in $x_1,\ldots,x_{n-1}, y_1,\ldots,y_{n-1}$ (with constant term). It is not hard to see by direct computation that this is impossible, so $x_1 y_1 + \ldots + x_{n-1}y_{n-1}-1$ is irreducible and hence the RHS of the equality above is an integral domain. We conclude $x_n$ is prime in $\C[\SL_n/\SL_{n-1}]$, as desired. The argument for $y_1$ is similar. 
  \end{proof}

From Lemma~\ref{lemma: colors xn and y1} it follows that $D_1 = \{x_n=0\}$ and $D_2 = \{y_1 = 0\}$ are prime divisors. Moreover, it is immediate from~\eqref{eq: SLn action} that both are $B$-invariant. It follows that $D_1, D_2$ are the two colors in $\SL_n(C)/\SL_{n-1}(\C)$, i.e., $\mathcal{D} = \{D_1, D_2\}$. 

\begin{Rem}
  Note that $(\ee_1,\ee_1)$ is not contained in the open $B$-orbit and the colors consist of several $B$-orbits.
  Indeed, we have the following ``interesting'' $B$-orbits:
  \begin{itemize}
  \item $(\ee_1 + \ee_n,\ee_1)$ is a base point of the open $B$-orbit in $\SL_n/\SL_{n-1}$.
  \item $B \cdot (\ee_1 + \ee_{n-1}, \ee_1)$ is the open $B$-orbit of the color $D_1 = \set{x_n=0}$.
  \item $B \cdot(\ee_1 + \ldots + \ee_n, \ee_2)$ is the open $B$-orbit of the color $D_2 = \set{y_1=0}$.
  \end{itemize}
  Note that the colors $D_1,D_2$ are each a union of multiple $B$-orbits.
\end{Rem}

Next, we need to compute the $B$-semi-invariants in the ring $\C[\SL_n(\C)/\SL_{n-1}(\C)]$. We need 
some preparation. 

\begin{Lem}\label{lemma: units in SLn/SLn-1}
The coordinate ring $\C[\SL_n(\C)/\SL_{n-1}(\C)]$ is a UFD, a unique factorization domain. 
Moreover, the units $\C[\SL_n(\C)/\SL_{n-1}(\C)]^*$ in $\C[\SL_n(\C)/\SL_{n-1}(\C)]$ consists of the non-zero constants, i.e. $\C[\SL_n(\C)/\SL_{n-1}(\C)]^* = \C^*$.
\end{Lem}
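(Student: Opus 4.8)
The plan is to treat the two assertions separately, although they are related. For the UFD claim, I would first recall the explicit presentation
\[
\C[\SL_n(\C)/\SL_{n-1}(\C)] \cong \C[x_1,\ldots,x_n,y_1,\ldots,y_n]/\langle \textstyle\sum_{i=1}^n x_iy_i - 1\rangle,
\]
and then argue via a localization-and-Nagata-type criterion. Concretely, observe that inverting the single prime element $x_n$ (prime by Lemma~\ref{lemma: colors xn and y1}) yields a localization in which $y_1$ becomes expressible in terms of the other variables, so that the localized ring is isomorphic to a Laurent-polynomial-over-polynomial ring $\C[x_1,\ldots,x_n^{\pm 1},y_2,\ldots,y_{n-1}]$ — which is visibly a UFD. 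By Nagata's lemma, if $R$ is a Noetherian domain, $x_n\in R$ is a prime element, and the localization $R[x_n^{-1}]$ is a UFD, then $R$ itself is a UFD. I would verify the hypotheses of Nagata's lemma: Noetherianity is clear since $R$ is a finitely generated $\C$-algebra, primality of $x_n$ is Lemma~\ref{lemma: colors xn and y1}, and the localization computation is the substitution just described. This gives the UFD statement.

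For the units, I would use a grading/filtration argument. The ideal generator $\sum x_iy_i - 1$ is inhomogeneous, but it is homogeneous with respect to the $\Z$-grading in which each $x_i$ has degree $1$, each $y_i$ has degree $-1$ — except for the constant term $-1$, which breaks homogeneity. A cleaner route is to introduce the $\N$-filtration on $R$ by total degree in all $2n$ variables; the associated graded ring $\operatorname{gr}(R)$ is $\C[x_1,\ldots,x_n,y_1,\ldots,y_n]/\langle\sum x_iy_i\rangle$, the affine cone over a smooth quadric, which for $2n\geq 4$ is a normal domain whose only units are the nonzero constants (its grading is positive). Since a unit of a filtered ring has image a unit in the associated graded, and the filtration is exhaustive and separated with $\operatorname{gr}_0 R=\C$, any unit $u\in R^*$ must have leading form a nonzero constant, hence (subtracting and inducting on filtration degree, using that $\operatorname{gr}(R)$ has no nonconstant units and no zero divisors) $u$ itself is a constant. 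Alternatively, and perhaps more transparently, one can observe that $\SL_n(\C)/\SL_{n-1}(\C)$ is a quasi-affine (in fact affine) homogeneous space of a semisimple group, and invertible regular functions on such a space are constant because a nonconstant unit would give a nontrivial character of $\SL_n(\C)$, which has none; but since we want an elementary self-contained argument consistent with the style of this section, the graded-ring argument is preferable.

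The main obstacle, I expect, is making the localization computation in the Nagata step genuinely precise: one must check that in $R[x_n^{-1}]$ the relation $\sum_{i=1}^n x_iy_i = 1$ lets one solve for $y_n$ (not $y_1$ — I should solve for $y_n = x_n^{-1}(1 - \sum_{i<n}x_iy_i)$), so that $R[x_n^{-1}] \cong \C[x_1,\ldots,x_{n-1},x_n^{\pm1},y_1,\ldots,y_{n-1}]$ freely, and that this isomorphism is a ring isomorphism rather than merely a bijection on generators. Once that is nailed down, Nagata's lemma applies verbatim. A secondary minor point is confirming $2n-1 \geq 3$, i.e. $n\geq 2$, so that the quadric cone $\{\sum x_iy_i = 0\}$ is irreducible and normal — this holds since we assume $n\geq 3$. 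I would present the localization isomorphism carefully, cite Nagata's lemma (e.g.\ as in Matsumura or Samuel), and then dispatch the units via the associated-graded argument, keeping the exposition brief since each individual step is routine.
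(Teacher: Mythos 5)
Your proof of the UFD assertion is essentially the same as the paper's: both localize at the prime element $x_n$ (primality coming from Lemma~\ref{lemma: colors xn and y1}), observe that the localization is a polynomial ring with one variable inverted (solving $y_n = x_n^{-1}(1 - \sum_{i<n} x_i y_i)$), and invoke Nagata's criterion --- the paper cites Eisenbud Lemma~19.20; your citation of Matsumura/Samuel is the same result. Your self-correction about solving for $y_n$ rather than $y_1$ matches the paper's presentation exactly.

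For the units, you take a genuinely different route. The paper stays inside the same localization it already built: any unit of $R$ is a unit of $R_{x_n} \cong \C[x_1,\ldots,x_{n-1},x_n^{\pm 1},y_1,\ldots,y_{n-1}]$, whose units are exactly $c\,x_n^k$ with $c \in \C^*$, $k \in \Z$; since $x_n$ is prime in $R$ (hence not a unit there), $k=0$ and the unit is a constant. This is three lines and uses nothing beyond what is already in hand. Your associated-graded argument is also correct --- the degree filtration has $\gr(R) \cong \C[x,y]/\langle \sum x_i y_i\rangle$ (a domain, positively graded, with $\gr_0 = \C$), and comparing degrees of $u$ and $u^{-1}$ forces both to live in filtration degree zero --- but it requires verifying that the ideal of initial forms is generated by $\sum x_i y_i$, and it introduces machinery (filtered rings, associated graded, irreducibility of the quadric cone) not otherwise needed in this section. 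Your second alternative, via Rosenlicht's theorem on units of regular functions on homogeneous spaces of semisimple groups, is also valid and is in fact the device the paper uses in the hyperbolic-triangles example (Section~\ref{subsec: hyperbolic triangles}); here, though, the localization argument is more economical since the localization was already computed for the UFD step.
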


\begin{proof}
 Consider the localization of $\C[\SL_n/\SL_{n-1}]$ by the prime element $x_n$ and observe that we have an isomorphism
  \[
    \C[\SL_n/\SL_{n-1}]_{x_n} \cong \C[x_1, \ldots, x_n, y_1, \ldots, y_{n-1},x_n^{-1}]
  \]
  since $y_n = (1-\sum_{i=1}^{n-1}x_iy_i)/x_n$. In particular, the ring on the right hand side is a UFD. 
From \cite[Lemma 19.20]{Eisenbud} we know that if $R$ is an integral domain and $p$ a prime element and the localization $R_p$ is a UFD, then $R$ is a UFD.  Applying this to $R=\C[\SL_{n}(\C)/\SL_{n-1}(\C)]$ and $p=x_n$, we conclude that $\C[\SL_n/\SL_{n-1}]$ is also a UFD. 

Now we wish to prove the claim about the units. Embed $\C[\SL_n(\C)/\SL_{n-1}(\C)]$ into the localization 
$\C[\SL_n(\C)/\SL_{n-1}(\C)]_{x_n} \cong \C[x_1,\ldots,x_n, x_n^{-1}, y_1, \ldots, y_{n-1}]$. In particular, any unit in $\C[\SL_n(\C)/\SL_{n-1}(\C)]$ must also be a unit in the localization. The units of $\C[x_1,\ldots,x_n, x_n^{-1}, y_1, \ldots, y_{n-1}]$ are of the form $c x_n^{k}$ for $c \in \C^*$ and $k \in \Z$. However, $x_n$ is a prime element in $\C[\SL_n(\C)/\SL_{n-1}(\C)]$, so it is not a unit in $\C[\SL_n(\C)/\SL_{n-1}(\C)]$. Thus the only units in $\C[\SL_n(\C)/\SL_{n-1}(\C)]$ are the non-zero constants.  
\end{proof}

Recall that a function $f \in \C[\SL_n(\C)/\SL_{n-1}(\C)]$ is called $B$-semi-invariant if there exists a character $\chi_f \in \mathcal{X}(B)$ such that $b \cdot f = \chi_f(b) f$ for all $b \in B$. The set of $B$-semi-invariants is denoted $\C[\SL_n(\C)/\SL_{n-1}(\C)]^{(B)}$. From the above lemma we can deduce the following.

\begin{Cor}
  \label{cor:sln-sln-1-b-semi}
We have
  \[
    \C \rleft[ \SL_n(\C)/ \SL_{n-1}(\C) \rright]^{(B)} = \set{ x_n^k y_1^\ell \with k,\ell \in \Z_{\ge0}} \text{.}
  \]
\end{Cor}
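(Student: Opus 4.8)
The plan is to establish the two inclusions of the asserted equality separately, leaning on the structure already in place: that $\C[\SL_n(\C)/\SL_{n-1}(\C)]$ is a UFD whose units are exactly the nonzero constants (Lemma~\ref{lemma: units in SLn/SLn-1}), that $x_n$ and $y_1$ are prime (Lemma~\ref{lemma: colors xn and y1}), and that the set of colors is exactly $\mathcal{D} = \{D_1, D_2\}$ with $D_1 = \{x_n = 0\}$ and $D_2 = \{y_1 = 0\}$.

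For ``$\supseteq$'' I would first check that $x_n$ and $y_1$ are themselves $B$-semi-invariant. The action~\eqref{eq: SLn action} induces on functions the formula $(b\cdot f)(\xx,\yy) = f(b^{-1}\xx,\ \prescript{t}{}{b}\,\yy)$, and since $b \in B$ is upper triangular a direct computation gives $b\cdot x_n = (b_{nn})^{-1} x_n$ and $b\cdot y_1 = b_{11}\, y_1$, where $b \mapsto (b_{nn})^{-1}$ and $b \mapsto b_{11}$ are characters of $B$. A product of two $B$-eigenfunctions is again a $B$-eigenfunction (with character the product of the two), so every monomial $x_n^k y_1^\ell$ with $k,\ell \ge 0$ lies in $\C[\SL_n(\C)/\SL_{n-1}(\C)]^{(B)}$.

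For the reverse inclusion, take $0 \neq f \in \C[\SL_n(\C)/\SL_{n-1}(\C)]^{(B)}$ and write its prime factorization $f = c\prod_i \pi_i^{a_i}$ in the UFD, with $c \in \C^*$, the $\pi_i$ pairwise non-associate primes, and $a_i \ge 1$. Since $\SL_n(\C)/\SL_{n-1}(\C)$ is affine, $\Div(f) = \sum_i a_i\, \{\pi_i = 0\}$ is an effective divisor. Because $b\cdot f = \chi_f(b) f$ with $\chi_f(b) \in \C^*$, while on the other hand $\Div(b\cdot f)$ is the $b$-translate of $\Div(f)$, we conclude that $\Div(f)$ is a $B$-stable effective divisor; so $B$ permutes its finitely many irreducible components, and by connectedness of $B$ it must fix each one. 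Hence each $\{\pi_i = 0\}$ is a $B$-invariant prime divisor, i.e. a color, and therefore equals $D_1 = \{x_n = 0\}$ or $D_2 = \{y_1 = 0\}$. Since $\pi_i$, $x_n$, $y_1$ are all prime and the units are constants, each $\pi_i$ is a constant multiple of $x_n$ or of $y_1$, and so $f$ is a nonzero scalar multiple of $x_n^k y_1^\ell$ for some $k, \ell \ge 0$, which is the asserted description.

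All the needed ingredients are in place, so there is no serious obstacle; the one step that genuinely requires care is the middle of the second inclusion — passing from ``$f$ is a $B$-eigenfunction'' to ``every prime divisor occurring in $\Div(f)$ is a color'' — which uses both the UFD property (so that $\Div(f)$ and its support are well defined from the factorization) and the connectedness of $B$ (to upgrade ``$B$ permutes the components'' to ``$B$ fixes each component''), together with the already-established classification $\mathcal{D} = \{D_1, D_2\}$. An alternative, slightly more computational route would be to localize at $x_n$, identify the localization with $\C[x_1,\dots,x_n,x_n^{-1},y_1,\dots,y_{n-1}]$ as in the proof of Lemma~\ref{lemma: units in SLn/SLn-1}, and directly determine which Laurent monomials are $B$-eigenfunctions; but the divisor-theoretic argument is cleaner.
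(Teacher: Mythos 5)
Your proof is correct and takes essentially the same route as the paper: both rest on the UFD structure of $\C[\SL_n(\C)/\SL_{n-1}(\C)]$, the fact that its units are $\C^*$, and the classification of the colors as $\{D_1,D_2\}$, reducing to $\Div(f)=k_1D_1+k_2D_2$. The only small difference is in the middle step — the paper passes directly from ``a $B$-eigenfunction cannot vanish on the dense open $B$-orbit $\{x_n y_1 \neq 0\}$'' to the support of $\Div(f)$ being in $D_1\cup D_2$, whereas you get there via $B$-stability of $\Div(f)$ plus connectedness of $B$ — and you also spell out the easy inclusion ``$\supseteq$'' which the paper leaves implicit.
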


\begin{proof}
  Let $f \in \C[ \SL_n/\SL_{n-1}]^{(B)}$ be non-zero and $B$-semi-invariant. Then the vanishing locus of $f$ must be contained in $\{x_n y_1 = 0\}$ since $f$ cannot vanish on the dense open $B$-orbit $\{x_n y_1 \neq 0\}$ (see Lemma~\ref{lemma: open B orbit for SLn/SLn-1}).  Thus $\Div(f) = k_1 D_1 + k_2 D_2$ for some non-negative integers $k_1$ and $k_2$ and $f/(x_n^{k_1}y_1^{k_2})$ is an invertible regular function on $\SL_n/\SL_{n-1}$. In particular, it is a unit in $\C[\SL_n(\C)/\SL_{n-1}(\C)]$. We saw in Lemma~\ref{lemma: units in SLn/SLn-1} that the units are the non-zero constants, so we conclude that $f = c x_n^{k_1} y_1^{k_2}$ for some $c \in \C^*$, as desired. 
\end{proof}

We can now compute $\Lambda^+_{G/H}$. Let $\omega_1, \ldots, \omega_{n-1}$ be the fundamental weights associated to our choice $(B,T)$, i.e., for $i = 1, \ldots, n-1$ we have 
\[
  \omega_i \begin{pmatrix} b_{11} & b_{12} & \ldots & b_{1n} \\ & b_{22} & \ldots & b_{2n} \\ && \ddots & \vdots \\ &&& b_{nn} \end{pmatrix} = \prod_{j=1}^i b_{jj} = b_{11} \cdots b_{ii} \text{.}
\]
It is straightforward to compute that 
the $B$-weights of $x_n$ and $y_1$ are $\chi_1 \coloneqq \omega_{n-1}$ and $\chi_2 \coloneqq \omega_1$ respectively.  The association $f \mapsto \chi_f$ gives a map $\C[\SL_n(\C)/\SL_{n-1}(\C)]^{(B)} \to \mathcal{X}(B)$ whose image is denoted $\Lambda^+_{G/H} = \Lambda^+_{\SL_n/\SL_{n-1}}$. If $f, g \in \C[\SL_n(\C)/\SL_{n-1}(\C)]^{(B)}$ then $\chi_{f \cdot g} = \chi_f + \chi_g$ so $\Lambda^+_{\SL_n/\SL_{n-1}}$ is a semigroup. In our case $G/H = \SL_n(\C)/\SL_{n-1}(\C)$, the above computation shows that $ \Lambda_{G/H}^+$ is freely generated by $\chi_1$ and $\chi_2$, i.e. $\Lambda_{G/H}^+ \cong \Z_{\ge0} \chi_1 \oplus \Z_{\ge0} \chi_2 \cong \Z_{\ge0}^2$. 
For what follows, we choose $\Gamma = \{\chi_1, \chi_2\}$, with associated $B$-semi-invariant functions $x_n, y_1$. Note that $\Gamma$ is also a basis of $\Lambda_{G/H} \otimes \R$, and with respect to this choice of basis, the map $\rho$ of~\eqref{eq: def rho} becomes 
\[
\rho:   \mathcal{V}_{\SL_n(\C)/\SL_{n-1}(\C)} \hookrightarrow \mathcal{Q},~ \nu \mapsto (\nu(x_n),\nu(y_1)). 
\]
We have the following, where we temporarily work over $\Q$, the most natural setting for this lemma. 

\begin{Lem}
The image of $\mathcal{V}_{\SL_n(\C)/\SL_{n-1}(\C)}$ under $\rho$ is 
  \begin{equation}\label{eq: image of rho}
    \rho(\mathcal{V}_{\SL_n(\C)/\SL_{n-1}(\C)}) = \set{(q_1,q_2) \in \Q^2 \with q_1+q_2 \le 0} \text{.}
  \end{equation}
\end{Lem}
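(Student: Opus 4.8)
The plan is to establish the two inclusions separately, working throughout with the explicit model $\C[\SL_n(\C)/\SL_{n-1}(\C)]\cong\C[x_1,\dots,x_n,y_1,\dots,y_n]/\langle\sum_i x_iy_i-1\rangle$ and with the fact that $x_n$ (resp.\ $y_1$) is a highest-weight vector of the irreducible subrepresentation $\mathrm{span}(x_1,\dots,x_n)\cong(\C^n)^*=V_{\omega_{n-1}}$ (resp.\ $\mathrm{span}(y_1,\dots,y_n)\cong\C^n=V_{\omega_1}$) of $\C[G/H]$, with $B$-weight $\chi_1=\omega_{n-1}$ (resp.\ $\chi_2=\omega_1$); see Corollary~\ref{cor:sln-sln-1-b-semi} and the surrounding discussion. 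Under the basis $\Gamma=\{\chi_1,\chi_2\}$ the map $\rho$ sends a $G$-invariant valuation $\nu$ to $(\nu(x_n),\nu(y_1))$, so it suffices to determine which pairs $(\nu(x_n),\nu(y_1))$ occur.

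For the inclusion ``$\subseteq$'', let $\nu\in\V_{G/H}$. Since $V_{\omega_{n-1}}$ is irreducible it is spanned by the $G$-translates of its highest-weight vector $x_n$; writing each $x_i$ as a $\C$-linear combination of such translates and using $G$-invariance of $\nu$ (so $\nu(g\cdot x_n)=\nu(x_n)$) together with the ultrametric inequality, one gets $\nu(x_i)\ge\nu(x_n)$ for all $i$, and likewise $\nu(y_j)\ge\nu(y_1)$ for all $j$. Evaluating $\nu$ on the defining relation $\sum_i x_iy_i=1$ then gives $0=\nu(1)\ge\min_i\bigl(\nu(x_i)+\nu(y_i)\bigr)\ge\nu(x_n)+\nu(y_1)$, i.e.\ $\rho(\nu)=(q_1,q_2)$ with $q_1+q_2\le0$.

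For the reverse inclusion I would realize every admissible pair by a formal curve. Given $(q_1,q_2)\in\Q^2$ with $q_1+q_2\le0$, let $\gamma(t)$ be the formal (Puiseux) curve in $G/H$ with $x_1=t^{q_1}$, $y_1=t^{-q_1}$, $y_2=t^{q_2}$ and all remaining coordinates equal to $0$; it lies in $G/H$ because $\sum_i x_iy_i=t^{q_1}t^{-q_1}=1$. Let $v_\gamma\in\V_{G/H}$ be the associated $G$-invariant valuation (cf.\ Section~\ref{subsec: spherical trop}). By Theorem~\ref{th-Sumihiro}(2), $v_\gamma(x_n)=\min_{g\in G}\ord_t\bigl((g\cdot x_n)(\gamma(t))\bigr)$; since $\SL_n(\C)$ acts transitively on nonzero covectors, $g\cdot x_n$ runs over all of $V_{\omega_{n-1}}\setminus\{0\}=\mathrm{span}(x_1,\dots,x_n)\setminus\{0\}$, and a nonzero combination $\sum_i c_ix_i$ evaluated along $\gamma$ has $\ord_t$ equal to $\min_i\ord_t(\gamma_{x_i}(t))=q_1$ because only $\gamma_{x_1}$ is nonzero; hence $v_\gamma(x_n)=q_1$. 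The identical computation gives $v_\gamma(y_1)=\min_j\ord_t(\gamma_{y_j}(t))=\min(-q_1,q_2)=q_2$, using $q_2\le-q_1$. Thus $\rho(v_\gamma)=(q_1,q_2)$, which together with the first inclusion proves the claimed equality.

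The one step I expect to need care is the computation of $v_\gamma$: although $\gamma$ is supported on a proper linear subvariety of $G/H$, it is precisely the $G$-averaging in Theorem~\ref{th-Sumihiro} that turns the naive order-of-vanishing into an honest valuation, and one must justify that $\min_{g}\ord_t\bigl((g\cdot x_n)(\gamma(t))\bigr)=\min_i\ord_t(\gamma_{x_i}(t))$ using transitivity of $\SL_n(\C)$ on nonzero (co)vectors and the fact that a generic linear combination realizes the minimal $t$-order of a finite family of Laurent series. An alternative route, handling both inclusions simultaneously, is to compute the tail cone directly: the relation $\sum_i x_iy_i=1$ exhibits the trivial representation inside $\langle V_{\chi_1}V_{\chi_2}\rangle$, so $\chi_1+\chi_2$ is a tail, while any tail is a nonnegative combination of the simple roots of $\SL_n(\C)$ lying in the rank-two lattice $\Lambda_{G/H}$, hence a nonnegative multiple of $\alpha_1+\dots+\alpha_{n-1}=\omega_1+\omega_{n-1}=\chi_1+\chi_2$; this gives $\Tm_{G/H}=\Q_{\ge0}(\chi_1+\chi_2)$, and the formula for $\V_{G/H}$ then follows from Proposition~\ref{cor-tail-cone-val-cone}, or equivalently by recognizing $\omega_1+\omega_{n-1}$ as the unique spherical root and applying Theorem~\ref{th-val-cone-co-simplicial}.
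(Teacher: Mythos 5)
Your ``$\subseteq$'' direction is essentially the paper's argument: both exploit $G$-invariance of $\nu$ to control $\nu(x_i)$ and $\nu(y_j)$, and then evaluate on the relation $\sum_i x_iy_i=1$. The paper uses (signed) permutation matrices to get the equalities $\nu(x_1)=\cdots=\nu(x_n)$ and $\nu(y_1)=\cdots=\nu(y_n)$; you use irreducibility of $V_{\omega_{n-1}}$ and $V_{\omega_1}$ plus the ultrametric inequality to get $\nu(x_i)\geq\nu(x_n)$ and $\nu(y_j)\geq\nu(y_1)$. Both suffice.

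Your ``$\supseteq$'' direction is a genuinely different route. The paper builds an auxiliary affine variety $W\subset\C^n\times\C^n\times\C$ carrying a $G$-invariant prime divisor $D=\{w=0\}$, pulls the geometric valuation $\nu_D$ back along an equivariant projection $W\setminus D\to G/H$, and computes $(\nu_D(x_n),\nu_D(y_1))=(n_1,n_2)$; it handles \emph{integral} $(n_1,n_2)$ first and then cites rationality of $\V_{G/H}$ (Theorem~\ref{th-val-cone-co-simplicial}) to extend to $\Q^2$. You instead produce an explicit Puiseux curve $\gamma$ with $x_1=t^{q_1},y_1=t^{-q_1},y_2=t^{q_2}$ and compute the Sumihiro saturation $v_\gamma$ directly, which handles arbitrary rational points in one stroke. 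Your computation is correct: since $\SL_n$ acts transitively on nonzero (co)vectors, $\{g\cdot x_n:g\in G\}=\mathrm{span}(x_1,\dots,x_n)\setminus\{0\}$, and the minimum of $\ord_t$ over all nonzero linear combinations $\sum c_i\gamma_{x_i}$ is indeed $q_1$ (attained whenever $c_1\neq 0$, a Zariski-dense condition), and analogously $v_\gamma(y_1)=\min(-q_1,q_2)=q_2$ using $q_2\le -q_1$ (note that when $q_1+q_2=0$ one must observe that the minimum is still attained on the open set $c_1+c_2\neq 0$). The trade-off: the paper's divisor construction requires guessing $W$ but gives the valuation immediately; your curve approach is more systematic but relies on the genericity analysis in Theorem~\ref{th-Sumihiro}, which you rightly flag as the delicate step.

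One caution on your alternative tail-cone route: the step ``any tail is a nonnegative combination of simple roots lying in $\Lambda_{G/H}$, hence a nonnegative multiple of $\alpha_1+\cdots+\alpha_{n-1}$'' does not follow as stated. For $n=3$, the simple root $\alpha_1=2\omega_1-\omega_2$ already lies in $\Lambda_{G/H}=\Z\omega_1\oplus\Z\omega_2$ and is a nonnegative combination of simple roots, yet it is not a multiple of $\omega_1+\omega_2$. The correct statement is that tails are nonnegative combinations of \emph{spherical} roots (Remark~\ref{rem-tail-cone-sph-roots}), and one must separately identify the unique spherical root here as $\omega_1+\omega_{n-1}$ (for instance, by using dominance of $\gamma$ in $\lambda+\mu-\gamma$ to rule out things like $\alpha_1$). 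Since this is offered only as an alternative, and your primary curve argument is complete, this gap does not affect the correctness of the proposal overall.
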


\begin{proof}
We first show that the LHS is contained in the RHS.
  Let $\nu \in \mathcal{V}(G/H)$. For any $i$ with $1 \leq i \leq n$, there is a permutation $g \in \SL_n(\C)$ of the coordinates such that $g \cdot x_n = x_i$. A similar statement holds for $y_1$ and $y_i$. Since $\nu$ is $\SL_n(\C)$-invariant, it follows that
 \[
    \nu(x_1) = \ldots = \nu(x_n) \qquad \text{and} \qquad \nu(y_1) = \ldots = \nu(y_n) \text{.}
  \]
  On the other hand, since $\sum_{i=1}^n x_i y_i =1$ in $\C[\SL_n/\SL_{n-1}]$, we obtain from the axioms of valuations that 
  \[
  \begin{split} 
    0 = \nu(1) = \nu(x_1y_1 + \ldots x_ny_n) & \ge \mathrm{min} \{\nu(x_1 y_1), \ldots, \nu(x_n y_n)\} \\
    & = 
    \mathrm{min} \{ \nu(x_1)+\nu(y_1), \ldots, \nu(x_n)+\nu(y_n)\} = \nu(x_1)+\nu(y_1) \\
    & = \nu(x_n) + \nu(y_1)\text{.}\\
    \end{split}
  \]
  Hence $\rho(\nu)$ lies in the RHS of~\eqref{eq: image of rho}, as desired. 
  
  We now claim that the RHS is contained in the LHS.  By Theorem~\ref{th-val-cone-co-simplicial} the image $\rho(\mathcal{V}_{\SL_n/\SL_{n-1}})$ is a (rational) polyhedral cone, in order to show that the RHS is contained in the LHS, it suffices to show that \textit{integral} points in the RHS are contained in the LHS. 
  Let $(n_1, n_2) \in \Z^2$ with $n_1+n_2 \le 0$. Consider the subvariety 
  \[
  W \coloneqq \set{ u_1v_1 + \ldots + u_nv_n - w^{-n_1-n_2} = 0 } \subseteq \C^n \times \C^n \times \C
  \]
 and consider the $\SL_n(\C)$-action on $\C^n \times \C^n \times \C$ given as the product of the given $\SL_n(\C)$-action on $\C^n \times \C^n$ and the trivial action on the last $\C$ factor.  This induces an $\SL_n(\C)$-action on $W$: indeed, if $\uu^t I_n \vv = w^{-n_1-n_2}$, then for $A \in \SL_n$, we have $(A\uu)^t I_n (A^t)^{-1} \vv = w^{-n_1-n_2}$. Note that $D := \set{ w=0}$ is $\SL_n(\C)$-invariant, so $W \setminus \set{w=0}$ is also invariant. 
  There is an $\SL_n$-equivariant projection
\[
  W \setminus \set{w=0} \to \SL_n/\SL_{n-1}; (\uu,\vv,w) \mapsto (w^{n_1}\uu,w^{n_2}\vv) 
\]
which induces an inclusion of the corresponding function fields $\C(SL_n/\SL_{n-1}) \hookrightarrow \C(W)$. 
We claim that $D$ is a prime divisor in $W$. Indeed, $z \in \C[W]$ is a prime element, since 
\[
  \C[u_1, \ldots, u_n, v_1, \ldots, v_n, w]/\rleft\langle \sum_{i=1}^n u_iv_i - w^{-n_1-n_2}, w \rright\rangle \cong \C[u_1, \ldots, u_n, v_1, \ldots, v_n] / \rleft\langle \sum_{i=1}^n u_iv_i \rright \rangle 
\]
and the ring on the RHS can be seen to be an integral domain from the fact that $\sum_{i=1}^n u_i v_i$ is irreducible in $\C[u_1,\ldots,u_n, v_1,\ldots,v_n]$ by an argument similar to the proof of Lemma~\ref{lemma: colors xn and y1}. 
Thus we may define $\nu_D$ to be the geometric valuation on $\C(W)$ given by the order of vanishing of a rational function along the (prime) divisor $D$. Since $D$ is $\SL_n$-invariant, this is an $\SL_n$-invariant valuation, and the restriction $\nu_D|_{\C(\SL_n/\SL_{n-1})}$ to (the image of) $\C(\SL_n(\C)/\SL_{n-1}(\C))$ yields an $\SL_n$-invariant valuation in $\mathcal{V}_{\SL_n/\SL_{n-1}}$. We compute
\[
  \nu_D(x_n) = \nu_D(u_nw^{n_1}) = n_1 \qquad \text{and} \qquad \nu_D(y_1) = \nu_D(v_1w^{n_2}) = n_2 \text{.}  
\]
Thus we have shown that $(n_1, n_2)$ lies in the image under $\rho$ of $\mathcal{V}_{\SL_n/\SL_{n-1}}$, as desired.
\end{proof}

As discussed above, in order to compare the valuation cone with the image of the spherical logarithm we should tensor with $\R$ and consider $\overline{\V}_{\SL_n/\SL_{n-1}}$. It is clear that the closure is $\{(q_1, q_2) \in \R^2: q_1 + q_2 \leq 0\}$ and this is what we consider as the valuation cone in Proposition~\ref{prop: limit in SLn SLn-1 case} below. 

The above discussion can be summarized in the following picture.  The gray half-space defined by $q_1+q_2 \leq 0$ is the valuation cone $\overline{\mathcal{V}}_{\SL_n/\SL_{n-1}}$. We have also indicated the images under $\rho$ of the (geometric valuations corresponding to the) two colors $D_1$ and $D_2$; note that these do not lie in the valuation cone since they are not $G$-invariant (only $B$-invariant). 

\begin{center}
  \begin{tikzpicture}[scale=0.5]
  
    \clip (-2.25,-2.25) -- (2.25,-2.25) -- (2.25,2.25) -- (-2.25,2.25) -- cycle;
      
    \fill[fill=gray!25] (3,-3) -- (-3,3) -- (-3,-3) -- cycle;
    \node at (-1,-1) {$\mathcal{V}$};
  
    \draw (-3,0) -- (3,0);
    \draw (0,-3) -- (0,3);
    
    \draw[-latex,very thick] (0,0) -- (1,0) node[below] {\tiny $\rho(v_{D_1})$};
    \draw[-latex,very thick] (0,0) -- (0,1) node[left] {\tiny $\rho(v_{D_2})$};
  \end{tikzpicture}
\end{center}

Next we compute the spherical functions. 
Consider the decomposition $\C[\SL_n(\C)/\SL_{n-1}(\C)] \cong \bigoplus_{\lambda \in \Lambda_{G/H}^+} V_\lambda$. Let $V = \C^n$ the natural representation of $\SL_n(\C)$ and $V^*$ its dual representation. Then $V$ has highest weight $\omega_1$ while the dual representation $V^*$ has highest weight $\omega_{n-1}$.  The standard scalar product turns the standard bases into unitary bases on $V$ and $V^*$ respectively. Note that $V$ and $V^*$ are naturally embedded in $\C[\SL_n(\C) / \SL_{n-1}(\C)]$ by the identifications $V \cong \langle y_1, \ldots, y_n \rangle$ and $V^* = \langle x_1, \ldots, x_n\rangle$. The variables $x_1, \ldots, x_n$ and $y_1, \ldots, y_n$ form weight bases (with respect to $T$) of $V^*$ and $V$ respectively. Hence, we can define, following~\eqref{eq: def spherical} in Section~\ref{sec: spherical functions}, the following spherical functions: 
\begin{equation}\label{eq: sphericals for SLn/SLn-1} 
  \phi_{\omega_{n-1}} = |x_1|^2 + \ldots + |x_n|^2  \textup{ and }  \phi_{\omega_1} = |y_1|^2 + \ldots + |y_n|^2 
  \end{equation}
both of which lie in $\rleft( \C \rleft[ \SL_n(\C)/\SL_{n-1}(\C) \rright] \otimes \overline{\C \rleft[ \SL_n(\C)/\SL_{n-1}(\C) \rright]} \rright)^{\SU_n}$.

\begin{Rem}
Note that $\phi_{\omega_{n-1}}$ and $\phi_{\omega_1}$ are both $\SU_n$-invariant, 
because $x_1,\ldots,x_n$ and $y_1,\ldots,y_n$ are orthonormal and the action of $\SU_n$ leaves lengths invariant. 
\end{Rem}

Finally, we analyze the spherical amoebas, i.e. the images $\sLog_{\Gamma, t}(G/H)$ of $G/H$ under the spherical logarithm maps, and explicitly compute their limit as $t \to 0$. In our case of $G/H = \SL_n(\C)/\SL_{n-1}(\C)$ we can prove that the limit is precisely equal to the valuation cone, see Proposition~\ref{prop: limit in SLn SLn-1 case}. Recall that our spherical logarithm map is given by 
\[
  \sLog_{\Gamma, t} \colon \SL_n(\C)/\SL_{n-1}(\C) \to \mathcal{Q}_{\SL_n/\SL_{n-1}}\cong \R^2; (x,y) \mapsto 
  (\log_t \phi_{\omega_{n-1}}, \log_t \phi_{\omega_1}).
  \]
From~\eqref{eq: sphericals for SLn/SLn-1} we can also express this more explicitly as 
\[
\sLog_{\Gamma, t}(x,y) = ( \log_t(|x_1|^2 + \ldots + |x_n|^2), \log_t(|y_1|^2 + \ldots + |y_n|^2)) \text{.}
\]
In this special case, an explicit computation yields the following.

\begin{Prop}\label{prop: limit in SLn SLn-1 case} 
Let $G=\SL_n(\C)$ and $H=\SL_{n-1}(\C)$. Then 
$\sLog_{\Gamma, t}G/H) = \mathcal{V}_{G/H}$ for all $t<1$. 
In particular,  as $t$ approaches $0$, the sets $\sLog_{\Gamma, t}(G/H)$ converge to the valuation cone $ \overline{\mathcal{V}}_{G/H}$ in the sense of Kuratowski. 
\end{Prop}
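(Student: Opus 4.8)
The plan is to show directly that the image of $\sLog_{\Gamma,t}$ equals the half-space $\{(q_1,q_2): q_1+q_2\le 0\}$, and then observe that a constant family of sets trivially converges in the Kuratowski sense. Since the spherical functions are $\SU_n$-invariant and, by the generalized Cartan decomposition (or an elementary direct argument), every $\SU_n$-orbit on $G/H=\{(\xx,\yy): \yy^t\xx=1\}$ has a representative of a convenient normal form, I would first reduce to computing $(\log_t(\|\xx\|^2), \log_t(\|\yy\|^2))$ on such representatives. Concretely, given $(\xx,\yy)$ with $\yy^t\xx=1$, one can act by $\SU_n$ on the left (which sends $\xx\mapsto A\xx$, $\yy\mapsto \overline{A}\yy$ since $\prescript{t}{}{A}^{-1}=\overline{A}$ for $A\in\SU_n$) to bring $\xx$ to a multiple of $\ee_1$; the constraint then forces $\yy$ to have a prescribed first coordinate, and a further rotation fixing $\ee_1$ arranges $\yy$ in the span of $\ee_1,\ee_2$. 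So up to $\SU_n$ every point is $(\xx,\yy)=(a\ee_1,\ \tfrac1a\ee_1 + b\ee_2)$ with $a\in\R_{>0}$, $b\in\C$; then $\|\xx\|^2=a^2$ and $\|\yy\|^2 = a^{-2}+|b|^2$.

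Next I would compute the image set. With $u:=a^2\in\R_{>0}$ and $s:=|b|^2\in\R_{\ge 0}$, the pair of spherical-function values is $(u,\ u^{-1}+s)$. As $(u,s)$ ranges over $\R_{>0}\times\R_{\ge 0}$, the first coordinate takes every value in $\R_{>0}$ and, for each fixed $u$, the second coordinate takes every value in $[u^{-1},\infty)$. Hence $\Phi_\Gamma(G/H) = \{(p_1,p_2)\in\R_{>0}^2 : p_1 p_2 \ge 1\}$. Applying $\log_t$ (with $0<t<1$, so $\log_t$ is order-reversing) turns the condition $p_1p_2\ge 1$ into $\log_t(p_1)+\log_t(p_2)\le 0$, and turns $\R_{>0}$ into all of $\R$ in each coordinate; so $\sLog_{\Gamma,t}(G/H) = \{(q_1,q_2)\in\R^2 : q_1+q_2\le 0\}$, which is exactly $\overline{\mathcal V}_{G/H}$ by the preceding lemma (and its $\R$-closure). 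This holds for every $t\in(0,1)$, so the family is literally constant.

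Finally, I would invoke Definition~\ref{def-Kuratowski}: a constant family $X_t\equiv X_0$ converges to $X_0$ in the sense of Kuratowski (condition (a) holds because $M\setminus X_0$ is open and disjoint from $X_0$; condition (b) is immediate). This gives the stated convergence to $\overline{\mathcal V}_{G/H}$ as $t\to 0$. The only place requiring genuine care is the $\SU_n$-orbit normal form in the first step — specifically checking that the residual stabilizer of $a\ee_1$ in $\SU_n$ (a copy of $\mathrm U(n-1)$ acting on the last $n-1$ coordinates, together with a compensating phase) acts transitively enough on the possible $\yy$'s to reduce to the two-parameter family $(u,s)$, and that no constraint beyond $p_1p_2\ge 1$ survives; I expect this to be the main (though still elementary) obstacle, and everything after it is routine.
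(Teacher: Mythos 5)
Your proof is correct, and it takes a genuinely different route from the paper's. The paper establishes the two inclusions separately: the inclusion $\sLog_{\Gamma, t}(G/H) \subseteq \overline{\mathcal{V}}_{G/H}$ follows directly from the Cauchy--Schwarz inequality applied to $1 = \lvert \sum_i x_i y_i \rvert \le \lVert \xx\rVert\,\lVert\yy\rVert$ (which, after taking $\log_t$ with $0<t<1$ reversing the inequality, gives $\tfrac12\log_t\phi_{\omega_{n-1}} + \tfrac12\log_t\phi_{\omega_1} \le 0$); for the reverse inclusion, given $(a,b)$ with $a+b\le 0$, the paper writes down an explicit preimage $\xx=(t^{a/2},0,\ldots,0)$, $\yy=(t^{-a/2},\sqrt{t^b-t^{-a}},0,\ldots,0)$. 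You instead reduce to an $\SU_n$-orbit normal form, which identifies the image of the raw map $\Phi_\Gamma = (\phi_{\omega_{n-1}},\phi_{\omega_1})$ as $\{(p_1,p_2)\in\R_{>0}^2 : p_1p_2 \ge 1\}$ and thus yields both inclusions simultaneously after applying $\log_t$. Your normal form $(a\ee_1,\,a^{-1}\ee_1+b\ee_2)$ is, in fact, essentially the same two-parameter family that the paper exhibits as preimages, just repackaged as a transversal for the $\SU_n$-action rather than as an ad hoc lifting; what you gain is a description of the image of $\Phi_\Gamma$ itself (not only of its logarithm), and what the paper's version gains is avoiding the orbit-reduction step entirely in the $\subseteq$ direction by invoking Cauchy--Schwarz.

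The step you flag as requiring care does go through without difficulty: after normalizing $\xx$ to $a\ee_1$ with $a=\lVert\xx\rVert>0$, the residual stabilizer in $\SU_n$ is $\{1\oplus B : B\in \SU_{n-1}\}$, which acts on $(y_2,\ldots,y_n)$ via $\overline{B}\in\SU_{n-1}$; since $n\ge 3$ so $n-1\ge 2$, this acts transitively on spheres in $\C^{n-1}$, and the reduction to $\yy=a^{-1}\ee_1 + b\ee_2$ with $b\in\R_{\ge 0}$ follows. (You wrote $b\in\C$, which also works since you only ever use $\lvert b\rvert^2$.) Your observation that the family is literally constant in $t\in(0,1)$ and hence trivially Kuratowski-convergent is exactly what the paper asserts as well.
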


\begin{proof} 
Let $(\xx,\yy) \in \SL_n(\C)/\SL_{n-1}(\C)$, i.e., $\sum_{i=1}^n x_i y_i = 1$. 
Let $t \in \R$ with $0 < t < 1$. Then $\mathrm{ln}(t) <0$ which implies that $\log_t(x) = \frac{\ln(x)}{\ln(t)}$ is a strictly decreasing function on $\R_{>0}$. Using this, we can make the following computation: 
  \begin{multline*}
    0 = \log_t |1| = \log_t |\sum_{i=1}^n x_i y_i | \ge \log_t \rleft( \sqrt{\sum_{i=1}^n |x_i|^2} \cdot \sqrt{\sum_{i=1}^n |y_i|^2} \rright) \\
    = \log_t\rleft( \sqrt{\sum_{i=1}^n |x_i|^2} \rright) + \log_t \rleft( \sqrt{\sum_{i=1}^n |y_i|^2} \rright) = \tfrac{1}{2} \log_t(\phi_{\omega_{n-1}}(\xx,\yy)) + \tfrac{1}{2} \log_t(\phi_{\omega_1}(\xx,\yy))
  \end{multline*}
  where the first inequality uses the standard Cauchy-Schwartz inequality together with the fact that $\log_t$ is a 
  decreasing function. It follows that $\sLog_{\Gamma, t}(\SL_n/\SL_{n-1}) \subseteq \mathcal{V}_{\SL_n/\SL_{n-1}}$.
It remains to show that $\mathcal{V}_{\SL_n/\SL_{n-1}} \subseteq \sLog_{\Gamma, t}(\SL_n(\C)/\SL_{n-1}(\C)))$. Let $(a,b) \in \mathcal{V}_{G/H}$, i.e., $(a,b) \in \R^2$ with $a+b \le 0$. Then $b \le -a$, and, as $x \mapsto t^x$ is a strictly decreasing function for $0 < t < 1$, we get that $t^b \ge t^{-a}$ or equivalently $t^b-t^{-a} \ge0$. Hence $y = \sqrt{t^b-t^{-a}} \in \R_{\ge0}$ is defined. We set $\xx = (t^{a/2},0,\ldots,0)$ and $\yy=(t^{-a/2},y,0,\ldots,0)$ and observe that $(\xx,\yy) \in \SL_n(\C)/\SL_{n-1}(\C)$. Finally, we may compute that
  \[
  \sLog_{\Gamma, t}(\xx,\yy) = \log_t(|t^{a/2}|^2,|t^{-a/2}|^2 + |\sqrt{t^b-t^{-a}}|^2) = (\log_t(t^a),\log_t(t^{-a}+t^b-t^{-a}))=(a,b) 
  \]
  so $(a,b) \in \sLog_{\Gamma, t}(\SL_n(\C)/\SL_{n-1}(\C))$ as desired. 
  \end{proof} 
  
\subsection{\texorpdfstring{The space of hyperbolic triangles}{The space (SL2 x SL2 x SL2)/SL2}}
\label{subsec: hyperbolic triangles}

In this section we consider the group $G = \SL_2(\C) \times \SL_2(\C) \times \SL_2(\C)$ 
with the diagonally embedded subgroup $H = \SL_2(\C)_{\diag} \subseteq G$. 
Let $B_G \subseteq G$ denote a fixed choice of Borel subgroup of $G$ given by $B_G = B^- \times B^- \times B^-$ where $B^-$ denotes the lower-triangular matrices in $\SL_2(\C)$. Let $T_G \subseteq B_G$ denote the maximal torus of $G$ given by triples of diagonal matrices in $\SL_2(\C)$. We denote by $\omega_i$ for $i = 1, 2, 3$, the pullback to $B_G$ of the fundamental weight of $\SL_2(\C)$ via the projection of $B_G = B^- \times B^- \times B^-$ to its $i$-th factor. 

To see that $G/H = \SL_2(\C) \times \SL_2(\C) \times \SL_2(\C)/\SL_2(\C)$ is spherical, it suffices to find an open dense $B$-orbit. To see this it would suffice to see that the double quotient $B \setminus G/H$ is finite. 
Since $B \setminus G = \mathbb{P}^1 \times \mathbb{P}^1 \times \mathbb{P}^1$ and $\SL_2(\C)$ acts transitively on triples of pairwise distinct points in $\mathbb{P}^1$, it follows that there are finitely many $\SL_2(\C)$-orbits in $B \setminus G$ and we are done.
Moreover, since $H = \SL_2(\C)$ is affine, $G/H$ is affine \cite[Theorem 3.8]{Timashev} and hence quasi-affine. Thus we are in the setting of Section~\ref{sec: limit}. 

Next we describe the colors of $G/H$. From the previous paragraph we can see that the complement of the open dense $B$-orbit consists of equivalence classes of triples of points where at least two of the points coincide. Below we describe these colors precisely in terms of the coordinates on the three copies of $\SL_2(\C)$. Specifically, we claim that the set $\mathcal{D}$ of colors of $G/H$, i.e. the set of $B_G$-invariant prime divisors of $G/H$, is exactly $\mathcal{D} = \{D_{12}, D_{13}, D_{23}\}$ where 
\[
D_{ij} := \{ \det(A_{ij}) = 0 \}/\SL_2(\C)_{\diag} \subseteq \SL_2(\C) \times \SL_2(\C) \times \SL_2(\C)/\SL_2(\C)_{\diag}
\]
and $A_{ij}$ is the $2 \times 2$ matrix obtained from a triple of matrices
\begin{equation}\label{eq: triple of matrices}
\left( \begin{bmatrix} x_{11} & x_{12} \\ x_{21} & x_{22} \end{bmatrix}, 
\begin{bmatrix} y_{11} & y_{12} \\ y_{21} & y_{22} \end{bmatrix}, 
\begin{bmatrix} z_{11} & z_{12} \\ z_{21} & z_{22} \end{bmatrix} \right) 
\end{equation}
by taking the top rows of the $i$-th matrix and the $j$-th matrix. Thus, for example, $A_{12} = \begin{bmatrix} x_{11} & x_{12} \\ y_{11} & y_{12} \end{bmatrix}$ so $\det(A_{12}) = x_{11} y_{12} - y_{11} x_{12}$. To see that $\mathcal{D}$ is the set of colors, it would suffice to show that $\det(A_{ij})$ is a $B_G$-semi-invariant for all $i, j$ (so its vanishing locus is $B_G$-invariant), that each $D_{ij}$ is prime, and that the union $D_{12} \cup D_{13} \cup D_{23}$ is precisely the complement of the open dense $B_G$-orbit of $\left( \begin{bmatrix} 1 & 0 \\ 0 & 1 \end{bmatrix}, \begin{bmatrix} 0 & -1 \\ 1 & 0 \end{bmatrix}, \begin{bmatrix} 1 & -1 \\ 0 & 1 \end{bmatrix} \right) \cdot \SL_2(\C)_{\diag}$. It is a simple computation to see that $\det(A_{ij})$ is a $B_G$-semi-invariant with associated weight $\Omega_{ij} = \omega_i + \omega_j$, so the first claim is proved. 
To see that the union of the $D_{ij}$ is the complement of the open $B_G$-orbit, note that any element in this $B_G$-orbit must be of the form 
$\left( \begin{bmatrix} a_1 & 0 \\ b_1& c_1 \end{bmatrix}, \begin{bmatrix} 0 & -a_2 \\ c_2 & -b_2 \end{bmatrix}, \begin{bmatrix} a_3 & -a_3 \\ b_3 & -b_3 + c_3\end{bmatrix} \right)$ for some $a_i, b_i, c_i \in \C$ with $a_i c_i = 1$ for all $i$, up to the action of $\SL_2(\C)_{\diag}$ on the right.  Thus to prove the claim it suffices to see that for any three pairwise linearly independent vectors in $\C^2$, there exists a basis with respect to which the coordinates of these vectors are of the form $(a_1, 0), (0, -a_2), (a_3, -a_3)$ for some non-zero $a_i$, but this is elementary linear algebra. 
Finally we must show that each $D_{ij}$ is prime. To show this, it would suffice to see that $D_{ij}$ contains an open dense $B_G$-orbit, since $B_G$ is irreducible. We sketch the argument for $D_{12}$ since the others are similar. Consider the $B_G$-orbit of the point 
$\left( \begin{bmatrix} 1 & 0 \\ 0 & 1 \end{bmatrix}, \begin{bmatrix} 1 & 0 \\ 0 & 1 \end{bmatrix}, \begin{bmatrix} 1 & -1 \\ 0 & 1 \end{bmatrix} \right) \cdot \SL_2(\C)_{\diag}$. This $B_G$-orbit consists of the triples of matrices in $\SL_2(\C)$ (modulo $\SL_2(\C)_{\diag}$) 
as in~\eqref{eq: triple of matrices} satisfying $x_{12}=y_{12}=0$ and $z_{11} = - z_{12}$. We claim that this $B_G$-orbit is equal to $D_{12} \cap D_{13}^c \cap D_{23}^c$, which is open in $D_{12}$, where $D_{ij}^c$ 
denotes the complement of $D_{ij}$. To see this it suffices to see that any triple~\eqref{eq: triple of matrices} such that $(x_{11}, x_{12}), (y_{11}, y_{12})$ are linearly dependent, and $(x_{11},x_{12}), (z_{11}, z_{12})$ are linearly independent, and $(y_{11}, y_{12}), (z_{11}, z_{12})$ are linearly independent, can be transformed by multiplication by the (same) element of $\SL_2(\C)$ to a triple satisfying the conditions $x_{12}=y_{12}=0$ and $z_{11} = - z_{12}$. This is elementary linear algebra. 
Thus $\mathcal{D}$ is the set of colors, as claimed. 

\begin{Lem}
The spherical homogeneous space $G/H$ is factorial, and the semigroup of highest weights $\Lambda^+_{G/H}$ of $\C[G/H]$ is generated by the weights of the colors $D_{12}, D_{13}, D_{13}$ in $\mathcal{D}$. 
\end{Lem}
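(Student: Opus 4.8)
Here is a plan for proving the lemma (with $D_{13}$ in the statement understood as $D_{23}$).

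\smallskip
\noindent\emph{Overall strategy.} I would deduce both assertions from the factoriality of $\C[G/H]$ together with the explicit description of the colors $\mathcal{D}=\{D_{12},D_{13},D_{23}\}$ and of the $B_G$-semi-invariants $\det(A_{ij})$ cutting them out, which we have already established above.

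\smallskip
\noindent\emph{Step 1: factoriality.} First I would argue that $\C[G/H]$ is a UFD. Since $G=\SL_2(\C)\times\SL_2(\C)\times\SL_2(\C)$ is semisimple and simply connected, we have $\mathcal{X}(G)=0$, $\operatorname{Pic}(G)=0$, and $\C[G]^{*}=\C^{*}$ (Rosenlicht). As $H=\SL_2(\C)_{\diag}$ is semisimple it has no nontrivial characters, so $\mathcal{X}(H)=0$. Feeding this into the standard exact sequence $\mathcal{X}(G)\to\mathcal{X}(H)\to\operatorname{Pic}(G/H)\to\operatorname{Pic}(G)$ (Popov), and using that the homogeneous space $G/H$ is smooth, so $\Cl(G/H)=\operatorname{Pic}(G/H)$, and affine, we get $\Cl(G/H)=0$, i.e. $\C[G/H]$ is a UFD. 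Pulling back units along $\pi\colon G\to G/H$ and using $\C[G]^{*}=\C^{*}$ shows in addition that $\C[G/H]^{*}=\C^{*}$. (Alternatively, one can give an ad hoc UFD argument by localizing at $\det(A_{12})$, in the spirit of Lemma~\ref{lemma: units in SLn/SLn-1}.)

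\smallskip
\noindent\emph{Step 2: each color is cut out by $\det(A_{ij})$.} By factoriality, each color $D_{ij}$ is principal, say $D_{ij}=\Div(f_{ij})$; since $D_{ij}$ is $B_G$-stable and the only units are constants, $f_{ij}$ is a $B_G$-semi-invariant, unique up to scalar. We already exhibited $\det(A_{ij})\in\C[G/H]$ as a $B_G$-semi-invariant of weight $\Omega_{ij}=\omega_i+\omega_j$ whose zero set equals $D_{ij}$, so $\Div(\det A_{ij})=k_{ij}D_{ij}$ for some integer $k_{ij}\ge 1$, whence $\det A_{ij}=c\,f_{ij}^{\,k_{ij}}$ with $c\in\C^{*}$ and therefore $\Omega_{ij}=k_{ij}\,\chi_{f_{ij}}$ in $\mathcal{X}(T_G)\cong\Z^{3}$. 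Since $\Omega_{ij}=\omega_i+\omega_j$ corresponds to the primitive vector $e_i+e_j\in\Z^{3}$, this forces $k_{ij}=1$; in particular $\det(A_{ij})$ is prime and is the equation of $D_{ij}$.

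\smallskip
\noindent\emph{Step 3: generation of the weight monoid.} Given $\lambda\in\Lambda^{+}_{G/H}$, pick a highest weight vector $f_\lambda\in\C[G/H]$; it is a $B_G$-semi-invariant of weight $\lambda$, hence its zero locus is $B_G$-stable and, $B_G$ being connected, each irreducible component of it is a $B_G$-invariant prime divisor, i.e. a color. Thus $\Div(f_\lambda)=\sum_{ij}m_{ij}D_{ij}=\Div\bigl(\prod_{ij}(\det A_{ij})^{m_{ij}}\bigr)$ with $m_{ij}\in\Z_{\ge0}$, so the ratio $f_\lambda/\prod_{ij}(\det A_{ij})^{m_{ij}}$ is a unit, hence a constant, and comparing weights gives $\lambda=\sum_{ij}m_{ij}\Omega_{ij}$. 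Hence $\Lambda^{+}_{G/H}$ is generated by $\Omega_{12},\Omega_{13},\Omega_{23}$ (and in fact freely, since these three weights are linearly independent in $\Lambda\otimes\Q$).

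\smallskip
\noindent The only step I expect to require genuine care is Step 1 — invoking the class-group computation $\Cl(G/H)\cong\mathcal{X}(H)$ for a simply connected semisimple $G$ in the correct form (or replacing it by a direct UFD argument); after that, Steps 2 and 3 are routine manipulations with divisors of $B$-semi-invariants once the colors and their equations are known.
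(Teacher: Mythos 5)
Your proof is correct, and it is worth noting two places where it differs from (and in one case strengthens) the paper's argument.

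In Step~1 you compute $\operatorname{Pic}(G/H)=0$ via the exact sequence $\mathcal{X}(G)\to\mathcal{X}(H)\to\operatorname{Pic}(G/H)\to\operatorname{Pic}(G)$ for a homogeneous space, using $\mathcal{X}(G)=\mathcal{X}(H)=0$ and $\operatorname{Pic}(G)=0$. The paper instead observes that $G/H$ is isomorphic \emph{as a variety} to $\SL_2(\C)\times\SL_2(\C)$ and quotes Popov's computation $\operatorname{Pic}(\SL_2\times\SL_2)=0$. Both routes land at $\Cl(G/H)=\operatorname{Pic}(G/H)=0$; yours avoids having to recognize the explicit isomorphism $G/H\cong\SL_2\times\SL_2$ and is slightly more self-contained, while the paper's is shorter given that identification.

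Your Step~2 is a genuine improvement in rigor: you explicitly check that $\det(A_{ij})$ is prime, i.e.\ that $\operatorname{div}(\det A_{ij})=D_{ij}$ with multiplicity one, via the primitivity of $\Omega_{ij}=e_i+e_j$ in $\Z^3$. The paper jumps directly to forming $f/A_{12}^a A_{13}^b A_{23}^c$ and asserting it is a unit, which tacitly assumes exactly this multiplicity-one statement (otherwise the quotient need not be regular). Step~3 is essentially identical in substance to the paper's argument (both reduce to the fact that the units in $\C[G/H]$ are constants, the paper via Rosenlicht applied to the pullback along $G\to G/H$, you via the unit computation in Step~1), and you correctly add the remark that the generation is free since the three $\Omega_{ij}$ are linearly independent.
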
 

\begin{proof} 
Being factorial means by definition that every Weil divisor is a principal divisor, so 
$G/H$ is factorial if and only if its class group is $0$ . Now by smoothness of $G/H$ this is in turn equivalent to showing that $\mathrm{Pic}(G/H) = 0$ \cite[Ch. II, Cor. 6.16]{Hartshorne}.
As an algebraic variety, notice that $G/H$ is isomorphic to $\SL_2(\C) \times \SL_2(\C)$.  From \cite[Proposition 1]{Popov} we know that the Picard group $\mathrm{Pic}(\SL_2(\C) \times \SL_2(\C))$ is equal to $0$, so it is factorial. Hence $G/H$ is also factorial, and this proves the claim. 
   
To show the second claim, suppose $f \in \C[G/H]^{(B)}$ is a $B$-semi-invariant function on $G/H$. Then $\div(f)$ is $B$-invariant and hence a non-negative linear combination of $D_{12}, D_{13}, D_{23}$. This means $h:= f/A_{12}^a A_{13}^b A_{23}^c$ for some $a,b,c \in \Z_{\geq 0}$ is a nowhere-vanishing regular function on $G/H$. Pulling back to $G$ via the canonical projection $\pi: G \to G/H$ we obtain a nowhere-vanishing regular function on $G$. Such a function must be a character of $G$ by a result of Rosenlicht \cite{Rosenlicht}, and for $G=\SL_2(\C) \times \SL_2(\C) \times \SL_2(\C)$ must hence be a constant. Hence $f = A_{12}^a A_{13}^b A_{23}^c$ up to a constant, so its $B$-weight is in the non-negative span of $\Omega_{12}, \Omega_{13}, \Omega_{23}$ as desired. 
\end{proof}

Our main goal in this section is to answer Question~\ref{question: sph-amoeba-approach-sph-trop}(2) from Section~\ref{sec: limit} in the case currently under consideration, $G = \SL_2(\C) \times \SL_2(\C) \times \SL_2(\C)$ and $H=\SL_2(\C)_{\diag}$. 
More precisely, we aim to explicitly compute the image of the spherical logarithm map on $G/H$ for different real parameters $t > 0$ and to prove that this image converges in the Kuratowski sense to the valuation cone $\mathcal{V}_{G/H}$. To accomplish this, it will be useful to view the spherical functions $\phi_\lambda$ as being defined on the double coset space $K \backslash G/H$, where $K$ is the maximal compact subgroup of $G$; this is valid since the spherical functions $\phi_\lambda$ are $K$-invariant. It will turn out that we can identify the space $K \backslash G/H$ with a space of hyperbolic triangles, as we now explain; this will then aid us in the computation.

We choose $K = \SU_2 \times \SU_2 \times \SU_2$ as our maximal compact subgroup of $G$.  
We first recall how to identify  $\SU_2 \backslash \SL_2(\C)$ with the hyperbolic $3$-space
\[
  \mathcal{H}^3 = \rleft\{ (z, r) \mid z \in \C, r \in \R_{\geq 0} \rright\} 
  \]
following the exposition of \cite{Hyperbolic}. 
Let $\mathbb{H} = \rleft\{ z+jr \mid z, r \in \C \rright\}$ be the algebra of quaternions, defined in the usual way. 
For $g = \begin{psmallmatrix} a & b \\ c & c \end{psmallmatrix} \in \SL_2(\C)$ and $P \in \mathbb{H}$, define an action of $\SL_2(\C)$ on $\mathbb{H}$ from the \textit{right} by
\begin{equation}  \label{equ-SL(2)-action}
  P^g = (dP - b)(a - cP)^{-1},
\end{equation}
where the operations are in the algebra of quaternions. This formula is analogous to the action of $\SL_2(\R)$ on the upper-half plane by M\"obius transformations. Note that since the quaternions are not commutative, the order of the multiplication in~\eqref{equ-SL(2)-action} is important. Now consider the subspace of the quaternions given by 
\[ 
  \mathcal{H}^3 := \{z+jr \mid z \in \C, r \in \R_{\geq 0}\} \subseteq \mathbb{H} 
\]
which we may identify with the hyperbolic $3$-space, via the identification $z+jr \mapsto (z,r)$. In this manner we will identify hyperbolic $3$-space with a subspace of $\mathbb{H}$. 

We now briefly recall some facts about $\mathcal{H}^3$, referring the reader to \cite{Hyperbolic} for details. The subspace $\mathcal{H}^3$ is invariant under the action of $\SL_2(\C)$ on $\mathbb{H}$. If we equip $\mathcal{H}^3$ with its standard hyperbolic metric $d \colon \mathcal{H}^3 \times \mathcal{H}^3 \to \R$, then $\SL_2(\C)$ acts by isometries on $\mathcal{H}^3$ and this gives an isomorphism between the group of orientation preserving isometries of $\mathcal{H}^3$ and $\mathrm{PSL}_2$ \cite[Proposition 1.3]{Hyperbolic}. Moreover, it is straightforward to check that the $\SL_2(\C)$-stabilizer of the point $0 + 1 \cdot j$ is $\SU_2$ \cite[Proposition 1.1]{Hyperbolic}. Finally, the group $\mathrm{PSL}_2$ acts doubly transitively on $\mathcal{H}^3$ in the following sense: for all $P,P',Q,Q' \in \mathcal{H}^3$ such that $d(P,P') = d(Q,Q')$ there exists $g \in \mathrm{PSL}_2$ such that $P^g = Q$ and $(P')^g = Q'$ \cite[Proposition 1.4]{Hyperbolic}. 

It follows that we can identify $\mathcal{H}^3$ with the space of cosets $\SU_2 \backslash \SL_2(\C)$, and the double coset space $K \backslash G/H$ corresponds to triples of points in the hyperbolic space modulo the action of orientation preserving isometries of $\mathcal{H}^3$. Thus we may identify $K \backslash G/H$ with \textbf{the space of hyperbolic triangles modulo the action of orientation preserving isometries of $\mathcal{H}^3$} (this is slight abuse of language, since we also allow extreme cases such as three collinear points).

Next we express the spherical functions $\phi_{\Omega_{ij}}$ in the language of hyperbolic triangles. First, to compute the spherical function $\phi_{\Omega_{12}}$, we must find a basis of the irreducible module $V_{12} = 
\rleft\langle G \cdot f_1 \rright\rangle$ where 
\[
  f_1 \coloneqq \det \begin{psmallmatrix} x_{11} & x_{12} \\ y_{11} & y_{12} \end{psmallmatrix}.
\]
A straightforward computation shows that the following regular functions form a basis of $V_{12}$: 
\[
  V_{12} = \C  \underbrace{\det \begin{psmallmatrix} x_{11} & x_{12} \\ y_{11} & y_{12} \end{psmallmatrix}}_{\eqqcolon f_1} \oplus \C  \underbrace{\det \begin{psmallmatrix} x_{11} & x_{12} \\ y_{21} & y_{22} \end{psmallmatrix}}_{\eqqcolon f_2} \oplus \C  \underbrace{\det \begin{psmallmatrix} x_{21} & x_{22} \\ y_{11} & y_{12} \end{psmallmatrix}}_{\eqqcolon f_3} \oplus \C  \underbrace{\det \begin{psmallmatrix} x_{21} & x_{22} \\ y_{21} & y_{22} \end{psmallmatrix}}_{\eqqcolon f_4} \text{.}
\]
Let $h$ be the Hermitian inner product on $V_{12}$ associated to this basis; it is not difficult to check that $h$ is invariant under the action of $\SU_2 \times \SU_2$. It follows that the spherical function $\phi_{\Omega_{12}}$ is given by:
\[
  \phi_{\Omega_{12}} =  \frac{1}{2} \left( |f_1|^2 + |f_2|^2 + |f_3|^2 + |f_4|^2 \right)\text{.}
\]
We can compute the other two spherical functions in a similar manner. 

Now we wish to express these spherical functions in terms of the lengths of segments of hyperbolic triangles. From \cite[Proposition 1.7]{Hyperbolic} we know the hyperbolic distance $d( \cdot, \cdot)$ on $\mathcal{H}^3$ can be expressed explicitly as follows. Define $\delta(j, j^g) := \cosh d(j, j^g)$.  Suppose $g = \begin{psmallmatrix} a & b \\ c & d \end{psmallmatrix} \in \SL_2(\C)$. Then
\[
  \delta(j, j^g) := \cosh d(j, j^g) = \tfrac{1}{2} ( |a|^2 + |b|^2 + |c|^2 + |d|^2) \text{.}
\]
Now let $[(A,B,C)] \in K \backslash G /H$ be a hyperbolic triangle. By the above, the $\cosh$ of the length of the segment connecting the points $j^A$ and $j^B$ is given by
\[
  \delta (j^A,j^B) = \delta\rleft( j, j^{BA^{-1}} \rright) = \tfrac{1}{2} \rleft(|a|^2 + |b|^2 + |c|^2 + |d|^2\rright) \text{,}
\]
where the first equal sign follows by the fact that $\SL_2(\C)$ acts by isometries on $\mathcal{H}^3$ and where $BA^{-1} = \begin{psmallmatrix} a & b\\ c & d\end{psmallmatrix}$. On the other hand, we have
\begin{align*}
  \phi_{\Omega_{12}} (A,B,C) &= \phi_{\Omega_{12}} (I_2, BA^{-1},CA^{-1}) \\
  &= \frac{1}{2} \left( \rleft| \det \begin{psmallmatrix} 1 & 0\\ a & b\end{psmallmatrix}\rright|^2 + \rleft| \det \begin{psmallmatrix} 1 & 0\\ c & d\end{psmallmatrix}\rright|^2 + \rleft| \det \begin{psmallmatrix} 0 & 1\\ a & b\end{psmallmatrix}\rright|^2 + \rleft| \det \begin{psmallmatrix} 0 & 1\\ c & d\end{psmallmatrix}\rright|^2 \right)\\
  & = \tfrac{1}{2} \rleft(|a|^2 + |b|^2 + |c|^2 + |d|^2\rright) \text{.}
\end{align*}
Hence, we conclude 
\[
\phi_{\Omega_{12}}(A,B,C) = \delta(j^A, j^B) = \cosh d(j^A, j^B) 
\]
or in other words, the spherical function $\phi_{\Omega_{12}}$ coincides with the $\cosh$ of the length of the segment connecting the points $j^A$ and $j^B$. Similarly, we can see that the other spherical functions coincide with the $\cosh$ of the side lengths of the remaining segments of the hyperbolic triangle.

We now compute the image of $K \backslash G/H$ under the spherical logarithm map. From the above discussion, it follows that this image coincides with the image under the componentwise logarithm map
of the image of the following function, defined on the space of (possibly degenerate) hyperbolic triangles, which by slight abuse of notation we also denote by $\delta$: 
\[
 \delta \, \colon \,   \{ \text{hyperbolic triangle with side lengths} \; a,b,c\} \to \R^3; (a,b,c) \mapsto (\cosh(a), \cosh(b), \cosh(c))\text{.}
\]
We first compute the image of $\delta$. Recall that there exists a hyperbolic triangle with side lengths $a,b,c$ if and only if the sum of two side lengths is greater than or equal to the third side length. This can be equivalently reformulated as: $|b-c|\le a \le b+c$. Using the facts that $\cosh(x)$ is strictly increasing on $[0,\infty)$ and is an even function, it is straightforward to verify that this is equivalent to
\[
  (\cosh(a) - \cosh(b-c)) \cdot (\cosh(b+c) - \cosh(a))\ge0
\]
By using the addition theorem of $\cosh(x)$ and the equation $(\sinh(x))^2=(\cosh(x))^2-1$, this can be reformulated as follows: 
\[
  1 + 2 \cosh(a)\cosh(b)\cosh(c)-(\cosh(a))^2-(\cosh(b))^2-(\cosh(c))^2\ge0\text{.}
\]
Finally, we recall that the image of $f(x) = \cosh(x)$ is $[1,\infty)$. Putting these observations together yields the following result.

\begin{Prop}\label{prop:image-hyper}
  The image of the spherical logarithm map $\sLog_t \colon G/H \to \R^3$ is equal to 
  \[
    \rleft\{ \rleft( \log_t(x), \log_t(y), \log_t(z) \rright) : 1+2xyz-x^2-y^2-z^2\ge0 \textup{ and } x \geq 1, y \geq 1, z \geq 1 \rright\}
      \]

\end{Prop}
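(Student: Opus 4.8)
The plan is to reduce the statement to a purely hyperbolic–geometric fact about realizable side lengths of triangles, using the identifications set up above. Since the spherical functions $\phi_{\Omega_{12}},\phi_{\Omega_{13}},\phi_{\Omega_{23}}$ are $K$-invariant, the map $\sLog_t$ factors through $K\backslash G/H$, and by the identification of $K\backslash G/H$ with the space of (possibly degenerate) hyperbolic triangles modulo orientation-preserving isometries, together with the formula $\phi_{\Omega_{12}}(A,B,C)=\cosh d(j^A,j^B)$ and its analogues, the image $\sLog_t(G/H)$ equals the image under $(x,y,z)\mapsto(\log_t x,\log_t y,\log_t z)$ of the set of all triples $(\cosh a,\cosh b,\cosh c)$, where $a,b,c$ run over side lengths of hyperbolic triangles. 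So it suffices to show that this last set of triples equals $\{(x,y,z):x,y,z\ge 1,\ 1+2xyz-x^2-y^2-z^2\ge 0\}$.

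For the inclusion ``$\subseteq$'' I would argue as follows. If $a,b,c\ge 0$ are the side lengths of a hyperbolic triangle, then $\cosh a,\cosh b,\cosh c\in[1,\infty)$, and the triangle inequalities $|b-c|\le a\le b+c$ hold. Using that $\cosh$ is even and strictly increasing on $[0,\infty)$, these are equivalent to $\cosh(b-c)\le\cosh a\le\cosh(b+c)$, hence to $(\cosh a-\cosh(b-c))(\cosh(b+c)-\cosh a)\ge 0$; expanding via the addition formula for $\cosh$ and the identity $\cosh^2-\sinh^2=1$ (as in the paragraph preceding the statement) rewrites this as $1+2\cosh a\cosh b\cosh c-\cosh^2 a-\cosh^2 b-\cosh^2 c\ge 0$. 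This places $(\cosh a,\cosh b,\cosh c)$ in the right-hand set.

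For ``$\supseteq$'', given $(x,y,z)$ with $x,y,z\ge 1$ and $1+2xyz-x^2-y^2-z^2\ge 0$, I would set $a,b,c$ to be the unique non-negative reals with $\cosh a=x$, $\cosh b=y$, $\cosh c=z$ (available since $\cosh\colon[0,\infty)\to[1,\infty)$ is a bijection), and run the equivalences of the previous paragraph backwards to obtain $|b-c|\le a\le b+c$. Then I would invoke the classical fact that any triple of non-negative reals obeying these triangle inequalities is realized by a (possibly degenerate) hyperbolic triangle in $\mathcal{H}^3$ --- concretely, pick two points at distance $c$ and intersect the metric spheres of radii $a$ and $b$ about them, which meet precisely because of the triangle inequalities. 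Lifting the resulting triangle to a class in $K\backslash G/H$ and then to a point of $G/H$ gives a point whose image under $\sLog_t$ is $(\log_t x,\log_t y,\log_t z)$, as desired.

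The step I expect to be most delicate is verifying that the three forms of the triangle inequality are genuinely \emph{equivalent} (not merely that one implies another), so that the argument runs in both directions; this hinges on monotonicity and evenness of $\cosh$ in the first step and on careful algebra with the hyperbolic identities in the second. A secondary point requiring explicit care is the realizability statement used in the ``$\supseteq$'' direction, in particular the degenerate cases of collinear triples, which correspond exactly to equality in one of the triangle inequalities and hence to points on the boundary $1+2xyz-x^2-y^2-z^2=0$ of the region.
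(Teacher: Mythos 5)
Your proposal is correct and follows essentially the same route as the paper: identify $K\backslash G/H$ with hyperbolic triangles, use $\phi_{\Omega_{ij}}=\cosh d(j^{(\cdot)},j^{(\cdot)})$ to reduce the image of $\sLog_t$ to the image of $(a,b,c)\mapsto(\cosh a,\cosh b,\cosh c)$ over valid side-length triples, and then rewrite the triangle inequalities via monotonicity/evenness of $\cosh$, the addition theorem, and $\cosh^2-\sinh^2=1$ as the stated polynomial inequality together with $x,y,z\ge 1$. You are slightly more explicit than the paper in the ``$\supseteq$'' direction, where you spell out the realizability of a triangle with prescribed side lengths and note that the boundary $1+2xyz-x^2-y^2-z^2=0$ corresponds to degenerate (collinear) triples; the paper leaves this implicit.
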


We must also compute the valuation cone $\mathcal{V}$. We use the language of minimal parabolics $P_\alpha$ which \emph{move} a color. More precisely, following \cite[Section 3.10]{Timashev} we say that a minimal parabolic $P_\alpha$ moves a color $D$ if $P_\alpha \cdot D \neq D$. In our case of $G = \SL_2(\C) \times \SL_2(\C) \times \SL_2(\C)$ 
with the diagonally embedded subgroup $H = \SL_2(\C)_{\diag} \subseteq G$, let us denote the simple roots (induced by our choice of $B_G$ and $T_G$ above) of the three factors by $\alpha$, $\beta$ and $\gamma$ respectively. In order to find the spherical roots, following \cite[Definition 30.21]{Timashev} and the surrounding discussion in \cite[Section 3.10]{Timashev} we compute how the minimal parabolics $P_\alpha, P_\beta, P_\gamma$ move the colors $D_{12}, D_{13}, D_{23}$. Here $P_\alpha$ is the minimal parabolic containing $B_G$ and the corresponding $\alpha$, and similarly for the othrs. In fact it is straightforward to show that $P_\alpha$, moves the two colors $D_{12}$ and $D_{13}$. Similarly, we can compute that the minimal parabolic $P_\beta$ moves the colors $D_{12}$, $D_{23}$ and the minimal parabolic $P_\gamma$ moves the colors $D_{13}$, $D_{23}$. Thus the roots $\alpha, \beta$ and $\gamma$ are spherical 
(in the language of \cite{Timashev}, all colors are of type $a$) and therefore we obtain 
\[
  \mathcal{V} = \{ v \in \mathcal{Q}_{G/H} : \langle v, \alpha \rangle \le 0, \langle v,\beta\rangle\le0, \langle v,\gamma\rangle\le0\}\text{.}
\]
If we identify the weight lattice $\Lambda$ with $\Z^3$ via the basis $\Omega_{12}, \Omega_{13}, \Omega_{23}$, then with respect to the corresponding dual basis, the valuation cone has primitive ray generators:
\[
  \begin{psmallmatrix}-1\\-1\\\hphantom{-}0\end{psmallmatrix}, \begin{psmallmatrix}-1\\\hphantom{-}0\\-1\end{psmallmatrix}, \begin{psmallmatrix}\hphantom{-}0\\-1\\-1\end{psmallmatrix}\text{.}
\]
Hence the valuation cone $\mathcal{V}$ is in this case entirely contained in the negative orthant, generated as a cone by the non-negative linear combinations of the above three generators. 

We now claim that the limit of the images of $\sLog_t$, in this case, is equal to the valuation cone $\mathcal{V}$. 

First note that as $t \to 0$ the image of the spherical logarithm map will be entirely contained in the negative orthant $\R^3_{\leq 0}$ due to the condition that $x, y, z \geq 1$ in Proposition~\ref{prop:image-hyper}. Second, recall that by Corollary~\ref{corollary Kuratowski} we already know that the Kuratowski limit of the images $\sLog_t(G/H)$ must contain the valuation cone. To show that the limit is equal to the valuation cone, we must therefore show that any point in the Kuratowski limit must be contained in the valuation cone. Indeed, considering the inequality in Proposition~\ref{prop:image-hyper}, it is not hard to see that if $(a,b,c) \in \R^3_{<0}$ is in the Kuratowski limit then we must have that 
\[
1+2t^{a+b+c} - t^{2a} - t^{2b} - t^{2c} \geq 0
\]
for $t$ small enough. From this it follows that 
$\lvert a \rvert - \lvert b \rvert + \lvert c \rvert \geq 0$ and $\lvert a \rvert + \lvert b \rvert - \lvert c \rvert \geq 0$ and  $ - \lvert a \rvert + \lvert b \rvert + \lvert c \rvert \geq 0$, which means that the $(a,b,c)$ lies in the valuation cone. Hence the Kuratowski limit coincides with the valuation cone, as desired.


\end{document}